\NeedsTeXFormat{LaTeX2e}
\documentclass[reqno,11pt,a4paper]{amsart}
\usepackage[en-GB]{datetime2}
\usepackage{graphicx}
\usepackage{amssymb}
\usepackage{caption}
\usepackage{enumitem}
\setlist[itemize]{leftmargin=2.5em}
\usepackage{comment}
\usepackage{thmtools}
\usepackage{setspace}
\setstretch{1.2}
\usepackage[T1]{fontenc}
\usepackage[charter,cal=cmcal]{mathdesign}	
\usepackage[nomath]{stix2}				
\renewcommand\sc{\fontfamily{stix2}\fontshape{sc}\selectfont}
\usepackage{etoolbox}
\makeatletter
\patchcmd{\section}{\scshape}{\sc}{}{}			
\patchcmd{\caption}{\scshape}{\sc}{}{}			
\patchcmd{\abstract}{\scshape}{\sc}{}{}			
\patchcmd{\@setaddresses}{\scshape}{\sc}{}{}			
\makeatother

\usepackage{tikz}
\usetikzlibrary{matrix,intersections,calc}
\tikzset{ 
	table/.style={
		matrix of nodes,
		nodes={rectangle,text width=1.75em,align=center},
		text depth=1.25ex,
		text height=2.5ex,
		nodes in empty cells
	}
}
\usepackage{mathtools}
\usepackage{amsfonts}
\usepackage{cite} 
\usepackage[sort]{natbib}

\usepackage[pdftex
,pagebackref
]{hyperref}
\hypersetup{
	colorlinks=true,
	allcolors=blue,
}
\usepackage[capitalize]{cleveref}
\usepackage{geometry}
\geometry{margin=1in}
\newtheorem{theorem}{Theorem}[section]
\newtheorem{lemma}[theorem]{Lemma}
\newtheorem{claim}{Claim}[theorem]
\Crefname{claim}{Claim}{Claims}

\Crefname{subsection}{Subsection}{Subsections}
\newtheorem{conjecture}[theorem]{Conjecture}
\Crefname{conjecture}{Conjecture}{Conjectures}
\newtheorem{problem}[theorem]{Problem}
\Crefname{problem}{Problem}{Problems}
\newtheorem{proposition}[theorem]{Proposition}

\allowdisplaybreaks

\expandafter\let\expandafter\oldproof\csname\string\proof\endcsname
\let\oldendproof\endproof
\renewenvironment{proof}[1][\proofname]{%
	\oldproof[\normalfont\bfseries #1]%
}{\oldendproof}
\newenvironment{subproof}[1][\normalfont\it Subproof]{%
	\begin{proof}[#1]%
	}{%
	\end{proof}%
}
\newcommand{\mac}{\mathcal}
\newcommand{\mab}{\mathbb}

\newcommand{\eps}{\varepsilon}
\newcommand{\nin}{\notin}
\renewcommand{\subset}{\subseteq}

\newcommand{\erh}{Erd\H{o}s--Hajnal}
\newcommand{\gas}{Gy\'{a}rf\'{a}s--Sumner}
\newcommand{\chis}{\chi^*}
\DeclarePairedDelimiter\abs{\lvert}{\rvert}%
\DeclarePairedDelimiter\ceil{\lceil}{\rceil}%
\DeclarePairedDelimiter\floor{\lfloor}{\rfloor}%
\makeatletter
\newcommand{\leqnomode}{\tagsleft@true}
\newcommand{\reqnomode}{\tagsleft@false}
\makeatother

\AtBeginDocument{
	\addtocontents{toc}{\Small}
}
\begin{document}
	\title{
		Polynomial $\chi$-boundedness for excluding $P_5$ 
	}
	\author{Tung H. Nguyen}
	\address{Mathematical Institute and Christ Church, University of Oxford, Oxford, UK}
	\email{\href{mailto:nguyent@maths.ox.ac.uk}{nguyent@maths.ox.ac.uk}
	}
	\thanks{Part of this work was conducted while the author was at Princeton University and was
		supported by AFOSR grant FA9550-22-1-0234, NSF grant DMS-2154169, and a Porter Ogden Jacobus Fellowship.
		Currently supported by a Titchmarsh Research Fellowship and a Christ Church Research Centre Grant.}
	\subjclass{05C15, 05C35, 05C55, 05C69, 05C75, 05C85, 68R10}
	\begin{abstract}
		Resolving a 1985 open problem of Gy\'arf\'as, we prove that chromatic number is polynomially bounded by clique number for graphs with no induced five-vertex path $P_5$. Our approach introduces a chromatic density framework involving chromatic quasirandomness and chromatic density increment, which allows us to deduce the desired statement from the Erd\H os--Hajnal result for $P_5$.
	\end{abstract}
	
	\maketitle
	\tableofcontents
	\clearpage
	
	\section{Introduction}
	\subsection{History and the main result}
	\label{subsec:main}
	All graphs in this paper are finite and simple. 
	For a graph~$G$, its {\em chromatic number} $\chi(G)$ is the least integer $k\ge0$ for which one can colour its vertices with $k$ colours such that every two adjacent vertices get different colours,
	and its {\em clique number} $\omega(G)$ is the maximum number of pairwise adjacent vertices in $G$. Clearly $\chi(G)\ge\omega(G)$ always. The converse is far from the truth, however, because there are various classical constructions of triangle-free graphs with arbitrarily large chromatic number~\cite{des47,des54,MR69494,MR35428,MR102081}.
	A fundamental challenge in graph theory is to understand the local properties of such graphs, and more generally, of graphs with chromatic number much larger than clique number.
	
	To formalise this idea, we say that a graph class $\mac G$ is {\em $\chi$-bounded} if there exists a function $f\colon\mab N\to\mab R_{\ge0}$ satisfying $\chi(G)\le f(\omega(G))$ for every $G\in\mac G$; and such an $f$ is called a {\em $\chi$-binding} function for~$\mac G$.
	Introduced by Gy\'arf\'as~\cite{MR951359} in a seminal paper from~1985 to generalise the celebrated notion of perfect graphs (which are $\chi$-bounded with the identity $\chi$-binding~function), the concept of $\chi$-boundedness encodes algorithmic, extremal, and structural aspects of graph theory; see~\cite{scott2022,MR4174126} for surveys. The area of $\chi$-boundedness has been shown to be connected to a number of disciplines such as geometry~\cite{MR3171778}, topology~\cite{MR4039603,MR514625}, number theory~\cite{MR963118,MR939574}, probability~\cite{MR4841090}, combinatorial optimisation~\cite{MR4232071}, coding theory~\cite{MR2815830}, approximation algorithms~\cite{MR4708895}, and quantum information theory~\cite{prx,prl14}.

	In $\chi$-boundedness, local properties are usually studied via the notion of induced subgraph.
	Here, a graph $H$ is an {\em induced subgraph} of $G$ if it can be obtained from $G$ by removing vertices; and we say that $G$ is {\em $H$-free} if it has no induced subgraph isomorphic to $H$.
	In what follows, a {\em forest} is a graph with no induced cycle.
	The following conjecture made independently by Gy\'arf\'as~\cite{MR382051} and Sumner~\cite{MR634555} is a remarkable instance of local-global phenomena in graph theory, and is arguably the foremost open problem of $\chi$-boundedness:
	
	\begin{conjecture}
		[\gas]
		\label{conj:gs}
		For every forest $T$, the class of $T$-free graphs is $\chi$-bounded.
	\end{conjecture}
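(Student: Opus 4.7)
My plan is to prove \Cref{conj:gs} by strong induction on $|V(T)|$. The base case $|V(T)|\le 2$ is immediate, and the forest case reduces to the tree case by handling each component separately and combining via a Ramsey argument, so I may assume $T$ is a tree on at least three vertices. I would root $T$ at a leaf $r$ with unique neighbor $r'$, and set $T' := T - r$. The goal would then be to show that any $T$-free graph $G$ with $\omega(G)=k$ contains a vertex $v$ with $\chi(G[N(v)]) \ge \chi(G)/c$ for some constant $c=c(T)$, and moreover such that $G[N(v)]$ is $T^*$-free for some smaller tree $T^*$ derived from $T$; induction on $T^*$ then gives $\chi(G) \le c\cdot f_{T^*}(k-1) + 1$.

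The archetype is the Gy\'arf\'as path argument for $T=P_k$: starting from a longest induced path, BFS layering from one endpoint partitions $V(G)$ into classes inducing $P_{k-1}$-free subgraphs, so pigeonhole plus induction suffices. For caterpillars and trees of radius~$2$ (Kierstead--Penrice), a variant of this template works. I would attempt a general version that processes $T$ from a leaf inward, at each step carving out an induced subgraph of large chromatic number in which a proper subtree of $T$ is forbidden.

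The main obstacle is that this template breaks down once $T$ has radius $\ge 3$: $T$-freeness of $G$ does not imply $T'$-freeness of $N(v)$ for an arbitrarily chosen high-chromatic vertex $v$, because a copy of $T$ in $G$ may wind through $v$ in complicated ways. To get around this, I would borrow the three-ingredient outline of the present paper and try to extend it from $P_5$ to arbitrary trees: combine a neighborhood-capture analogue of R\"odl's theorem with a decomposition that either grows an anticomplete pair of large chromatic number or exposes a chromatic-dense induced subgraph, and close the induction via an \erh{}-style density increment, using the \erh{} property of $T$ (itself still conjectural for general trees) as a black box. The hardest step would be the first: no general neighborhood-capture theorem is known beyond the Gy\'arf\'as-path template, and producing one appears to require fundamentally new structural insights into $T$-free graphs.
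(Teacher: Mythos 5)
The statement you are trying to prove is \cref{conj:gs} itself, which the paper does not prove and which remains open: the paper establishes only the special case $T=P_5$ (in the stronger polynomial form, \cref{thm:p5}). So there is no proof in the paper to match, and your proposal does not close the gap either. What you have written is a research programme, not a proof, and you say so yourself at two decisive points. First, the inductive template (root $T$ at a leaf, capture a linear-chromatic neighbourhood that forbids a smaller tree) is only known to work for paths, brooms, caterpillars, and radius-two trees; for general trees of radius at least three, $T$-freeness of $G$ gives no control over which trees are forbidden in $G[N(v)]$, and no ``neighbourhood-capture'' lemma of the kind you need is known. Naming the obstacle is not the same as overcoming it.

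Second, your fallback --- transplanting the paper's three-ingredient strategy from $P_5$ to an arbitrary tree $T$ --- is conditional on the \erh{} property of $T$, which for general trees is an instance of the open \cref{conj:eh}; invoking it as a black box makes the argument circular in the sense that it rests on an unproved conjecture of comparable difficulty. Moreover, each of the paper's three ingredients is genuinely $P_5$-specific: the chromatic R\"odl analogue (\cref{lem:chirdl}) is proved by the Gy\'arf\'as-path argument together with repeated induced-$P_5$ chases; the decomposition of \cref{sec:highchi} relies on \cref{lem:mixed}, which fails for longer paths and for general trees; and the density-increment rounds again chase five-vertex paths explicitly. None of these steps is known to generalise, so even modulo \erh{} for $T$ your outline does not yield a proof of \cref{conj:gs}, nor even of the $\chi$-boundedness of $T$-free graphs for a single new tree $T$.
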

	
	This conjecture remains largely open despite intensive effort for five decades. If true, it would be an `if and only if' characterisation of forests; indeed, the conjecture would be false if the forbidden graph $T$ has a cycle, because Erd\H os~\cite{MR102081} constructed high-chromatic graphs with no short cycle.
	
	A central focus in $\chi$-boundedness concerns the growth rate of the optimal $\chi$-binding function of $\chi$-bounded classes.
	Here, for such a class $\mac G$, its {\em optimal} $\chi$-binding function is the function $f_{\mac G}\colon\mab N\to\mab N$ defined by $f_{\mac G}(n):=\max(\chi(G):G\in\mac G,\,\omega(G)=n)$ for all $n\in\mab N$.
	We say that $\mac G$ is {\em polynomially $\chi$-bounded} if $f_{\mac G}$ has polynomial growth, or equivalently if $\mac G$ admits a polynomial $\chi$-binding function.
	Bria\'nski, Davies, and Walczak~\cite{MR4707561} constructed, for each $f\colon\mab N\mapsto\mab N$ with $f(n)\ge{3n+1\choose3}$ for all $n\in\mab N$, a $\chi$-bounded class $\mac G$ with $f_{\mac G}=f$ (see~\cite{MR4707564,MR4484828} for related results), thereby disproving in a very strong form a conjecture of Esperet~\cite{esperet} that $\chi$-boundedness implies its polynomial strengthening.
	Hence, there exist $\chi$-bounded classes that are~far from being polynomially $\chi$-bounded, which show that the latter property indicates exceptionally good control over chromatic number.
	Thus, perhaps many known $\chi$-bounded classes are not polynomially $\chi$-bounded; and it may not be surprising that the proofs of most $\chi$-boundedness results so far do not yield their polynomial refinements in turn.
	These include the majority of confirmed special cases of the \gas{} conjecture~\ref{conj:gs}, which have only been verified with super-exponential $\chi$-binding functions~\cite{MR1437291,MR1258244,MR2085214,MR4014345,MR4009302}.
	Nonetheless, the constructions from~\cite{MR4707561} contain arbitrarily large induced trees; and so the following tremendous strengthening of \cref{conj:gs} could still be true:
	
	\begin{conjecture}
		[Polynomial \gas]
		\label{conj:pgs}
		For every forest $T$, the class of $T$-free graphs is polynomially $\chi$-bounded.
	\end{conjecture}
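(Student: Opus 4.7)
The plan is to first reduce \cref{conj:pgs} to the case where $T$ is a tree: a forest is a disjoint union of trees, and a graph is $F$-free exactly when each of its connected components is $T$-free for every tree component $T$ of $F$, so the tree components can be handled separately and their polynomial binding functions combined. Then I would induct on the structure of $T$, say on the number of leaves or by a caterpillar-to-general-tree hierarchy. The base cases of small trees are trivial or known: stars yield bounded maximum degree which forces $\chi=O(\omega)$; the path $P_4$ gives cographs where $\chi=\omega$; and the real difficulty begins at $P_5$ and branching trees.

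As a prototype, the paper's $P_5$ strategy is a natural template, and I would try to generalize its three ingredients in order. First, a R\"odl-type theorem for $T$-free graphs, asserting that every $T$-free graph of large chromatic number contains a polynomially chromatic and polynomially sparse (or dense) induced subgraph; the Gy\'arf\'as path argument should extend to longer paths, and perhaps to trees of bounded radius via iterative neighborhood expansions. Second, a decomposition argument that either produces a high-chromatic anticomplete pair of induced subgraphs (on which one inducts using additivity of $\chi$ across such pairs) or a polynomially chromatic-dense induced subgraph that is more tractable. Third, a chromatic density-increment step that leverages the \erh{} property of $T$ to extract a substantially denser chromatic subgraph and iterate until $\omega$ is forced to grow.

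The main obstacle, almost certainly, is the third step. For $P_5$ the \erh{} conjecture is known (by Bousquet--Lagoutte--Thomass\'e), so the density-increment step can invoke it as a black box; but for most other trees $T$ the \erh{} conjecture is wide open and of comparable difficulty to \cref{conj:pgs} itself. Any proof of the full conjecture would therefore likely need either to prove \erh{} in tandem, or to devise a density-increment mechanism that bypasses \erh{} entirely --- perhaps by exploiting the forbidden forest $T$ in a more directly combinatorial way than the polynomial biclique/antibiclique extraction provided by \erh{}. A secondary, more technical obstacle is the decomposition: the $P_5$-free decomposition exploits that $P_5$ is extremely short, so non-edges are very locally constrained; for $P_k$ with $k\ge 6$ and for genuinely branching trees, the analogous structural control is not available, and new decomposition theorems---likely weaker and approximate, producing only polynomially (not absolutely) anticomplete pairs---would need to be developed.
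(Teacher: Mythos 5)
This statement is \cref{conj:pgs} itself, which the paper does not prove: it is stated as an open conjecture, and the paper's contribution (\cref{thm:p5}) is only the special case $T=P_5$. Your proposal is likewise not a proof but a research programme, and you concede the decisive gaps yourself: the chromatic density increment step uses the \erh{} property of $T$ as a black box, which is open for almost all trees and is itself implied by the polynomial $\chi$-bound you are trying to prove (via \eqref{eq:pchieh}); and the decomposition step relies on \cref{lem:mixed}-type control of mixed vertices that is specific to $P_5$ and has no known analogue for $P_6$ or branching trees. Acknowledging an obstacle does not discharge it, so the argument does not establish the conjecture, and indeed cannot be compared with a proof in the paper because none exists.

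Two concrete errors in the parts you do argue. First, your reduction from forests to trees is false as stated: it is not true that $G$ is $F$-free exactly when every component of $G$ is $T$-free for each tree component $T$ of $F$ (take $F=2K_2$ and $G=K_2$: $G$ is $F$-free, yet its unique component contains $K_2$). An induced copy of a forest may be spread over several components, and passing from connected $T$ to disjoint unions genuinely requires extra work; the paper itself invokes an additional theorem of~\cite{2024hchi} to handle disjoint unions of brooms in its final remarks. Second, the \erh{} property of $P_5$ is not due to Bousquet--Lagoutte--Thomass\'e (their result concerns graphs excluding a path \emph{and} its complement); it was proved only recently in~\cite{density7}, which is precisely why the present paper's strategy for $P_5$ became available and why it does not extend to other trees.
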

	
	(The case when $T$ is a path was first asked in a paper of Esperet, Lemoine, Maffray, and Morel~\cite{MR3010736}.)

	For every integer $t\ge1$, let $P_t$ be the $t$-vertex path.
	Our focus in this paper is \cref{conj:pgs} for $T=P_5$, the smallest open case of this conjecture until now (see~\cite[Chapter 13]{2025thes} and~\cite{MR4472775} for previously confirmed cases).
	This special case dates back to Gy\'arf\'as's 1985 paper introducing $\chi$-boundedness~\cite[Problem 2.7]{MR951359} where he obtained the first $\chi$-binding function $x\mapsto 4^{x-1}$ for $P_5$-free graphs and asked to determine the order of magnitude\footnote{Andr\'as Gy\'arf\'as (private communication) confirmed with us that in~\cite[Problem 2.7]{MR951359}, `order of magnitude' means whether the optimal $\chi$-binding function for $P_5$-free graphs is polynomial/quasi-polynomial/exponential\dots.} of their optimal $\chi$-binding function.
	Since then, the polynomial $\chi$-boundedness problem for excluding $P_5$ has attracted a great deal of unsuccessful attention, and has been highlighted by various groups of authors (see~\cite{MR4951164,MR4648583,MR4840998,MR3898374,MR4672178,scott2022,MR4708898,MR4332745,geisser,MR3010736,MR4174126,MR5025835,MR4033105,MR1361383,MR4174128} and the references therein) as the outstanding test case for \cref{conj:pgs} and a major open problem in $\chi$-boundedness.
	It is also highly relevant from an algorithmic perspective (at least with respect to \cref{conj:pgs} for excluding an arbitrary induced path), because the class of $P_5$-free graphs represents the critical~tractability boundary for colouring graphs with a forbidden induced path.
	To explain, we consider the {\sc $k$-colourability} problem for $k\in\mab N$, which decides whether an input graph $G$ has chromatic number at most $k$. If $k$ is part of the input, then this can be done in polynomial time when $G$ is $P_4$-free, but becomes {\sf NP}-complete when $G$ is $P_5$-free~\cite{MR1905637}; and at the other extreme, for every fixed $k$, the running time is polynomial if $G$ is $P_5$-free~\cite{MR2581077}, but returns to {\sf NP}-complete already with $k=5$ colours if $G$ is $P_6$-free~\cite{MR3398861}. 
	
	Over the past 40 years, there have been two main lines of research towards proving that $P_5$-free graphs are polynomially $\chi$-bounded. The first line attempts to provide successively stronger $\chi$-binding functions for these graphs:
	\begin{itemize}
		\item $x\mapsto 4^{x-1}$, by Gy\'arf\'as~\cite{MR951359} (more generally, $x\mapsto (t-1)^{x-1}$ for $P_t$-free graphs for each $t\ge2$);
		
		\item $x\mapsto 3^{x-1}$, by Gravier, Ho\`ang, and Maffray~\cite{MR2009549} (more generally, $x\mapsto (t-2)^{x-1}$ for $P_t$-free graphs for each $t\ge2$);
		
		\item $x\mapsto \max(3,5\cdot3^{x-3})$, by Esperet, Lemoine, Maffray, and Morel~\cite{MR3010736};
		
		\item $x\mapsto 2^x$, claimed without proof by Kierstead, Penrice, and Trotter~\cite{MR1361383};
		
		\item $x\mapsto x^{\log x}$, by Scott, Seymour, and Spirkl~\cite{MR4648583} (here $\log$ denotes the binary logarithm); and
		
		\item $x\mapsto x^{O(\log x/\log\log x)}$, by the author~\cite[Chapter 14]{2025thes}. 
	\end{itemize}
	
	The second line of research strives to confirm polynomial $\chi$-boundedness for various $\{P_5,H\}$-free (meaning both $P_5$-free and $H$-free) subclasses for some non-complete $P_5$-free graph $H$ (see the two recent PhD theses~\cite{MR4966898,geisser} and the surveys~\cite{MR3898374,MR4840998} for results in this direction). In the case of five-vertex $H$, perhaps the most well-known verified instances are:
	\begin{itemize}
		\item when $H$ is the complement $\overline{P_5}$ of $P_5$, first proved with $\chi$-binding function $x\mapsto {x+1\choose2}$ by Fouquet, Giakoumakis, Maire, and Thuillier~\cite{MR1360104} who adapted a structural result Fouquet~\cite{MR1246159} that every $\{P_5,\overline{P_5}\}$-free graph is either the five-cycle $C_5$, or perfect, or obtained by vertex-substitution from two smaller $\{P_5,\overline{P_5}\}$-graphs (see~\cite{MR3601318} for a refinement of this result); and
		
		\item when $H$ is the {\em bull} (obtained from $P_5$ by adding an edge joining the two non-adjacent vertices of degree two), first proved with $\chi$-binding function $x\mapsto {x+1\choose2}$ by Chudnovsky and Sivaraman~\cite{MR3879962} who showed that every $\{P_5,\text{bull}\}$-free graph has a vertex-partition into two induced subgraphs where one has smaller clique number and the other is perfect.
	\end{itemize}
	
	However, despite much effort, it remained open whether the class of $\{P_5,C_5\}$-free graphs is polynomially $\chi$-bounded, as was emphasised in~\cite{MR3898374,MR4840998,geisser}.
	
	Confirming \cref{conj:pgs} for $T=P_5$, this paper provides the first polynomial $\chi$-binding function for $P_5$-free graphs in four decades, as follows.
	
	\begin{theorem}
		\label{thm:p5}
		There exists $d\ge2$ such that every $P_5$-free graph $G$ satisfies $\chi(G)\le\omega(G)^{d}$.
	\end{theorem}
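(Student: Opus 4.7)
The plan is to prove \cref{thm:p5} by an extremal argument on a minimum counterexample, using the three ingredients outlined in the abstract. Fix a suitable constant $d \geq 2$ and suppose for contradiction that $G$ is a $P_5$-free graph with $\chi(G) > \omega(G)^d$, chosen to minimize $\chi(G)$ (or with some other inductive setup, e.g.\ minimizing $\omega$). The proof should combine a structural dichotomy for $\chi$ with a density-increment argument powered by \erh{}.

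First I would establish the chromatic R\"odl dichotomy: every $P_5$-free graph $G$ contains an induced subgraph $H$ with $\chi(H)$ a polynomial fraction of $\chi(G)$ that is either \emph{chromatic-dense}, meaning some vertex $v$ has $\chi(N_H(v)) \geq \chi(H)^{1-\eta}$ for a small fixed $\eta > 0$, or \emph{chromatic-sparse}, containing two anticomplete subsets each of polynomial chromatic number. The Gy\'arf\'as path argument applied to a BFS levelling from some vertex should yield this: in a $P_5$-free graph, vertices at BFS distance $\geq 3$ are constrained by $P_5$-freeness, which forces the high-chromatic weight to live in few levels and produces the dichotomy. The quantitative point is to ensure only a polynomial (not super-polynomial) loss in $\chi$ at each application, which is what separates this R\"odl-type result from its earlier super-polynomial analogues.

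Next, iterate the dichotomy as a decomposition: whenever we land on the chromatic-sparse side, extract an anticomplete pair and recurse. This yields either (a) a polynomially chromatic-dense induced subgraph $H_0$ after finitely many steps, or (b) an indefinitely growing chain of high-chromatic anticomplete pairs. In case (a) we move to the density-increment step; in case (b), the indefinitely growing structure together with the \erh{} black box and the minimality of $G$ should allow one to assemble a clique of size larger than $\omega(G)$ directly. The chromatic density increment in case (a) then proceeds as follows: inside $H_0$, a vertex $v$ has $\chi(N_{H_0}(v))$ nearly equal to $\chi(H_0)$; applying the \erh{} theorem for $P_5$ to the $P_5$-free graph $N(v)$ gives a clique or stable set of polynomial size. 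A polynomial-size clique adjoined with $v$ increases the maximum known clique by one; a polynomial-size stable set, removed, produces a chromatic saving that keeps the dense substructure, letting the recursion resume with a smaller target chromatic number. After at most $\omega(G)$ rounds, the iteration builds a clique exceeding $\omega(G)$, the contradiction.

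The hardest step, I expect, will be the second ingredient---the decomposition---and specifically the case analysis between \dd indefinite growth\ee{} and \dd dense capture.\ee{} Managing both alternatives with only a polynomial cumulative loss in $\chi$, rather than the quasi-polynomial losses that capped the earlier bounds at $x^{O(\log x / \log\log x)}$, is the central technical challenge and the place where the polynomial exponent $d$ will ultimately be constrained. The R\"odl dichotomy and the \erh{}-powered density increment each have template-like flavors familiar from the $\chi$-boundedness literature; the novelty lies in how step two bridges them and prevents the chromatic loss from ballooning across recursion.
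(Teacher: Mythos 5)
There is a genuine gap, and it sits at the heart of the argument. The paper does not derive \cref{thm:p5} from a minimum counterexample plus a ``grow a clique one neighbourhood at a time'' scheme; it reduces \cref{thm:p5} to \cref{thm:main}, which asserts that every $P_5$-free $G$ with $\chi(G)\ge 2^d$ contains either a complete pair $(X,Y)$ with $\chi(X)\ge y^d\chi(G)$ and $\chi(Y)\ge(1-y)\chi(G)$ for some $y\in(0,1)$, or a complete blockade of length $k\ge2$ with every block of chromatic number at least $k^{-d}\chi(G)$. The point of this precise trade-off is that a complete pair or blockade splits the clique number proportionally ($\omega(X)\le y\,\omega(G)$ or $\omega(Y)\le(1-y)\omega(G)$, resp.\ some $\omega(B_i)\le\omega(G)/k$), so induction on $\omega$ closes with the polynomial exponent $d$. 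Your scheme lacks this quantitative structure: passing $\omega(G)$ times to a neighbourhood whose chromatic number is $\chi^{1-\eta}$ (or a constant fraction, which Gy\'arf\'as' path argument already gives for free in every $P_5$-free graph, so your ``chromatic-dense'' alternative carries no dichotomy content) yields only super-polynomial bounds of exactly the kind the earlier papers obtained; nothing in your outline prevents the cumulative loss from ballooning, because the loss per round is never tied to a proportional drop in $\omega$. Note also that the paper's $(\eps,\chi)$-dense notion is the much stronger ``every vertex $v$ has $\chi(G\setminus N_G[v])<\eps\chi(G)$'', not ``some vertex has a large chromatic neighbourhood''; it is the former that powers the increment.

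Two further steps as written would fail. First, your density-increment step misuses the \erh{} black box: it produces a clique or stable set polynomial in the \emph{number of vertices}, and chromatic number is not amenable to such counting (the paper stresses this and builds workarounds); worse, deleting a stable set lowers $\chi$ by at most $1$, so the promised ``chromatic saving'' does not exist. In the paper, \erh{} is applied to an auxiliary pattern graph whose vertices are the blocks of a blockade (\cref{lem:waymid}), converting it into an anticomplete or $(\eps,\chi)$-dense blockade; the increment then runs on a chromatic-density parameter and terminates when that parameter drops below a negative power of $\chi(G)$, at which point a transversal clique of size $\chi(G)^{\Omega(1)}$ (hence a complete blockade of singletons) appears. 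Second, the ``indefinite growth'' branch is not closed by assembling a clique: in the paper, once the anticomplete pair $(A,B)$ has both sides of chromatic number $(1-o(1))\chi(G)$, one takes a minimal cutset separating them, and \cref{lem:mixed} (the $P_5$-free mixing lemma) forces each cutset vertex to be complete to $A$ or to $B$, yielding exactly the unbalanced complete pair $(\{v\},N_G(v))$ needed for the induction on $\omega$. These mechanisms --- the complete-pair/blockade trade-off, the every-vertex density notion, the blockade-level use of \erh{}, and the cutset argument --- are the substance of the proof, and the proposal does not supply substitutes for them.
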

	
	Our proof of this result introduces a novel `chromatic density' framework on $P_5$-free graphs and examines how the $P_5$-free constraint influences the chromatic number of their induced subgraphs. To the best of our knowledge, the key ideas coming from this framework have not been considered in $\chi$-boundedness prior to this work; we will discuss them in \cref{sec:sketch}. 
	
	\subsection{Connections to hereditary Ramsey properties}
	In what follows, for a graph $G$, we use $\abs G$ and $\alpha(G)$ to denote its number of vertices and its stability number (the maximum number of pairwise nonadjacent vertices in $G$), respectively.
	A graph class $\mac G$ is {\em hereditary} if it is closed under isomorphism and taking induced subgraphs, and is {\em proper} if there exists a graph $H$ not in $\mac G$.
	The notion of polynomial $\chi$-boundedness and the polynomial \gas{} conjecture~\ref{conj:pgs} are of particular interest because of the following celebrated conjecture of Erd\H{o}s and Hajnal from 1977~\cite{MR1031262,MR599767} on diagonal Ramsey numbers under structural constraints (see~\cite{MR1425208,MR3150572} for surveys):
	
	\begin{conjecture}
		[\erh]
		\label{conj:eh}
		For every proper hereditary class $\mac G$, there exists $c>0$ such that $\max(\alpha(G),\omega(G))\ge\abs G^c$ for all $G\in \mac G$.
	\end{conjecture}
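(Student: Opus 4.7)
The plan is to attack the \erh{} conjecture by combining a prime-graph reduction with a chromatic (or polynomial edge-density) increment argument modelled on the $P_5$ strategy sketched above. The first step is to reduce to the case in which the forbidden graph $H$ is \emph{prime} (admits no non-trivial module). By the substitution lemma of Alon, Pach and Solymosi, if the conjecture holds for $H_1$ and $H_2$ then it holds for every graph obtained by substituting $H_2$ at a vertex of $H_1$; so one may proceed by induction on $\abs{V(H)}$ and assume $H$ is prime, and separately handle disconnected $H$ and $\overline H$ by standard arguments.

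Next, for each prime $H$, I would try to establish a R\"odl-type theorem upgraded to polynomial size: every $H$-free graph on $n$ vertices contains an induced subgraph on $n^{\Omega(1)}$ vertices whose edge density is either at most $\eps$ or at least $1-\eps$. R\"odl's original theorem gives such a subgraph on the sparse side but only of polylogarithmic size; for \erh{} one needs polynomial size on both sides simultaneously, which appears to require a Gy\'arf\'as-path-type decomposition adapted to the class, analogous to the one proved in this paper for $P_5$.

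Assuming such a dichotomy, I would then run an edge-density (or chromatic-density) increment: given an $H$-free graph $G$ of edge density $\delta$, either extract a clique or stable set of size $\abs{G}^c$, or locate an induced subgraph $G'$ of size $\abs{G}^{1-o(1)}$ whose edge density exceeds $\delta$ by $\Omega(1)$. Iterating only $O(1)$ times drives the density to $1$, forcing a clique of polynomial size; tracking the multiplicative loss in $\abs{G}$ per iteration yields the Erd\H{o}s--Hajnal exponent $c>0$.

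The main obstacle, and the reason this conjecture has resisted nearly fifty years of attention, is the absence of any general structural theorem for $H$-free graphs when $H$ is an arbitrary prime graph. For $P_5$ one has the Gy\'arf\'as path, the Scott--Seymour--Spirkl machinery of~\cite{MR4648583}, and the new decomposition and density-increment tools introduced in this paper; for larger prime $H$ no comparable handle is known, and an inductive hypothesis on smaller prime forbidden graphs does not obviously transfer to neighbourhoods inside an $H$-free graph. Any proof of the full conjecture would therefore presumably need a new structural principle applying uniformly across proper hereditary classes, beyond the $H$-specific arguments currently available.
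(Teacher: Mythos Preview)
The statement you are attempting to prove is \cref{conj:eh}, the \erh{} conjecture. This is stated in the paper as a \emph{conjecture}, not a theorem: the paper does not prove it, and indeed it remains one of the central open problems in structural graph theory. The paper only uses the (already known) special case $H=P_5$ as a black box in the proof of \cref{lem:waymid}. There is therefore no ``paper's own proof'' to compare against.

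Your proposal is not a proof but a research programme, and you essentially concede this yourself in the final paragraph. The concrete gaps are exactly the ones you identify: the polynomial R\"odl dichotomy you need in the second step is itself equivalent to \cref{conj:eh} (this is the Buci\'c--Fox--Pham result~\cite{bfp2024} cited in the paper), so assuming it is circular; and the density-increment step in the third paragraph requires, at minimum, a structural handle on $H$-free graphs for arbitrary prime $H$ that simply does not exist at present. The Alon--Pach--Solymosi reduction to prime $H$ is correct and standard, but it does not bring the problem within reach. In short, what you have written is a reasonable high-level summary of why the conjecture is hard and what a proof might look like, but it is not a proof and should not be presented as one.
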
 
	
	To explain, observe that if a hereditary class $\mac G$ is $\chi$-bounded with $\chi$-binding function $x\mapsto x^d$ for some $d\ge1$ then it satisfies \cref{conj:eh} for $c=\frac1{d+1}$: indeed, for every $G\in\mac G$, the simple inequality $\alpha(G)\chi(G)\ge\abs G$ implies that
	\begin{equation}
		\label{eq:pchieh}
		\omega(G)^d\ge\chi(G)\quad\Rightarrow\quad
		\alpha(G)\omega(G)^d\ge\abs G\quad\Rightarrow\quad
		\max(\alpha(G),\omega(G))\ge\abs G^{\frac1{d+1}}.
	\end{equation}
	
	We remark that the middle inequality above displays a substantially stronger Ramsey property than the prediction of \cref{conj:eh}.
	Indeed, while \cref{conj:eh} could be true for {\em all} proper hereditary classes, if a graph $T$ satisfies $\alpha(G)\omega(G)^d\ge\abs G$ for all $T$-free graphs $G$ then $T$ has to be a forest. This is because the aforementioned probabilistic construction of Erd\H os~\cite{MR102081} actually shows that for every $g\ge 3$ and every sufficiently large $n$, there are $n$-vertex graphs with no cycle of length at most $g$ (hence triangle-free) and with stability number at most $n^{1-1/g+o(1)}$ (and so high-chromatic; see~\cite{MR963118,MR939574} for explicit constructions of this type). Hence, the following approximation of the \gas{} conjecture \ref{conj:gs} proved by Scott, Seymour, and the author~\cite{stable1} can be viewed as `if and only if' for characterising forests: for every $\eps>0$ and every forest $T$, there exists $\delta>0$ such that every $T$-free graph $G$ with $\omega(G)\le\delta\log\log\abs G$ satisfies $\alpha(G)\ge\abs G^{1-\eps}$.
	On a side note, it remains open whether for all forests $T$, every $T$-free graph $G$ with bounded clique number has stability number linear in $\abs G$, which is a consequence of \cref{conj:gs}.
	
	At the other extreme, there are known examples of hereditary classes $\mac G$ that satisfy the middle inequality of \eqref{eq:pchieh} but not the left-hand side one; perhaps the most well-known such example is the hereditary closure of the triangle-free Kneser graphs $\{\operatorname{KG}(3n-1,n):n\in\mab N\}$ where each $\operatorname{KG}(3n-1,n)$ has chromatic number $n+1$ (due to Lov\'asz~\cite{MR514625}) and fractional chromatic number less than $3$.
	On the other hand, if \cref{conj:pgs} is true then all inequalities in \eqref{eq:pchieh} would hold for classes $\mac G$ defined by forbidding a single forest.
	In what follows, 
	let us say that a forest $T$ is:
	\begin{itemize}
		\item {\em poly-$\chi$-bounding} if there exists $d\ge1$ such that $\chi(G)\le\omega(G)^d$ for all $T$-free graphs $G$ (so $T$ satisfies \cref{conj:pgs}); and
		
		\item {\em \erh{}} if there exists $c>0$ such that $\max(\alpha(G),\omega(G))\ge\abs G^c$ for all $T$-free graphs $G$ (so the class of $T$-free graphs satisfies \cref{conj:eh}).
	\end{itemize}
	
	To show that a forest $T$ is poly-$\chi$-bounding, a possible pathway would be first to prove that $T$-free graphs satisfy \cref{conj:gs,conj:eh}.
	One could then build on the \erh{} property of $T$ and the $\chi$-boundedness proof for $T$-free graphs to eventually prove its poly-$\chi$-bounding property.
	The case $T=P_5$ is an example of this approach.
	Indeed, the \erh{} property of $P_5$ was recently verified by Scott, Seymour, and the author~\cite{density7}, and will be employed to prove \cref{thm:p5} in this paper.
	The proof of \cref{thm:p5} also adapts the `Gy\'arf\'as path' argument~\cite{MR951359} that gave the first $\chi$-binding function $x\mapsto 4^{x-1}$ for $P_5$-free graphs.
	It would be interesting to extend this approach to settle \cref{conj:pgs} for $T$ being an arbitrary path. In this regard, it remains open whether $P_t$ has the \erh{} property for all $t\ge 6$; see~\cite{density5} for the best known partial result.
	
	\section{Preliminaries and proof overview}
	\label{sec:sketch}
	For an integer $k\ge1$, let $[k]:=\{1,2,\ldots,k\}$.
	For a graph $G$ with vertex set $V(G)$, a {\em clique} of $G$ is a set of pairwise adjacent vertices in $G$; and a {\em stable set} of $G$ is a set of pairwise nonadjacent vertices in $G$.
	For $S\subset V(G)$, let $G[S]$ be the graph on vertex set $S$ whose edges are all the edges $uv$ of $G$ with $u,v\in S$, and let $G\setminus S:=G[V(G)\setminus S]$; we also write $\chi(S)$ for $\chi(G[S])$ where there is no danger of ambiguity.
	A pair $(A,B)$ of nonempty disjoint subsets of $V(G)$ is:
	\begin{itemize}
		\item {\em complete} in $G$ if every vertex in $A$ is adjacent in $G$ to every vertex in $B$,
		
		\item {\em anticomplete} in $G$ if $G$ has no edge between $A,B$, and
		
		\item {\em pure} in $G$ if it is complete or anticomplete in $G$.
	\end{itemize}
	Also, say that $A$ is {\em complete} to $B$ in $G$ in the first case, and {\em anticomplete} to $B$ in $G$ in the second case.
	A {\em blockade} in a graph $G$ is a sequence $(B_1,\ldots,B_k)$ of disjoint subsets of $V(G)$; its {\em blocks} are $B_1,\ldots,B_k$, its {\em length} is $k$, and its {\em mass} is $\min(\chi(B_i):i\in[k])$.
	We emphasise that our argument extensively studies length and mass of blockades.
	We also say that the blockade $(B_1,\ldots,B_k)$ is {\em complete} in $G$ if $(B_i,B_j)$ is complete in $G$ for all distinct $i,j\in[k]$.
	\cref{thm:p5} is proved via the following result.
	
	\begin{theorem}
		\label{thm:main}
		There exists $d\ge 2$ such that every $P_5$-free graph $G$ with $\chi(G)\ge2$ contains either:
		\begin{itemize}
			\item a complete pair $(X,Y)$ with $\chi(X)\ge y^{d}\chi(G)$ and $\chi(Y)\ge (1-y)\chi(G)$ for some $y\in(0,\frac14)$; or
			
			\item a complete blockade of length $k$ and mass at least $k^{-d}\chi(G)$, for some integer $k\ge2$.
		\end{itemize}
	\end{theorem}
	
	In other words, this result says that every $P_5$-free graph with at least one edge contains an `appropriately unbalanced' complete bipartite subgraph or a `polynomially balanced' complete multipartite subgraph.
	On one hand, \cref{thm:p5} evidently implies \cref{thm:main}, because if $\chi(G)\le\omega(G)^d$ then the second outcome of \cref{thm:main} holds with $k=\omega(G)$ by taking a maximum clique of $G$.
	On the other hand, let us now deduce \cref{thm:p5} from \cref{thm:main}.
	
	\begin{proof}
		[Proof of \cref{thm:p5}, assuming \cref{thm:main}]
		We claim that $d\ge2$ given by \cref{thm:main} suffices.
		To see this, we proceed by induction on $\omega(G)$; and we may assume that $\omega(G)\ge2$ and so $\chi(G)\ge 2$.
		By \cref{thm:main}, $G$ contains either:
		\begin{itemize}
			\item a complete pair $(X,Y)$ with $\chi(X)\ge y^{d}\chi(G)$ and $\chi(Y)\ge(1-y)\chi(G)$ for some $y\in(0,\frac14)$; or
			
			\item a complete blockade $(B_1,\ldots,B_k)$ of mass at least $k^{-d}\chi(G)$, for some integer $k\ge2$.
		\end{itemize}
		
		If the first bullet holds then either $\omega(X)\le y\cdot\omega(G)$ or $\omega(Y)\le(1-y)\cdot\omega(G)$. If the former case holds then $1\le \omega(X)\le y\cdot\omega(G)$ (since $\chi(X)>0$) so by induction $\chi(G)\le y^{-d}\chi(X)\le y^{-d}\omega(X)^{d}\le \omega(G)^{d}$;
		and if the latter case holds then $1\le\omega(Y)\le(1-y)\cdot\omega(G)<\omega(G)$ and so by induction
		\[\chi(G)\le (1-y)^{-1}\chi(Y)\le (1-y)^{-1}\omega(Y)^{d}\le (1-y)^{d-1}\omega(G)^{d}\le \omega(G)^{d}.\]
		
		If the second bullet holds then $2\le k\le \omega(G)$ and there exists $i\in[k]$ with $\omega(B_i)\le \omega(G)/k<\omega(G)$ since $(B_1,\ldots,B_k)$ is complete in $G$; and so by induction 
		$\chi(G)\le k^{d}\chi(B_i)\le k^{d}(\omega(G)/k)^{d}= \omega(G)^{d}$.
		This proves \cref{thm:p5}.
	\end{proof}
	The rest of this paper presents a proof of \cref{thm:main}, which develops a number of novel ideas and substantially extends those introduced in the author's PhD thesis~\cite[Chapters 14]{2025thes} and by Scott, Seymour, and the author in~\cite{density7}.
	The heart of the proof a `chromatic density' framework for $P_5$-free graphs, which examines the chromatic number of their induced subgraphs under the $P_5$-free constraint.
	This is done via three primary steps and will be summarised in the following three subsections. Along the way, we will give more definitions and outline the paper.
	
	\subsection{Chromatic quasirandomness}
	\label{subsec:chirdl}
	Introduced in seminal works of Thomason~\cite{MR930498,MR905280} and of Chung, Graham, and Wilson~\cite{MR1054011}, quasirandomness refers to a set of distant but surprisingly equivalent deterministic graph properties that are satisfied almost surely by random graphs.
	A key intuition behind this notion is that if the edge distribution of a large host graph is not very close to being `non-random', then the host graph necessarily contains an induced copy (and so many induced copies) of all small graphs. This idea is manifested by the following 1986 theorem of R\"odl~\cite{MR837962}; see~\cite{MR1952989} for a survey on this result.
	
	\begin{theorem}
		[R\"odl]
		\label{thm:rodl}
		For every $\eps\in(0,\frac12)$ and every graph $H$, there exists $\delta>0$ such that every $H$-free graph $G$ has an induced subgraph $F$ with $\abs F\ge \delta\abs G$ such that $F$ has maximum degree at most $\eps\abs F$ or minimum degree at least $(1-\eps)\abs F$.
	\end{theorem}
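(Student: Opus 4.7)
The plan is to deploy Szemer\'edi's regularity lemma together with an induced embedding argument. Fix $h:=\abs{V(H)}$, and choose a regularity parameter $\eps'>0$ small enough in terms of $h$ and $\eps$. Apply the regularity lemma to $G$ to obtain an equipartition $V(G)=V_0\cup V_1\cup\cdots\cup V_k$ with $\abs{V_0}\le \eps'\abs G$, all other parts of common size $m$, and all but at most $\eps' k^2$ of the pairs $(V_i,V_j)$ being $\eps'$-regular, where $k$ is bounded above in terms of $\eps'$ only.

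Classify each $\eps'$-regular pair as \emph{sparse} if its density lies in $[0,\eps']$, \emph{dense} if it lies in $[1-\eps',1]$, and \emph{balanced} otherwise. The key observation is that $G$ being $H$-free forces the graph $B$ on $[k]$ whose edges are the balanced regular pairs to contain no clique of size $h$: for if $\{i_1,\dots,i_h\}\subset[k]$ were such a clique, then by the induced counting lemma, picking one vertex per $V_{i_j}$ uniformly at random would yield an induced copy of $H$ with positive probability, contradicting $H$-freeness. Next, discard from $[k]$ the few indices incident to too many irregular pairs, and apply Ramsey-type bounds to $B$ (which is $K_h$-free) to extract a set $I\subset[k]$ of size at least $k^{1/(h-1)}$ on which every regular pair is either sparse or dense. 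A further application of Ramsey's theorem to the sparse/dense $2$-colouring of $\binom{I}{2}$ yields $J\subset I$ of size tending to infinity with $k$ such that all regular pairs in $J$ are of the same type, say sparse (the dense case being symmetric by complementation).

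Now consider $F':=\bigcup_{i\in J}V_i$, which has size $\abs J\cdot m\ge \delta\abs G$ for a suitable $\delta=\delta(\eps,H)$. Inside each $V_i$ with $i\in J$, regularity lets us discard an $O(\eps')$-fraction of vertices to ensure that every remaining vertex has at most $2\eps' m$ neighbours in each $V_j$ with $j\in J\setminus\{i\}$ along a sparse regular pair; absorbing the few irregular pairs as a worst-case contribution and choosing $\eps'$ sufficiently small compared to $\eps/\abs J$ yields that every retained vertex has total degree at most $\eps\abs F$ inside the resulting induced subgraph $F$, as required.

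The main obstacle is calibrating the induced counting lemma so that having $h$ pairwise-balanced $\eps'$-regular parts really does force an induced copy of $H$: one must, for each of the $2^{\binom h2}$ possible adjacency patterns among $h$ chosen vertices, show a positive fraction realises exactly the edge set of $H$. This drives the choice $\eps'\ll \eps, 1/h$ and dictates the dependence of $\delta$ on $H$; once it is in place, the two Ramsey extractions and the degree clean-up are routine, but the quantitative coupling between the regularity parameter, the Ramsey losses, and the final degree threshold $\eps\abs F$ has to be tracked carefully.
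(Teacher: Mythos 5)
The paper itself does not prove \cref{thm:rodl}: it is quoted as background, attributed to R\"odl, and the paper only remarks that the original proof goes through the regularity lemma with tower-type dependence of $\delta$ on $\eps$. So your proposal is measured against that classical argument, and your skeleton --- regularity partition, a reduced graph whose middle-density regular pairs form a $K_h$-free graph via the induced counting lemma, then Tur\'an/Ramsey extractions and a degree clean-up --- is indeed the standard route. But as written there is a genuine calibration error: you declare a regular pair ``balanced'' when its density lies in $(\eps',1-\eps')$, where $\eps'$ is also the regularity parameter. The induced embedding lemma for $h$ pairwise regular parts requires the regularity parameter to be much smaller than (a power of) the distance of the densities from $0$ and $1$; here a balanced pair may have density exactly comparable to $\eps'$, and no choice of $\eps'$ repairs this because the density gap shrinks with it. The sparse/dense thresholds must be fixed in terms of $\eps$ (or an absolute constant), with $\eps'$ then chosen much smaller than $\eps$ and $1/h$; only then does ``a $K_h$ of balanced regular pairs forces an induced copy of $H$'' hold.

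Two further steps would fail as stated. Your clean-up of irregular pairs bounds, for each surviving index, its number of irregular partners by a quantity of order $\sqrt{\eps'}\,k$; but $J$ has size only about $\log k$, so inside $J$ every pair at a given index could be irregular, and each irregular pair can contribute up to $m\approx |F|/|J|$ to a vertex's degree, so ``absorbing the few irregular pairs as a worst-case contribution'' can cost $\Theta(|F|)$. One must first extract, greedily, a set of parts containing no irregular pairs at all (its size is about $\eps'^{-1/2}$, independent of $k$) and only then apply the Tur\'an and Ramsey steps; this also shows $|J|$ is governed by $\eps'$, not by $k$, so your claim that $|J|\to\infty$ with $k$ is unsupported (the regularity lemma only bounds $k$ above unless you additionally impose a lower bound on the number of parts, which you never do). Finally, nothing controls a vertex's neighbours inside its own part $V_i$: regularity says nothing about edges within parts, so a vertex may have $m-1\approx|F|/|J|$ such neighbours, and the max-degree (resp.\ min-degree) conclusion therefore needs $|J|\ge 1/\eps$, a requirement you neither state nor secure. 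All of this is fixable --- fix the density thresholds in terms of $\eps$, extract an irregularity-free family first, and take $\eps'$ small enough that the final monochromatic family has at least $2/\eps$ parts --- but as written the degree bound on $F$ does not follow.
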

	
	This theorem has become a fundamental tool in recent progress on the \erh{} conjecture~\ref{conj:eh}.
	The original proof of R\"odl used Szemer\'edi's regularity lemma~\cite{MR540024} and gave tower-type dependence of $\delta$ on $\eps$.
	The best known general bound is currently $\delta=\eps^{d\log\frac1\eps/\log\log\frac1\eps}$ for some $d\ge1$ depending on $H$ only~\cite{MR4761787}; and the `polynomial R\"odl' conjecture posed by Fox and Sudakov~\cite{MR2455625} says that $\delta$ can be taken to be $\eps^d$, which is now known to be equivalent to \cref{conj:eh}~\cite{bfp2024}.
	
	When $H=T$ is a forest, Chudnovsky, Scott, Seymour, and Spirkl~\cite{MR4170220} proved the following result on linear-sized anticomplete pairs in sparse $T$-free graphs, which in turn implies (via \cref{thm:rodl}) that the class of graphs excluding $T$ and the complement of $T$ satisfies \cref{conj:eh}:
	
	\begin{theorem}
		[Chudnovsky--Scott--Seymour--Spirkl]
		\label{thm:pp1}
		For every $\eps>0$ and every forest $T$, there exists $\delta>0$ such that every $T$-free graph $G$ with $\abs G\ge2$ and maximum degree at most $\eps\abs G$ contains an anticomplete pair $(A,B)$ with $\abs A,\abs B\ge \delta\abs G$.
	\end{theorem}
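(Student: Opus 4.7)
The plan is to prove \cref{thm:pp1} by induction on $|V(T)|$, handling the forest case by a component-by-component reduction to the tree case. First I would dispose of the case when $T$ has isolated vertices: if $T$ contains $k$ isolated vertices, then $T$-freeness of $G$ combined with a greedy stability bound forces $|G|$ to be bounded in terms of $\eps$ and $k$, so we may take $\delta$ small enough to make the statement vacuous. This reduces us to the case where every component of $T$ has at least one edge; an outer induction on the number of components then lets us assume $T$ is a (nontrivial) tree.

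For the tree case, the base case is $T = P_2$: $T$-freeness forces $G$ to be edgeless, and any balanced partition of $V(G)$ yields an anticomplete pair of size $\lfloor |G|/2 \rfloor$. For the inductive step, let $T$ be a tree on $n \ge 3$ vertices, pick a leaf $v$ of $T$ with unique neighbour $u$, and set $T' := T - v$. Applying induction to $T'$ with parameter $\eps' := \eps/2$ yields some $\delta' > 0$ such that every $T'$-free graph $H$ with $|H| \ge 2$ and maximum degree at most $\eps'|H|$ contains an anticomplete pair of size at least $\delta'|H|$ each.

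Now given $G$ that is $T$-free with maximum degree at most $\eps|G|$, the main dichotomy is: either \emph{(a)} some induced subgraph $H$ of $G$ with $|H| \ge \frac{1}{2}|G|$ is $T'$-free, in which case its maximum degree is still at most $2\eps|H| \le \eps'|H|$ (after adjusting constants) and induction immediately delivers the desired anticomplete pair; or \emph{(b)} every large induced subgraph of $G$ contains an induced copy of $T'$. In case (b), I would exploit the key structural constraint enforced by $T$-freeness: for every induced copy $F \subseteq V(G)$ of $T'$ with the image of $u$ at vertex $y_F$, every vertex of $N(y_F) \setminus F$ must have a neighbour in $F \setminus \{y_F\}$, lest it extend $F$ to an induced copy of $T$. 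Averaging this constraint over many induced $T'$-copies, together with the max-degree bound, should let us find a vertex $x$ and a set $S \subseteq V(G) \setminus N[x]$ of linear size that is anticomplete to $x$ in such a way that, after restricting to $G[S]$ and iterating the argument on the (still sparse, still $T$-free) subgraph $G[S \setminus N(x)]$, we can grow $\{x\}$ and a symmetric counterpart into linear-sized anticomplete sets $A, B$.

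The main obstacle will be executing step (b) while keeping the constants under control: the $T$-freeness constraint is inherently \emph{asymmetric} (it only restricts $N(y_F)$ for one distinguished vertex of each $T'$-copy), and transferring this one-sided constraint into two symmetric linear-sized anticomplete sets is the crux of the argument. A secondary obstacle is that $\delta(\eps, T)$ will deteriorate rapidly in $|V(T)|$ through the double induction (over both the components and the vertices of a tree), but since the statement only asserts existence of \emph{some} $\delta > 0$ for fixed $T$, this is acceptable.
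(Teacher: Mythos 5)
The paper does not prove \cref{thm:pp1} at all: it is quoted as a black box from Chudnovsky, Scott, Seymour, and Spirkl (``Pure pairs.~I''), with a pointer to Bousquet--Lagoutte--Thomass\'e for the special case of paths. So there is no in-paper argument to compare with, and your proposal has to stand on its own; as it stands, it does not.

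The decisive gap is case (b) of your dichotomy, which is exactly the content of the theorem and for which you give no mechanism. The extension constraint you isolate --- that for an induced copy $F$ of $T'$ with distinguished vertex $y_F$, every vertex of $N(y_F)\setminus F$ has a neighbour in $F\setminus\{y_F\}$ --- only says $N(y_F)\subset F\cup N(F\setminus\{y_F\})$, i.e.\ it re-derives a degree-type bound of order $|T|\eps|G|$, which is no stronger than the hypothesis you already have; averaging it over copies of $T'$ does not by itself produce any anticomplete structure. Likewise, ``grow $\{x\}$ and a symmetric counterpart by iterating on $G[S\setminus N(x)]$'' is a restatement of the goal: every vertex trivially has a non-neighbourhood of size at least $(1-\eps)|G|-1$, and repeatedly passing to non-neighbourhoods shrinks the working set by a constant factor at each step without ever forcing the two sides you are building to be anticomplete \emph{and} of linear size simultaneously. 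This is precisely why the known proofs do not run a naive leaf-removal induction: Chudnovsky--Scott--Seymour--Spirkl prove a much stronger inductive statement (maintaining, for every vertex of a growing tree, a linear-sized candidate set, with ``cover/multicover'' conditions between candidate sets, so that getting stuck yields the anticomplete pair), and even the path case of Bousquet--Lagoutte--Thomass\'e needs a genuinely different idea rather than averaging over copies of $P_{k-1}$. You acknowledge this step as ``the crux,'' but that crux is the theorem.

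Two secondary points. First, the isolated-vertex reduction is false as stated: if $T$ has isolated vertices together with other components (e.g.\ $T=K_2+K_1$), $T$-freeness imposes no bound on $\alpha(G)$ or on $|G|$ (every edgeless graph is $T$-free), so $|G|$ is not bounded in terms of $\eps$ and $k$; and even when $|G|$ is bounded, shrinking $\delta$ does not make the conclusion vacuous, since an anticomplete pair must still exist. Isolated vertices need no special treatment: they are handled by the same component reduction, since for any vertex $v$ the set $V(G)\setminus N[v]$ is $(T\setminus t)$-free and has size at least $(1-\eps)|G|-1$. Second, in case (a) your constants go the wrong way: $H$ has maximum degree at most $\eps|G|\le 2\eps|H|$, so you must invoke the induction hypothesis with the \emph{larger} parameter $2\eps$, not $\eps/2$; relatedly, the component reduction only works once $\eps$ is small compared with $1/|T|$, so the argument should really be organised around the standard ``there exists $\eps>0$'' formulation of the theorem rather than the ``for every $\eps$'' phrasing quoted here.
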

	
	(Bousquet, Lagoutte, and Thomass\'e~\cite{MR3343757} previously provided a short proof when $T$ is an arbitrary path, based on the `Gy\'arf\'as path' argument~\cite{MR951359}.) Applying \cref{thm:rodl,thm:pp1} gives:
	
	\begin{theorem}
		\label{thm:antidense}
		For every $\eps>0$ and every forest $T$, there exists $\delta>0$ such that every $T$-free graph $G$ contains either:
		\begin{itemize}
			\item an anticomplete pair $(A,B)$ with $\abs A,\abs B\ge\delta\abs G$; or
			
			\item an induced subgraph $F$ with $\abs F\ge\delta\abs G$ and minimum degree at least $(1-\eps)\abs F$.
		\end{itemize}
	\end{theorem}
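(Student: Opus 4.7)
\medskip

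The plan is to apply Theorems~\ref{thm:rodl} and~\ref{thm:pp1} in sequence, exactly as the paragraph before the statement suggests. Given $\eps>0$ and a forest $T$, first set $\eps':=\min(\eps,\tfrac13)$ and invoke \cref{thm:rodl} with parameter $\eps'$ and $H:=T$ to obtain some $\delta_1\in(0,1)$ such that every $T$-free graph $G$ has an induced subgraph $F$ with $\abs F\ge\delta_1\abs G$ and either (a)~$F$ has maximum degree at most $\eps'\abs F$, or (b)~$F$ has minimum degree at least $(1-\eps')\abs F\ge(1-\eps)\abs F$. Next invoke \cref{thm:pp1} with $\eps'$ and $T$ to obtain $\delta_2\in(0,1)$ such that every $T$-free graph $F'$ with $\abs{F'}\ge2$ and maximum degree at most $\eps'\abs{F'}$ contains an anticomplete pair $(A,B)$ with $\abs A,\abs B\ge\delta_2\abs{F'}$.

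Now set $\delta:=\delta_1\delta_2$ and let $G$ be $T$-free. If $\abs G$ is smaller than a suitable absolute constant we can conclude by taking a single-vertex induced subgraph as $F$ (or by choosing $\delta$ small enough that the statement is vacuous), so assume $\abs G$ is large. Apply \cref{thm:rodl} to $G$ to produce $F$; since the class of $T$-free graphs is hereditary, $F$ is also $T$-free. In case~(b) the second bullet of the conclusion is witnessed by $F$ itself, because $\abs F\ge\delta_1\abs G\ge\delta\abs G$ and the minimum-degree condition holds with threshold $1-\eps$. In case~(a), apply \cref{thm:pp1} to $F$ (with $\abs F\ge 2$ once $\abs G$ is large enough) to obtain an anticomplete pair $(A,B)$ in $F$ with $\abs A,\abs B\ge\delta_2\abs F\ge\delta_2\delta_1\abs G=\delta\abs G$; and $(A,B)$ is anticomplete in $G$ as well because $F$ is an induced subgraph of $G$, yielding the first bullet.

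There is no genuine obstacle here: the only care needed is in quantifier order and in the choice of $\eps'\le\eps$ so that the dense case of Rödl feeds directly into the second bullet, while the sparse case feeds exactly the hypothesis of \cref{thm:pp1}. The absorption of small $\abs G$ into the constant $\delta$ is also routine.
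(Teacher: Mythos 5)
Your argument is correct and is essentially the paper's own proof: the paper obtains \cref{thm:antidense} by exactly this one-line combination of \cref{thm:rodl} (dense outcome feeding the second bullet) and \cref{thm:pp1} (applied to the sparse outcome inside the Rödl subgraph, which is still $T$-free by heredity), and you have merely written out the quantifiers and the choice $\eps'\le\min(\eps,\tfrac13)$ explicitly. The one caveat is your dismissal of small $\abs G$: a single-vertex $F$ does not have minimum degree $(1-\eps)\abs F$ and shrinking $\delta$ does not make either bullet vacuous (a tiny complete graph satisfies neither), but this is an edge case in the literal statement that the paper itself glosses over, and your argument is complete once $\abs G$ is large enough that the Rödl subgraph has at least two vertices.
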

	
	It is not hard to deduce \cref{thm:rodl} from this result when $H=T$ is a forest; to see this, one can iterate \cref{thm:antidense} to obtain a sufficiently long anticomplete blockade where every vertex subset still has linear size. Hence, for graphs with a forbidden induced forest, \cref{thm:rodl,thm:antidense} are essentially equivalent.
	
	Our first ingredient of the proof of \cref{thm:main} is a `chromatic quasirandomness' analogue of \cref{thm:rodl,thm:antidense} for $P_5$-free graphs.
	In order to do so, we require some definitions.
	For a graph $G$ with $v\in V(G)$, let $N_G(v)$ be the neighbourhood of $v$ in $G$, and let $N_G[v]:=N_G(v)\cup\{v\}$.
	For $\eps>0$, we say that $G$ is {\em $(\eps,\chi)$-dense} if $\chi(G\setminus N_G[v])<\eps\cdot\chi(G)$ for all $v\in V(G)$; in other words, $G$ is $(\eps,\chi)$-dense if for every $v\in V(G)$, the set of nonneighbours of $v$ in $G$ has chromatic number less than $\eps\cdot\chi(G)$.
	We emphasise that $(\eps,\chi)$-dense graphs are the central objects in this paper.
	As far as we know, these graphs were first introduced and studied in the author's PhD thesis~\cite[Chapter 14]{2025thes} (under the term {\em $\eps$-colourful graphs}).
	One can view $(\eps,\chi)$-dense graphs as chromatic analogues of graphs $G$ with minimum degree at least $(1-\eps)\abs G$, and can interpret $\eps$ as a `chromatic density' parameter of $(\eps,\chi)$-dense graphs.
	However $\chi$-dense graphs can have low minimum degree, and graphs with large minimum degree can be far from being $\chi$-dense; see \cref{prop:dense}.
	
	Here is a chromatic version of \cref{thm:antidense}, which essentially says that the `chromatic distribution' of every $P_5$-free graph is very far from being `uniform'.
	
	\begin{lemma}
		\label{lem:rdlchi}
		For every $\eps\in(0,1)$, there exists $\delta>0$ such that every $P_5$-free graph $G$ contains~either:
		\begin{itemize}
			\item an anticomplete pair $(A,B)$ with $\chi(A),\chi(B)\ge \delta\cdot\chi(G)$; or
			
			\item an $(\eps,\chi)$-dense induced subgraph $F$ with $\chi(F)\ge\delta\cdot\chi(G)$.
		\end{itemize}
	\end{lemma}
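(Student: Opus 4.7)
The plan is to adapt the Gy\'arf\'as path argument in its chromatic form. Fix $\eps\in(0,1)$ and take $\delta:=\eps^{C}$ where $C$ is an absolute constant, reflecting the fact that induced paths in $G$ have at most $4$ vertices. By passing to a chromatically dominant connected component, we may assume $G$ is connected.

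The iteration refines a connected subset $F\subseteq V(G)$, starting from $F=V(G)$. At each stage, if $G[F]$ is $(\eps,\chi)$-dense then we output $F$ as the desired subgraph. Otherwise there exists $v\in F$ with $\chi(F\setminus N_G[v])\ge\eps\chi(F)$. Since $G[F]$ is connected and $P_5$-free, its diameter is at most $3$ (otherwise a shortest path of length at least $4$ would be an induced $P_5$), so $F$ partitions as $\{v\}\cup L_1\cup L_2\cup L_3$ where $L_j$ is the set of vertices at distance exactly $j$ from $v$ in $G[F]$. Crucially $(L_1,L_3)$ is an anticomplete pair in $G$. If both $\chi(L_1)$ and $\chi(L_3)$ are at least $\delta\chi(G)$, we return this anticomplete pair. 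Otherwise one of $L_1,L_3$ carries less than $\delta\chi(G)$ of the chromatic number, so the chromatic mass of $F\setminus N_G[v]=L_2\cup L_3$ concentrates in $L_2$ or in $L_3$; we then replace $F$ with a chromatically dominant connected component of that layer and continue. The invariant is that $\chi(F)$ loses only an $\eps$-factor per iteration (up to an additive $\delta\chi(G)$ slack).

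The main obstacle is bounding the iteration depth by an absolute constant depending only on $\eps$. I would use the Gy\'arf\'as path mechanism for this: one selects $v$ and the subsequent refinement so that the chosen vertices $v_0,v_1,v_2,\ldots$, together with interpolating vertices from the distance-layer structure, build up an induced path in $G$ whose length strictly grows with each iteration. Because $G$ is $P_5$-free, this induced path has at most four vertices, capping the recursion depth at an absolute constant and yielding $\delta=\eps^{O(1)}$. The delicate part is engineering this linkage---ensuring that the interpolating vertices from stage $i$ remain anticomplete to $F_{i+1}$ so that the short induced paths across stages concatenate into a single, genuinely growing induced path in $G$.
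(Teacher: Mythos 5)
There is a genuine gap: the entire difficulty of the lemma is the bound on the recursion depth, and the mechanism you propose for it does not work as described. When you recurse from $F$ into a component $F'$ of the distance\nobreakdash-2 layer $L_2$, the new region is anticomplete to the old centre $v$, but it is \emph{not} anticomplete to the interpolating vertex $u\in L_1$ that you would need to splice into the path: every vertex of $L_2$ has a neighbour in $L_1$, and a single vertex of $L_1$ may be adjacent to essentially all of $L_2$. In the classical Gy\'arf\'as argument the induced path stays induced because at each step one deletes the neighbourhood of the \emph{newly added path vertex} before recursing; here you only delete $N[v]$ (and restrict to a layer), so nothing prevents the stage\nobreakdash-$i$ interpolating vertices from having neighbours throughout $F_{i+1}$, and the short paths from different stages need not concatenate into a single induced path. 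Trying to repair this by also deleting $N(u)$ destroys the chromatic bookkeeping, since $\chi(N(u)\cap F')$ can be almost all of $\chi(F')$ --- and this is exactly the regime in which one finds high-$\chi$ \emph{complete} pairs rather than anticomplete ones. That is why the paper's intermediate statement (Lemma~2.6) only promises a \emph{pure} pair (complete or anticomplete) with $\delta$ polynomial in $\eps$, and the anticomplete-only version you are proving is then deduced by a separate iteration: if neither outcome of Lemma~2.5 holds, each application of Lemma~2.6 must return a complete pair, these are assembled into a complete blockade of length $2^k$, and once $2^k\ge\eps^{-1}$ the union of $\eps^{-1}$ blocks of equal chromatic number is automatically $(\eps,\chi)$-dense. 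The Gy\'arf\'as path argument enters only inside the proof of Lemma~2.6, together with a further analysis (Lemma~3.2) of vertices mixed on high-$\chi$ sets; your sketch has no substitute for that step. A smaller slip: when $L_1$ is the light layer, the mass of $L_2\cup L_3$ need not concentrate in one of $L_2,L_3$, so the stated dichotomy is false (fixable by taking the heavier layer, at the cost of another constant factor).

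A second warning sign is quantitative: you claim $\delta=\eps^{C}$ for an absolute constant $C$. The paper's proof only gives $\delta=(2^{-7}\eps^{2})^{\lceil\log(1/\eps)\rceil}$, i.e.\ quasi-polynomial in $\eps$, and the authors explicitly state that whether $\delta$ can be taken polynomial in $\eps$ in this lemma is open. So the dependence you assert is not merely unproved in your sketch; it would resolve a problem the paper leaves open, which strongly suggests the depth-$O(1)$ recursion cannot be engineered along the lines you describe.
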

	
	We remark that the linear-$\chi$ anticomplete pair outcome as above is related to a conjecture of El-Zahar and Erd\H os~\cite{MR845138} that says graphs with huge chromatic number and bounded clique number contain high-chromatic anticomplete pairs (but not necessarily linear in the chromatic number of the graphs in consideration); see~\cite{MR4676642} for some partial results on this problem.
	Another result of this type, proved by Liebenau, Pilipczuk, Seymour, and Spirkl~\cite[Theorem 1.9]{MR3926277}, asserts that for every {\em caterpillar subdivision} $T$, there exists $b\ge2$ such that every $T$-free graph $G$ contains an anticomplete pair $(A,B)$ with $\chi(A),\chi(B)\ge b^{-\omega(G)}\chi(G)$.
	In~\cite[Chapter 14]{2025thes} a weakening of \cref{lem:rdlchi} was obtained, by allowing for an additional outcome that $G$ contains a complete pair $(X,Y)$ with $\chi(X)\ge\omega(G)^{-d}\chi(G)$ and $\chi(Y)\ge\eps^{d}\chi(G)$, for some universal $d\ge2$.
	
	We are actually not going to use \cref{lem:rdlchi} in the proof of \cref{thm:main}; instead, we will employ the following result on linear-$\chi$ pure pairs versus linearly $\chi$-dense induced subgraphs, which is the aforementioned first ingredient as well as an analogue of \cref{thm:rodl,thm:antidense}:
	
	\begin{lemma}
		\label{lem:chirdl}
		Let $\eps\in(0,1)$ and $\delta\in(0,2^{-7}\eps^2]$. Then every $P_5$-free graph $G$ contains either:
		\begin{itemize}
			\item a pure pair $(A,B)$ with $\chi(A),\chi(B)\ge \delta\cdot\chi(G)$; or
			
			\item an $(\eps,\chi)$-dense induced subgraph $F$ with $\chi(F)\ge \delta\cdot\chi(G)$.
		\end{itemize}
	\end{lemma}
	
	Qualitatively, this lemma appears weaker than \cref{lem:rdlchi} because the first outcome includes a complete pair possibility. However this is sufficient for the conclusion of \cref{thm:main} when $\eps$ is a constant; and the polynomial (actually, quadratic) dependence of $\delta$ on $\eps$ will numerically simplify our argument.
	Also, it is not hard to derive \cref{lem:rdlchi} from \cref{lem:chirdl}, as follows.
	\begin{proof}
		[Proof of \cref{lem:rdlchi}, assuming \cref{lem:chirdl}]
		Let $\eta:=2^{-7}\eps^2$ and $a:=1+\floor{\log\frac1\eps}$; we claim that $\delta:=\eta^a$ suffices.
		To this end, suppose that both outcomes of the lemma do not hold.
		Let $k\ge0$ be maximal such that $k\le a$ and $G$ contains a complete blockade $(B_1,\ldots,B_{2^k})$ with $\chi(B_i)\ge \eta^{k}\chi(G)$ for all $i\in[2^k]$; for $k=0$ one can take $B_1:=V(G)$.
		
		We claim that $k=a$.
		Suppose not. Then for each $i\in[2^k]$, by \cref{lem:chirdl} applied to $G[B_i]$, this graph contains either:
		\begin{itemize}
			\item a pure pair $(A_{2i-1},A_{2i})$ with $\chi(A_{2i-1}),\chi(A_{2i})\ge\eta\cdot\chi(B_i)$; or
			
			\item an $(\eps,\chi)$-dense induced subgraph with chromatic number at least $\eta\cdot\chi(B_i)$.
		\end{itemize}
		
		Since $\eta\cdot\chi(B_i)\ge \eta^{k+1}\chi(G)\ge \eta^{a}\chi(G)$ by our supposition and since the second outcome of the lemma fails, the second bullet fails.
		Thus the first bullet holds; and so $(A_{2i-1},A_{2i})$ is a complete pair in $G[B_i]$ because the first outcome of the lemma fails.
		Since this holds for all $i\in[2^k]$, the complete blockade $(A_1,\ldots,A_{2^{k+1}})$ then violates the maximality of $k$.
		
		Hence $k=a$.
		By removing vertices if necessary we may assume that $\chi(B_1)=\ldots=\chi(B_{2^k})\ge\eta^k\chi(G)=\delta\cdot\chi(G)$.
		Let $F:=G[B_1\cup\cdots\cup B_{2^k}]$.
		Since $2^k=2^a>\eps^{-1}$ by the definition of $a$, we have $\chi(F)=\sum_{i\in[2^k]}\chi(B_i)=2^k\chi(B_1)>\eps^{-1}\chi(B_1)$;
		and so $F$ is $(\eps,\chi)$-dense.
		This proves \cref{lem:rdlchi}.
	\end{proof}
	
	It would be interesting to obtain polynomial dependence of $\delta$ on $\eps$ in \cref{lem:rdlchi}, which would unify \cref{lem:rdlchi,lem:chirdl}; but we have not been able to decide this. It would also be interesting to extend these results to graphs with forbidden induced forests, since this would potentially be useful in the study of (polynomial) $\chi$-boundedness. For instance, if a forest $T$ satisfies \cref{lem:rdlchi,lem:chirdl} without the anticomplete pair outcome (we have not been able to decide this for $T=P_5$), then the Gy\'arf\'as--Sumner conjecture \ref{conj:gs} would hold for $T$ (by taking $\eps=\omega(G)^{-2}$); and if this also holds with $\delta=\eps^d$ for some $d\ge1$ depending on $T$ only, then $T$ would satisfy \cref{conj:pgs}.
	This is parallel with the polynomial R\"odl conjecture discussed at the beginning of this subsection.
	
	\subsection{Decomposing along high-$\chi$ anticomplete pairs}
	\label{subsec:highchi}
	
	The core idea of structural graph theory is to decompose graphs with forbidden substructures into simpler pieces.
	In the context of polynomial $\chi$-boundedness, the most dramatic application of this idea is the proof of the strong perfect graph theorem~\cite{MR2233847}; and recently a variety of decomposition techniques have been instrumental in establishing polynomial $\chi$-boundedness for  circle graphs~\cite{MR4275079}, grounded L-graphs~\cite{MR4670367}, even-hole-free graphs~\cite{MR4568110}, bounded clique-width graphs~\cite{MR4125349}, and bounded twin-width graphs~\cite{MR4865478}.
	The common theme of all these proofs is that the `simpler pieces' in question actually belong to (considerably) more structured hereditary classes than those of their corresponding host graphs. These more structured classes are usually known to be polynomially $\chi$-bounded (or even perfect in the cases of~\cite{MR2233847,MR4275079}), and analysing how the decompositions were formulated would lead to the desired polynomial $\chi$-boundedness (or perfectness in the case of~\cite{MR2233847}); see~\cite{MR3096332,MR4033105,MR4865478} for a number of decompositions that preserve this property.
	To the best of our knowledge, this structural approach was also the motivation behind the second line of research towards \cref{thm:p5} that was summarised in \cref{subsec:main}. Indeed, establishing polynomial $\chi$-boundedness for various subclasses of $P_5$-free graphs in this way was expected to suggest a structural decomposition of these graphs, which would lead to the poly-$\chi$-bounding property of $P_5$.
	However, such a decomposition is yet to be found despite considerable effort.
	
	Our second ingredient of the proof of \cref{thm:main} will be an argument that decomposes $P_5$-free graphs along high-$\chi$ anticomplete pairs, first developed by the author in~\cite[Section 4]{2025thes} (under the term `terminal partitions'). Here, our approach is different from that of~\cite{MR2233847,MR4275079,MR4865478,MR4125349,MR4568110,MR4670367} because the `simpler pieces' in our decomposition are not structurally simpler than general $P_5$-free graphs.
	However, the interaction between these pieces and the constraints on their chromatic number will be useful in capturing high-$\chi$ complete pairs or polynomially $\chi$-dense induced subgraphs (all with appropriate parameters), thus reducing \cref{thm:main} to essentially only one case of $\chi$-dense subgraphs.
	Roughly speaking, the decomposition consists of a central cutset together with several connected components of the remainder satisfying certain conditions on chromatic number, which will be described in detail in the proof of \cref{lem:anti} in \cref{sec:highchi}.
	A reason why high-$\chi$ anticomplete pairs are useful in building such a decomposition is the following simple fact about $P_5$-free graphs:
	
	\begin{lemma}
		[folklore]
		\label{lem:mixed}
		Let $G$ be a $P_5$-free graph with nonempty and anticomplete $A,B\subset V(G)$, such that $G[A],G[B]$ are connected.
		Then every vertex in $V(G)\setminus(A\cup B)$ is pure to one of $A,B$.
	\end{lemma}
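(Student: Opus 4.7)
The plan is to argue by contradiction, picking a vertex that is mixed on both $A$ and $B$ and exhibiting a forbidden induced $P_5$.

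Concretely, suppose some $v \in V(G) \setminus (A \cup B)$ is mixed on both $A$ and $B$. Using the connectedness of $G[A]$ together with the existence of a neighbour and a non-neighbour of $v$ in $A$, I would walk along a path in $G[A]$ between them and locate an edge $aa' \in E(G[A])$ such that $v$ is adjacent to $a$ but non-adjacent to $a'$. By the same argument applied to $G[B]$, there is an edge $bb' \in E(G[B])$ with $v$ adjacent to $b$ and non-adjacent to $b'$.

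I then claim that $a'\text-a\text-v\text-b\text-b'$ is an induced $P_5$ in $G$. The four consecutive edges are present by construction. The non-edges to check are: $a'v$ and $vb'$ hold by choice of $a'$ and $b'$; and $a'b$, $a'b'$, $ab$, $ab'$ all hold because $(A,B)$ is an anticomplete pair. This contradicts the $P_5$-freeness of $G$, so no such $v$ exists, meaning every vertex outside $A \cup B$ is pure to $A$ or pure to $B$.

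There is essentially no obstacle here: the only small care needed is the standard observation that in a connected graph, if a set $S$ contains both neighbours and non-neighbours of an outside vertex $v$, then the induced subgraph must contain an edge one of whose endpoints is a neighbour of $v$ and the other a non-neighbour — this follows by considering any path from a neighbour to a non-neighbour and picking the first edge at which the adjacency to $v$ switches.
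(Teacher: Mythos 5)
Your proposal is correct and follows exactly the paper's argument: take a mixed vertex $v$, use connectedness of $G[A]$ and $G[B]$ to find edges $aa'$ and $bb'$ with $v$ adjacent to $a,b$ and nonadjacent to $a',b'$, and observe that $a'\text-a\text-v\text-b\text-b'$ is an induced $P_5$ since $(A,B)$ is anticomplete. No issues.
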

	Here, a vertex $v\in V(G)$ is {\em pure} to $S\subset V(G)\setminus\{v\}$ if the pair $(\{v\},S)$ is pure in $G$.
	We also say that $v$ is {\em mixed} on $S$ if it has a neighbour and a nonneighbour in $S$.
	\begin{proof}
		[Proof of \cref{lem:mixed}]
		Suppose that there exists $v\in V(G)\setminus(A\cup B)$ mixed on both $A$ and $B$ in $G$.
		Since $G[A]$ is connected, it has an edge $uu'$ such that $v$ is adjacent to $u$ and nonadjacent to $u'$; and since $G[B]$ is connected, it has an edge $zz'$ such that $v$ is adjacent to $z$ and nonadjacent to $z'$.
		Then $u'\text-u\text-v\text-z\text-z'$ would be an induced $P_5$ in $G$, a contradiction. This proves \cref{lem:mixed}.
	\end{proof}
	
	\cref{lem:mixed} implies that in every connected $P_5$-free graph $G$ with two anticomplete vertex subsets $A,B$ such that $G[A],G[B]$ are connected, every minimal nonempty cutset $S$ separating $A,B$ has a partition into two subsets where one is complete to $A$ and the other is complete to $B$.
	Thus, if we also assume that $\chi(A),\chi(B)\ge\delta\cdot\chi(G)$ (for some fixed small $\delta>0$) and $G[A],G[B]$ are the components of $G\setminus S$ with largest $\chi$,
	then either $G$ contains a complete pair each with chromatic number at least $\delta\cdot\chi(G)$ (which satisfies the second outcome of \cref{thm:main}) or one of $\chi(A),\chi(B)$ is actually at least $(1-2\delta)\chi(G)$.
	Hence, we have just improved the `linear-$\chi$ versus linear-$\chi$' assumption to `linear-$\chi$ versus ``almost one''-$\chi$'.
	This observation plays a crucial role in the proof of the following main result of \cref{sec:highchi}.
	
	\begin{lemma}
		\label{lem:antino}
		Let $a\ge2$ and $c\in(0,2^{-9}]$. Then every $P_5$-free graph $G$ with $\chi(G)\ge c^{-a}$ contains either:
		\begin{itemize}
			\item a complete pair $(P,Q)$ with $\chi(P),\chi(Q)\ge \frac12c^4\chi(G)$;
			
			\item a complete pair $(X,Y)$ with $\chi(X)\ge y^{5a}\chi(G)$ and $\chi(Y)\ge (1-y)\chi(G)$ for some $y\in(0,\frac14)$; or
			
			\item an $(\eps,\chi)$-dense induced subgraph $F$ with $\chi(F)\ge \eps^3\chi(G)$ for some $\eps\in[\chi(G)^{-1/a},c]$.
		\end{itemize}
	\end{lemma}
	
	As can be seen, this lemma essentially replaces the linear-$\chi$ anticomplete outcome of \cref{lem:chirdl,lem:rdlchi} by two others on complete pairs, one with linear $\chi$ and the other with unbalanced $\chi$. The latter two are useful for \cref{thm:main}.
	
	Let us now sketch a proof of \cref{lem:antino}.
	We may assume the graph $G$ in question is connected; and by \cref{lem:chirdl} (or \cref{lem:rdlchi}), we may also assume that every induced subgraph of $G$ with linear chromatic number contains a linear-$\chi$ anticomplete pair (for which we call a `locally sparse' hypothesis). Our strategy will be using the aforementioned decomposition to iteratively obtain anticomplete pairs $(A,B)$ in $G$ such that the ratio $\min(\chi(A),\chi(B))/\chi(G)$ comes arbitrarily close to $1$.
	At the beginning, this ratio will be some small fixed constant $\delta>0$ resulting from \cref{lem:chirdl} (as above, with $\eps=2^{-9}$ say); and after the first step, the decomposition enables us to increase this ratio to $1-\xi$ for another fixed small $\xi>0$ for {\em all} connected $P_5$-free $G$ (in other words, `almost one' versus `almost one', not just linear versus `almost one' as in the above observation; see \cref{lem:1step}).
	Provided that none of the outcomes of  \cref{lem:antino} happens, we can upgrade the above `locally sparse' hypothesis for every such $G$; the upgraded hypothesis says that every induced subgraph $J$ of $G$ with $\chi(J)\ge(1-\xi^2)\chi(G)$ contains an anticomplete pair of vertex subsets each with chromatic number at least $(1-O(\xi))\chi(G)$.
	Now, starting from the second step, we will iteratively decrease $\xi$ by a power of $2$ (at each step for all $G$, again via the decomposition; see \cref{lem:incchi}) so that the connected $P_5$-free graph $G$ in consideration will be very close to being disconnected, in the sense that the minimal cutset separating the anticomplete pair $(A,B)$ in $G$ has chromatic number less than $\operatorname{poly}(\xi)\cdot\chi(G)$. But such a cutset is always nonempty; and so when $\xi$ drops below some negative power of $\chi(G)$, the second outcome of \cref{lem:antino} automatically holds with $y=\xi$ via \cref{lem:mixed}.
	Throughout the process, the $\chi$-dense induced subgraph $F$ in the third outcome of \cref{lem:antino} will be given by \cref{lem:chirdl} (first step) or will occur somewhere in the `central' cutset of the decomposition (starting from the second step, via a couple of induced $P_5$ chases);
	and in the unbalanced high-$\chi$ complete pair in the second outcome of \cref{lem:antino} (also the first outcome of \cref{thm:main}), the `lighter' part $X$ will mostly come from the set of nonneighbours of some vertex in the central cutset of the decomposition, and the `heavier' part $Y$ will be a connected component of the rest.
	
	\subsection{Chromatic density increment}
	\label{subsec:incre}
	The method of density increment is a fundamental idea in additive combinatorics (see for instance~\cite{MR4720301}), and has been an inspiration behind recent progress on long-standing open problems in graph theory such as the \erh{} conjecture \ref{conj:eh}~\cite{density3,density4,density6,density7} (under the term `iterative sparsification') and Hadwiger's conjecture~\cite{MR4576840,MR4868948}.
	Our third ingredient of the proof of \cref{thm:main} will be `chromatic density increment' (or `$\chi$-density increment' for short), which is an adaptation of this idea in polynomial $\chi$-boundedness through the lens of $\chi$-dense graphs.
	We will use this technique to convert the polynomially $\chi$-dense induced subgraph of the third outcome of \cref{lem:noanti} into a polynomially balanced complete blockade as in the second outcome of \cref{thm:main}.
	The general strategy is that given a $\chi$-dense graph $G$, we will attempt to pass through a sequence of successively $\chi$-denser induced subgraphs of $G$, with a polynomial `trade-off' between the chromatic number's shrinkage factor and the chromatic density parameter's decay.
	If we get stuck along the way then certain highly desirable outcomes occur, and if not then we obtain a clique of size some small power of $\chi(G)$ when the process terminates.
	This strategy is very much inspired by the method of iterative sparsification behind recent work on \erh{}; and indeed our chromatic density increment argument in this paper will be modelled after the two-round iterative sparsification argument on complements of $P_5$-free graphs in~\cite[Sections 5, 6, 7]{density7}.
	(We therefore strongly recommend the reader to revisit~\cite{density7} before going through this part of the~proof.)
	
	There are a number of nontrivial technical issues arising from this approach, however, mainly because chromatic number is generally not amenable to counting vertices.
	To explain, the inequality $\chi(A)+\chi(B)\ge\chi(A\cup B)$ for all disjoint vertex subsets $A,B$ in any given ($P_5$-free) graph $G$ is far from an equality in general;
	and so most of the counting-based steps in~\cite{density7} such as double counting, random sampling, and the `comb' lemma~\cite[Lemma 2.1]{MR4563865} are no longer applicable in our current setting of $\chi$-dense graphs.
	Thus, we have to develop a workaround in the first round (see \cref{sec:1incre}), and have to implement a different kind of chromatic density increment in the second round compared to its counterpart in~\cite[Section 7]{density7} (see \cref{sec:2incre}).
	To describe the difference in the second round, we need a couple more definitions. 
	For a graph $G$ with disjoint $A,B\subset V(G)$, we say that $B$ is {\em $(\eps,\chi)$-dense} to $A$ in $G$ if $\chi(A\setminus N_G(v))< \eps\cdot\chi(A)$ for all $v\in B$; and a blockade $(B_1,\ldots,B_k)$ in $G$ is {\em $(\eps,\chi)$-dense} if $B_j$ is $(\eps,\chi)$-dense to $B_i$ for all $i,j\in[k]$ with $i<j$ (these definitions first appeared in~\cite[Chapter 14]{2025thes} under the term {\em $\eps$-vivid}).
	The following lemma is the key component of the second round, and will be proved in \cref{sec:conv} by combining \cref{lem:chirdl}, the first round, and the \erh{} property of $P_5$~\cite{density7}, the last of which is critical in producing two separate~outcomes.
	
	\begin{lemma}
		\label{lem:waymid}
		There exists $b\ge2$ such that for every $\eps\in(0,\frac12]$, every $P_5$-free graph $G$ with $\chi(G)\ge\eps^{-b}$ contains an anticomplete or $(\eps,\chi)$-dense blockade of length at least $\eps^{-1}$ and mass at least $\eps^b\chi(G)$.
	\end{lemma}
	
	Since it is not clear to us whether $(\eps,\chi)$-dense blockades can always be converted into $(\operatorname{poly}(\eps),\chi)$-dense induced subgraphs, the second round will use \cref{lem:waymid} to run $\chi$-density increment on these blockades instead of $\chi$-dense induced subgraphs.
	It would also be interesting to upgrade the $(\eps,\chi)$-dense blockade outcome of this lemma into an $(\eps,\chi)$-dense induced subgraph outcome.
	If true, this would substantially strengthen \cref{lem:rdlchi,lem:chirdl}, and would be a full chromatic analogue of the polynomial R\"odl property of $P_5$.
	On a side note, it is not hard to deduce the \erh{} property of $P_5$ from \cref{lem:waymid} in return, by taking $\eps=\omega(G)^{-2}$.
	
	The final product of the two-round $\chi$-density increment argument is the following result on polynomially balanced complete blockades in $\chi$-dense $P_5$-free graphs.
	This turns the third outcome of \cref{lem:antino} into the second outcome of \cref{thm:main}, and will be proved in \cref{sec:2incre}.
	
	\begin{lemma}
		\label{lem:incre}
		There exists $a\ge2$ such that for every $\eps\in(0,2^{-32}]$ and every $(\eps,\chi)$-dense $P_5$-free graph $G$ with $\chi(G)\ge\eps^{-a}$, there exist an integer $k\ge\eps^{-1/16}$ and a complete blockade in $G$ of length $k$ and mass at least $k^{-a}\chi(G)$.
	\end{lemma}
	
	Provided \cref{lem:antino,lem:incre}, we can now complete the proof of \cref{thm:main} as follows.
	\begin{proof}
		[Proof of \cref{thm:main}, assuming \cref{lem:antino,lem:incre}]
		Let $a\ge 2$ be given by \cref{lem:incre}; we claim that $d:=32a+96$ suffices.
		To see this, let $c:=2^{-32}$. If $\chi(G)\le2^d$ then the second outcome holds with $k=2$ since $G$ has at least one edge.
		Thus we may assume $\chi(G)\ge 2^d\ge c^{-a-3}$.
		By \cref{lem:antino} with $a+3$ replacing $a$, the graph $G$ contains either:
		\begin{itemize}
			\item a complete pair $(P,Q)$ with $\chi(P),\chi(Q)\ge \frac12c^4\chi(G)=2^{-129}\chi(G)\ge 2^{-d}\chi(G)$;
			
			\item a complete pair $(X,Y)$ with $\chi(X)\ge y^{5(a+3)}\chi(G)\ge y^d\chi(G)$ and $\chi(Y)\ge (1-y)\chi(G)$ for some $y\in(0,\frac14)$; or
			
			\item an $(\eps,\chi)$-dense induced subgraph $F$ with $\chi(F)\ge \eps^3\chi(G)$ for some $\eps\in[\chi(G)^{-1/(a+3)},c]$.
		\end{itemize}
		
		If the first or second bullet holds then the theorem holds.
		Thus we may assume that the third bullet holds.
		Since $\chi(F)\ge \eps^3\chi(G)\ge \eps^{-a}$,
		the choice of $a$ gives an integer $k \ge\eps^{-1/16}\ge c^{-1/16}\ge 2$ and a complete blockade in $F$ of length $k$ and mass at least 
		$$k^{-a}\chi(F)\ge k^{-a}\eps^3\chi(G)\ge  k^{-a-48}\chi(G)\ge k^{-d}\chi(G).$$
		This verifies the second outcome and in turn proves \cref{thm:main}.
	\end{proof}
	
	To provide an exposition for $\chi$-density increment, in \cref{sec:c5} we will use this method to give a relatively short proof that a variant of \cref{lem:incre} holds for $\{P_5,C_5\}$-free graphs with $a=40$ (see \cref{lem:ext}).
	Via \cref{lem:antino}, this yields a variant of \cref{thm:main} for these graphs with $d=40$ (see \cref{thm:mainc5}), which in turn gives the $\chi$-binding function $x\mapsto x^{40}$ for $\{P_5,C_5\}$-free graphs.
	
	\subsection{Remarks on measures and algorithmic aspects}
	\label{subsec:meas}
	For a graph $G$, a {\em measure} on $G$ is a function $\mu\colon 2^{V(G)}\to\mab R_{\ge0}$ satisfying the following axioms:
	
	\begin{itemize}
		\item $\mu(\emptyset)=0$ and $\mu(\{v\})=1$ for all $v\in V(G)$;
		
		\item $\max(\mu(A),\mu(B))\le \mu(A\cup B)$ for all $A,B\subset V(G)$, where equality holds if $A,B$ are disjoint and $A$ is anticomplete to $B$ in $G$; and
		
		\item $\mu(A\cup B)\le \mu(A)+\mu(B)$ for all $A,B\subset V(G)$, where equality holds if $A,B$ are disjoint and $A$ is complete to $B$ in $G$.
	\end{itemize}
	(In particular, the second axiom implies that $\mu$ is increasing monotone with respect to inclusion.)
	For each such measure $\mu$, let $\mu(F):=\mu(V(F))$ for every induced subgraph $F$ of $G$.
	Then it is not hard to see that clique number, fractional chromatic number, and chromatic number are all measures on any given graph.
	Moreover, clique number and chromatic number, respectively, are the `weakest' and `strongest' measures in the sense that $\omega(G)\le\mu(G)\le\chi(G)$ for all graphs $G$ and all measures $\mu$ on $G$.
	Indeed, the first inequality is evident from the third axiom (including its equality case) and the monotone property of measures.
	To see the second inequality, partition $V(G)$ into stable sets $V_1,\ldots,V_{\chi(G)}$ in $G$, and observe that $\mu(G)\le \sum_{i=1}^{\chi(G)}\mu(V_i)=\chi(G)$; here the inequality follows from the third axiom, and the equality follows from the first and the equality case of the second.
	
	Our proof of \cref{thm:main} only relies on the above three `measure' properties of $\chi$ to explore the chromatic density aspect of $P_5$-free graphs.
	In fact, the proof has only one place where the equality case of the third axiom is used (in the proof of \cref{claim:1dense11}); and this can be avoided since we can strengthen the notion of `maximal components' (defined prior to \cref{thm:gas}) by requiring the component $C$ in the definition to have maximal cardinality in addition. Nevertheless, to keep our proof as natural as possible and avoid unnecessary technical details, we decided to employ the third axiom's equality case for $\chi$.
	Our proof of \cref{thm:main} also does not involve clique number, which is perhaps another notable difference compared to previous papers in (polynomial) $\chi$-boundedness.
	Thus, because one can approximate (in fact, even compute precisely) the fractional chromatic number of $P_5$-free graphs in polynomial time (since {\sc Maximum Weight Independent Set} is poly-time solvable on these graphs~\cite{MR3376403}), the algorithmic nature of our argument yields a constructive version of \cref{thm:main} for fractional chromatic number (see \cref{thm:mainaglo}), which gives a poly-time algorithm for polynomially approximating {\sc Maximum Clique} on $P_5$-free graphs (see~\cref{thm:algop5}).
	
	\subsection{Organisation}
	The rest of this paper is organised as follows.
	In \cref{sec:chirdl} we prove \cref{lem:chirdl}. \cref{sec:highchi} presents a proof of \cref{lem:antino} by decomposing $P_5$-free graphs along high-$\chi$ anticomplete pairs.
	In \cref{sec:c5}, we present a short proof using chromatic density increment that $\{P_5,C_5\}$-free graphs are polynomially $\chi$-bounded.
	\cref{sec:1incre} runs the first round of $\chi$-density increment on $P_5$-free graphs.
	In \cref{sec:conv}, we deduce \cref{lem:waymid} as a product of the first round, \cref{lem:chirdl}, and the \erh{} property of $P_5$.
	\cref{sec:2incre} then employs \cref{lem:waymid} to execute the second round of $\chi$-density increment and in turn prove \cref{lem:incre}. We discuss the algorithmic aspects of chromatic density in \cref{sec:algo}.
	We then conclude the paper with several remarks in \cref{sec:concl}.
	
	\section{Chromatic quasirandomness}
	\label{sec:chirdl}
	This section provides a proof of \cref{lem:chirdl}, a `chromatic quasirandomness' analogue of R\"odl's theorem \ref{thm:rodl} concerning linear-$\chi$ pure pairs versus linearly $\chi$-dense induced subgraphs in $P_5$-free graphs.
	The proof employs the classical `Gy\'arf\'as path' argument~\cite{MR951359} that implies every $P_t$-free graph contains a vertex whose neighbourhood has linear chromatic number (and so such graphs are not `sparse' in the chromatic sense).
	Since the linear lower bound resulting from this theorem will be a crucial input of our argument, we provide a short proof for completeness.
	In what follows, for a graph $G$, a connected component $C$ of $G$ is {\em maximal} if $\chi(C)=\chi(G)$.
	
	\begin{theorem}
		[Gy\'arf\'as]
		\label{thm:gas}
		For each integer $t\ge4$,
		every $P_t$-free graph $G$ with $\chi(G)\ge2$ has a vertex $v$ with $\chi(N_G(v))\ge\frac1{t-2}\chi(G)$.
		Hence, if $G$ is $P_5$-free then it has a vertex $v$ with $\chi(N_G(v))\ge\frac13\chi(G)$.
	\end{theorem}
	\begin{proof}
		We may assume that $G$ is connected. Let $v_0\in V(G)$. If $N_G[v_0]=V(G)$ then we are done because $\chi(N_G(v_0))\ge\chi(G)-1\ge\frac12\chi(G)\ge\frac1{t-2}\chi(G)$ since $\chi(G)\ge2$ and $t\ge4$. Thus we may assume $N_G[v_0]\subsetneq V(G)$. Then there exists $s\ge2$ maximal such that there is a partition $(L_0,L_1,\ldots,L_s)$ of $V(G)$ into nonempty subsets satisfying:
		\begin{itemize}
			\item $L_0=\{v_0\}$ and $L_1=N_G(v_0)$;
			
			\item for every $i\in[s-1]$, the vertex set of each component of $G[L_i]$ is a subset of the neighbourhood of some vertex in $L_{i-1}$; and
			
			\item for every $i\in[s]$ and every component $C$ of $G[L_i]$, there is an induced path $v_0^C\text-v_1^C\text-\cdots\text-v_{i-1}^C$ in $G$ such that $v_{j-1}^C\in L_{j-1}$ for all $j\in[i]$ and $v_{i-1}^C$ is the only vertex in this path having a neighbour in $V(C)$. 
		\end{itemize}
		(These conditions clearly hold for $s=2$ by taking $L_2= V(G)\setminus N_G[v_0]$.)
		
		We claim that the second bullet above also holds for $i=s$. Suppose not. Let $\mac C$ be the set of components of $G[L_s]$ each with vertex set not contained in the neighbourhood of any vertex in $L_{s-1}$. For each $C\in\mac C$ let $v_0^C\text-v_1^C\text-\cdots\text-v_{s-1}^C$ be an induced path given by the third bullet above; then $V(C)\setminus N_G(v_{s-1}^C)$ is nonempty. Let $L_{s+1}:=\bigcup_{C\in\mac C}(V(C)\setminus N_G(v_{s-1}^C))$; then it is not hard to check that $(L_0,\ldots,L_{s-1},L_s\setminus L_{s+1},L_{s+1})$ violates the maximality of $s$. Thus every component of $L_s$ also has vertex set contained in the neighbourhood of some vertex in $L_{s-1}$.
		
		Now, the third bullet above with $i=s$ implies that $G$ has an induced $P_{s+1}$; and so $s\le t-2$ since $G$ is $P_t$-free.
		Since $N_G[v_0]\subsetneq V(G)$, we have $\chi(G\setminus L_1)=\chi(L_0\cup(L_2\cup\cdots\cup L_s))=\chi(L_2\cup\cdots\cup L_s)$; and so $\chi(G)\le\chi(L_1)+\chi(G\setminus L_1)=\chi(L_1)+\chi(L_2\cup\cdots\cup L_s)$, which gives some $i\in[s]$ with $\chi(L_i)\ge \frac1s\chi(G)\ge\frac1{t-2}\chi(G)$.
		The conclusion then follows since each component of $L_i$ has vertex set contained in the neighbourhood of some vertex in $L_{i-1}$.
		This proves \cref{thm:gas}.
	\end{proof}
	
	We now prepare for the proof of \cref{lem:chirdl}.
	First, we show how to create a $\chi$-dense induced subgraph in a $P_5$-free graph under several favourable conditions.
	\begin{lemma}
		\label{lem:bip}
		Let $\eps\in(0,\frac14]$, and let $G$ be a $P_5$-free graph. Let $v\in V(G)$, let $A\subset N_G(v)$, and let $B\subset V(G)\setminus N_G[v]$, such that $B$ is $(\eps,\chi)$-dense to $A$. Then either:
		\begin{itemize}
			\item there exist $X\subset A$ and $Y\subset B$ such that $\chi(X)\ge\eps\cdot\chi(A)$, $\chi(Y)\ge\eps\cdot\chi(B)$, and $(X,Y)$ is pure;
			
			\item there exists $C\subset A$ with $\chi(C)\ge\frac12\chi(A)$ such that $G[C]$ is $(4\eps,\chi)$-dense; or
			
			\item there exists $D\subset B$ with $\chi(D)\ge\frac12\chi(B)$ such that $G[D]$ is $(4\eps,\chi)$-dense. 
		\end{itemize}
	\end{lemma}
	\begin{proof}
		Assume that the first outcome fails.
		Let $C:=\{u\in A:\chi(A\setminus N_G[u])\le 3\eps\cdot\chi(A)\}$ and $D:=\{z\in B:\chi(B\setminus N_G[z])\le 2\eps\cdot\chi(B)\}$. We claim that:
		
		\begin{figure}[ht]
			\centering
			
			\begin{tikzpicture}[scale=0.6,auto=left]
				
				\draw [fill=white] (1.5,0) circle (1.25cm);
				
				\draw [fill=white] (1.5,-4.25) circle (1.25cm);
				
				\draw [rounded corners] (-3,-2) rectangle (4,2);
				\draw [rounded corners] (-3,-6.25) rectangle (4,-2.25);
				\node[] at (-3.75,0) {$A$};
				\node[] at (-3.75,-4.25) {$B$};
				
				\node[inner sep=1.5pt, fill=black,circle,draw] (z) at ({-1.5},{-4.25}) {};
				\node[xshift=-0.3cm] at (z) {$z$};
				\node[] at (3.25,-4.25) {$E$};
				\node[] at (3.25,0) {$X$};
				
				\node[inner sep=1.5pt, fill=black,circle,draw] (v) at ({0.5},{3.5}) {};
				\node[xshift=0.3cm] at (v) {$v$};
				
				\node[inner sep=1.5pt, fill=black,circle,draw] (u) at ({-1.5},{0}) {};
				\node[xshift=-0.3cm] at (u) {$u$};
				\node[inner sep=1.5pt, fill=black,circle,draw] (v') at ({1},{-4.5}) {};
				\node[yshift=-0.25cm] at (v') {$v'$};
				\node[inner sep=1.5pt, fill=black,circle,draw] (v'') at ({2},{-4}) {};
				\node[xshift=0.05cm,yshift=-0.25cm] at (v'') {$v''$};
				\node[inner sep=1.5pt, fill=black,circle,draw] (u') at ({1.5},{0}) {};
				\node[xshift=0.35cm,yshift=0.05cm] at (u') {$u'$};
				\draw[-] (u) -- (v) -- (u') -- (v') -- (v'');
				\draw[dashed] (u) -- (u') -- (v'') -- (u) --  (v');

				\draw [fill=white] (12.5,0) circle (1.25cm);
				\node[inner sep=1.5pt, fill=black,circle,draw] (z0) at ({9.5},{-4.25}) {};
				
				\draw [fill=white] (12.5,-4.25) circle (1.25cm);
				
				\draw [rounded corners] (8,-2) rectangle (15,2);
				\draw [rounded corners] (8,-6.25) rectangle (15,-2.25);
				\node[] at (7.25,0) {$A$};
				\node[] at (7.25,-4.25) {$B$};
				\node[color=white] at (15.75,0) {$A$};
				\node[color=white] at (15.75,-4.25) {$B$};
				
				\node[xshift=-0.3cm] at (z0) {$z$};
				\node[] at (14.25,-4.25) {$Y$};
				\node[] at (14.25,0) {$X$};
				
				\node[inner sep=1.5pt, fill=black,circle,draw] (v0) at ({11.5},{3.5}) {};
				\node[xshift=0.3cm] at (v0) {$v$};
				
				\node[inner sep=1.5pt, fill=black,circle,draw] (u0) at ({9.5},{0}) {};
				\node[xshift=-0.3cm] at (u0) {$u$};
				\node[inner sep=1.5pt, fill=black,circle,draw] (z') at ({12.5},{-4.25}) {};
				\node[xshift=0.35cm,yshift=0.05cm] at (z') {$z'$};
				\node[inner sep=1.5pt, fill=black,circle,draw] (u'0) at ({12.5},{0}) {};
				\node[xshift=0.35cm,yshift=0.05cm] at (u'0) {$u'$};
				\draw[-] (z0) -- (u'0) -- (v0) -- (u0) -- (z');
				\draw[dashed] (u0) -- (u'0) -- (z') -- (z0) -- (u0);
			\end{tikzpicture}
			
			\caption{Proof of \cref{claim:bip}.}
			\label{fig:bip}
		\end{figure}
		
		\begin{claim}
			\label{claim:bip}
			$A\setminus C$ is complete to $B\setminus D$.
		\end{claim}

		\begin{subproof}
			Suppose not; then there are $u\in A\setminus C$ and $z\in B\setminus D$ such that $u,z$ are nonadjacent.
			Let $X:=(A\setminus N_G[u])\cap N_G(z)$
			and $Y:=(B\setminus N_G[z])\cap N_G(u)$. 
			The hypothesis implies that
			\[
			\chi(X)\ge\chi(A\setminus N_G[u])-\chi(A\setminus N_G(z))
			\ge 3\eps\cdot\chi(A)-\eps\cdot\chi(A)=2\eps\cdot\chi(A).\]

			Let $E\subset (B\setminus N_G[z])\setminus Y=B\setminus(N_G[z]\cup N_G(u))$ satisfy $G[E]$ is a maximal component of $G[(B\setminus N_G[z])\setminus Y]$.
			If there exists $u'\in X$ mixed on $E$ then $E$ has an edge $v'v''$ for which $u'$ is adjacent to $v'$ and nonadjacent to $v''$; but then $u\text-v\text-u'\text-v'\text-v''$ would be an induced $P_5$ in $G$, a contradiction.
			(See the left-hand side of \cref{fig:bip}.)
			Hence there is a partition $X=X_1\cup X_2$ such that $X_1$ is complete to $E$ and $X_2$ is anticomplete to $E$.
			Thus $\max(\chi(X_1),\chi(X_2))\ge\frac12\chi(X)\ge \eps\cdot\chi(A)$;
			and so $\chi(E)\le \eps\cdot\chi(B)$ since the first outcome of the lemma fails.
			Therefore
			\[\chi(Y)\ge\chi(B\setminus N_G[z])-\chi(E)
			\ge 2\eps\cdot\chi(B)-\eps\cdot\chi(B)=\eps\cdot\chi(B).\] 
			Thus, since the first outcome of the lemma fails, there are nonadjacent vertices $u'\in X$ and $z'\in Y$;
			but then $z\text-u'\text-v\text-u\text-z'$ would be an induced $P_5$ in $G$, a contradiction.
			(See the right-hand side of \cref{fig:bip}.)
			This proves \cref{claim:bip}.
		\end{subproof}
		Now, since the first outcome of the lemma fails, either $\chi(A\setminus C)\le\eps\cdot\chi(A)$ or $\chi(B\setminus D)\le\eps\cdot\chi(B)$; and so either $\chi(C)\ge(1-\eps)\chi(A)\ge\frac34\chi(A)$ or $\chi(D)\ge(1-\eps)\chi(B)\ge\frac34\chi(B)$.
		Thus one of the last two outcomes holds. This proves \cref{lem:bip}.
	\end{proof}
	We are now ready to prove \cref{lem:chirdl}, which we restate here for the reader's convenience.
	\begin{lemma}
		\label{lem:chirdl1}
		Let $\eps\in(0,1)$ and $\delta\in(0,2^{-7}\eps^2]$. Then every $P_5$-free graph $G$ contains either:
		\begin{itemize}
			\item a pure pair $(A,B)$ with $\chi(A),\chi(B)\ge \delta\cdot\chi(G)$; or
			
			\item an $(\eps,\chi)$-dense induced subgraph $F$ with $\chi(F)\ge \delta\cdot\chi(G)$.
		\end{itemize}
	\end{lemma}
	\begin{proof}
		Suppose that none of the outcomes holds.
		Then $\chi(G)\ge\delta^{-1}\ge 2^7$ since the second outcome fails.
		Our plan is to repeatedly apply \cref{thm:gas} to obtain a vertex $v$ and disjoint $A,B\subset V(G)\setminus\{v\}$ with linear chromatic number, such that $v$ is complete to $A$ and anticomplete to $B$, and $B$ is $\chi$-dense to $A$. Then the conclusion follows by \cref{lem:bip}. We first extract $v$ as follows.
		
		\begin{claim}
			\label{claim:nicev}
			There exists $v\in V(G)$ with $\chi(N_G(v))\ge\frac16\chi(G)$ and $\chi(G\setminus N_G[v])\ge\frac12\eps\cdot\chi(G)$.
		\end{claim}
		\begin{subproof}
			Let $Z:=\{u\in V(G):\chi(G\setminus N_G[u])\ge\frac12\eps\cdot\chi(G)\}$.
			If $\chi(G\setminus Z)\ge\frac12\chi(G)$ then $G\setminus Z$ is $(\eps,\chi)$-dense; and so the second outcome of the lemma holds, contrary to our supposition.
			Thus $\chi(G\setminus Z)\le\frac12\chi(G)$; and so $\chi(Z)\ge\frac12\chi(G)\ge2$.
			By \cref{thm:gas} applied to $G[Z]$, there exists $v\in Z$ with $\chi(Z\cap N_G(v))\ge\frac13\chi(Z)\ge \frac16\chi(G)$.
			This proves \cref{claim:nicev}.
		\end{subproof}
		
		For $v$ given by \cref{claim:nicev}, let
		\[A:=\{u\in N_G(v):\chi(G\setminus(N_G(u)\cup N_G(v)))< \delta\cdot\chi(G)\}.\]
		We continue using the $P_5$-freeness of $G$ to deduce that $A$ is high-chromatic, as follows.
		\begin{claim}
			\label{claim:chirdl2}
			$\chi(A)\ge\frac1{8}\chi(G)$.
		\end{claim}
		\begin{subproof}
			Let $P:=N_G(v)\setminus A$. Since $\chi(N_G(v))\ge\frac16\chi(G)$, it suffices to prove $\chi(P)\le\frac1{24}\chi(G)$.
			Suppose not; then since the second outcome of the lemma fails, there exists $u\in P$ with $\chi(P\setminus N_G[u])\ge\frac1{24}\eps\cdot\chi(G)$.
			Let $C$ be a maximal component of $G\setminus(N_G(u)\cup N_G(v))$.
			If there exists $z\in P\setminus N_G[u]$ mixed on $V(C)$, then $C$ has an edge $u'v'$ with $z$ adjacent to $u'$ and nonadjacent to $v'$ in $G$;
			but then $u\text-v\text-z\text-u'\text-v'$ would be an induced $P_5$ in $G$, a contradiction.
			(See the left-hand side of \cref{fig:bip} for a reference, with $z,u',v'$ replacing $u',v',v''$ respectively.)
			Hence, $P\setminus N_G[u]$ admits a partition $(X,Y)$ such that $C$ is complete to $X$ and anticomplete to $Y$.
			Thus $\chi(X),\chi(Y)\le \delta\cdot\chi(G)$ since the first outcome of the lemma fails;
			and so $\chi(P\setminus N_G[u])\le 2\delta\cdot\chi(G)$.
			But then the choice of $u$ yields $2\delta\ge\frac1{24}\eps$, contrary to $\delta\le2^{-7}\eps^2$. This proves \cref{claim:chirdl2}.
		\end{subproof}
		
		Now, let
		\[B:=\{z\in V(G)\setminus N_G[v]: \chi(A\setminus N_G(z))< (\eps/4)\cdot\chi(A)\}.\]
		We apply \cref{lem:bip} to bound $\chi(B)$ via the following claim.
		\begin{claim}
			\label{claim:chirdl3}
			$\chi(B)\le \frac14\eps\cdot\chi(G)$.
		\end{claim}
		\begin{subproof}
			Suppose not. By \cref{lem:bip} with $(\eps,v,A,B)=(\frac14\eps,v,A,B)$, either:
			\begin{itemize}
				\item there exist $X\subset A$ and $Y\subset B$ such that $\chi(X)\ge\frac14\eps\cdot\chi(A)$,  $\chi(Y)\ge\frac14\eps\cdot\chi(B)$, and $(X,Y)$ is pure;
				
				\item there exists $C\subset A$ with $\chi(C)\ge\frac12\chi(A)$ such that $G[C]$ is $(\eps,\chi)$-dense; or
				
				\item there exists $D\subset B$ with $\chi(D)\ge\frac12\chi(B)$ such that $G[D]$ is $(\eps,\chi)$-dense.
			\end{itemize}
			
			If the first bullet holds then the first outcome of the lemma holds since $\min(\frac14\eps\cdot\chi(A),\frac14\eps\cdot\chi(B))\ge\min(\frac1{32}\eps,\frac1{16}\eps^2)\cdot\chi(G)\ge\delta\cdot\chi(G)$.
			If the second bullet or the third bullet holds then the second outcome of the lemma holds since $\min(\frac12\chi(A),\frac12\chi(B))\ge \min(\frac1{24},\frac18\eps)\cdot\chi(G)\ge\delta\cdot\chi(G)$.
			These contradictions together verify \cref{claim:chirdl3}.
		\end{subproof}
		Now, let $Q:=V(G)\setminus(B\cup N_G[v])$. By the choice of $v$, \cref{claim:chirdl3} implies
		$$\chi(Q)\ge \chi(G\setminus N_G[v])-\chi(B)\ge (\eps/4)\cdot \chi(G)\ge (\eps/4)\delta^{-1}\ge\eps^{-1}\ge 2.$$
		Hence \cref{thm:gas} gives some $v'\in Q$ with $\chi(N_{G[Q]}(v'))\ge \frac13\chi(Q)\ge\frac1{12}\eps\cdot\chi(G)$.
		Let $B':=A\setminus N_G(v')$ and $A':=N_{G[Q]}(v')$; then $\chi(A')\ge\frac1{12}\eps\cdot\chi(G)$. Since $\delta\le 2^{-7}\eps^2$, we have
		\begin{gather*}
			\chi(B')\ge (\eps/4)\cdot\chi(A)\ge2^{-5}\eps\cdot\chi(G)\ge \delta\cdot\chi(G),\quad\text{and}\\
			\chi(A'\setminus N_G(u))\le \delta\cdot\chi(G)\le 12\eps^{-1}\delta\cdot\chi(A')<(\eps/4)\cdot\chi(A')\quad\text{for all $u\in B'$.}
		\end{gather*} 
		Thus $B'$ is $(\frac14\eps,\chi)$-dense to $A'$; and so by \cref{lem:bip} with $(\eps,v,A,B)=(\frac14\eps,v',A',B')$, either:
		\begin{itemize}
			\item there are $X\subset A'$ and $Y\subset B'$ such that $\chi(X)\ge\frac14\eps\cdot\chi(A')$,  $\chi(Y)\ge\frac14\eps\cdot\chi(B')$, and $(X,Y)$ is pure;
			
			\item there exists $C\subset A'$ with $\chi(C)\ge\frac12\chi(A')$ such that $G[C]$ is $(\eps,\chi)$-dense; or
			
			\item there exists $D\subset B'$ with $\chi(D)\ge\frac12\chi(B')$ such that $G[D]$ is $(\eps,\chi)$-dense.
		\end{itemize}
		
		If the first bullet holds then the first outcome of the lemma holds since $\min(\frac14\eps\cdot\chi(A'),\frac14\eps\cdot\chi(B'))\ge\min(\frac1{48}\eps^2,2^{-7}\eps^2)\cdot\chi(G)\ge\delta\cdot\chi(G)$; note that $\delta\le 2^{-7}\eps^2$.
		If the second bullet or the third bullet holds then the second outcome of the lemma holds since $\min(\frac12\chi(A'),\frac12\chi(B'))\ge \min(\frac1{24}\eps,2^{-6}\eps)\cdot\chi(G)\ge\delta\cdot\chi(G)$.
		These contradictions together verify \cref{lem:chirdl1}.
	\end{proof}

	\section{Capturing $\chi$-dense subgraphs by growing high-$\chi$ anticomplete pairs}
	\label{sec:highchi}
	Our goal in this section is to prove \cref{lem:antino} by decomposing $P_5$-free graphs along high-$\chi$ anticomplete pairs.
	As sketched in \cref{subsec:highchi}, we will carry this out under the `locally sparse' hypothesis that every linear-$\chi$ induced subgraph contains a linear-$\chi$ anticomplete pair.
	To formalise this hypothesis, for $0\le p\le q$, we say that a graph $G$ is {\em $(p,q)$-sparse} if every induced subgraph $F$ of $G$ with $\chi(F)\ge q$ contains an anticomplete pair $(P,Q)$ with $\chi(P),\chi(Q)\ge p$.
	We also say that $S\subset V(G)$ {\em separates} nonempty and disjoint $B_1,\ldots,B_k\subset V(G)\setminus S$ in $G$ if $G\setminus S$ has no path between $B_i$  and $B_j$ for all distinct $i,j\in[k]$; and $S$ is a {\em cutset} of $G$ if $k\ge2$.
	The following lemma is the main technical step in the proof of \cref{lem:antino}, which introduces and applies the decomposition mentioned in \cref{subsec:highchi}.
	
	\begin{lemma}
		\label{lem:anti}
		Let $\eps\in(0,\frac14]$, let $G$ be a $P_5$-free graph, and let $0<p\le q\le(1-\eps^2)\chi(G)$. 
		If $G$ is $(p,q)$-sparse, then it contains either:
		\begin{itemize}
			\item an anticomplete pair $(A,B)$ with $\chi(A)\ge q-2\eps^4\chi(G)$ and $\chi(B)\ge(1-\eps^2)\chi(G)$;
			
			\item a complete pair $(X,Y)$ with $\chi(X)\ge \eps^4\chi(G)$ and $\chi(Y)\ge p$; or
			
			\item an $(\eps,\chi)$-dense induced subgraph with chromatic number at least $2\eps^3\chi(G)$.
		\end{itemize}
	\end{lemma}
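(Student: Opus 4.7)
Plan: proceed by contradiction; assume outcome~2 (complete pair) and outcome~3 ($(\eps,\chi)$-dense subgraph) both fail, and construct outcome~1. The workhorse is the following cutset step: given any connected anticomplete pair $(A,B)$ with $\chi(A),\chi(B)\ge p$, let $S$ be a minimum $A$--$B$ separator in $G$ and let $A^+, B^+$ be the components of $G\setminus S$ containing $A,B$. By \cref{lem:mixed} applied to the connected anticomplete pair $(A^+,B^+)$, every vertex of $V(G)\setminus(A^+\cup B^+)$ is pure to one of $A^+,B^+$; minimality of $S$ forces each $v\in S$ (which has a neighbour on each side) to be complete to one of them, partitioning $S=S_A\sqcup S_B$. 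If $\chi(S_A)\ge\eps^4\chi(G)$ then $(S_A,A^+)$ is a complete pair with $\chi(A^+)\ge p$ giving outcome~2, and symmetrically for $S_B$. So we may assume $\chi(S)<2\eps^4\chi(G)$, and since the components of $G\setminus S$ other than $B^+$ are all anticomplete to $B^+$, we have $N_G(B^+)\setminus B^+\subseteq S$ and hence
\[\chi\bigl(V(G)\setminus N_G[B^+]\bigr)\ge\chi(G)-\chi(B^+)-2\eps^4\chi(G).\]

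The hypothesis $q\le(1-\eps^2)\chi(G)<\chi(G)$ combined with $(p,q)$-sparsity yields an initial connected anticomplete pair with $\chi(A_0),\chi(B_0)\ge p$ (take chromatically-maximum connected components of the pair produced by sparsity), so the collection of valid pairs is nonempty. Among all valid connected anticomplete pairs $(A,B)$, I would choose one extremally, for instance maximising $\chi(B)$ with ties broken by $\chi(A)$. If after the cutset step one has $\chi(B^+)\ge(1-\eps^2)\chi(G)$ and $\chi(V(G)\setminus N_G[B^+])\ge q-2\eps^4\chi(G)$, then a chromatically-maximum component of $V(G)\setminus N_G[B^+]$ serves as the $A$-side of outcome~1. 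The display above shows this is automatic in the regime where $q$ is not too close to $(1-\eps^2)\chi(G)$.

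The opposite regime, $q$ close to $(1-\eps^2)\chi(G)$ (so that both requirements of outcome~1 are chromatically demanding), asks for \emph{both} sides of the anticomplete pair to be chromatically large; here I would iterate the cutset step inside the subgraph $G[V(G)\setminus N_G[B^+]]$, invoking fresh applications of $(p,q)$-sparsity to feed anticomplete material into the $A$-side. If the extremisation forbids pushing $\chi(B^+)$ up to $(1-\eps^2)\chi(G)$, then the ``missing'' $\eps^2\chi(G)$ chromatic mass lies in $A^+\cup O$ (where $O$ is the union of the remaining components of $G\setminus S$), which far exceeds the $<2\eps^4\chi(G)$ budget of $S$. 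At this point I would run a Gy\'arf\'as-path argument in the spirit of the proof of \cref{lem:chirdl} (via \cref{thm:gas}) on a suitable induced subgraph of $A^+\cup O$, combining once more with \cref{lem:mixed} against $B^+$ to force either a vertex whose non-neighbourhood hosts an $(\eps,\chi)$-dense induced subgraph of chromatic number at least $2\eps^3\chi(G)$ (outcome~3), or a connected enlargement of the $B$-side violating extremality.

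The main obstacle is the quantitative orchestration of the four thresholds $\eps^4\chi(G)$, $2\eps^3\chi(G)$, $q-2\eps^4\chi(G)$, and $(1-\eps^2)\chi(G)$, together with finding the correct extremal invariant that cleanly handles both regimes of $q$. I expect the proof to require several alternating passes of the cutset and Gy\'arf\'as-path steps (mirroring the ``central cutset plus several connected components with chromatic constraints'' decomposition described in \cref{subsec:highchi}) rather than a single pass, with the delicate point being that each failure mode of one outcome must deterministically push the argument toward a different outcome.
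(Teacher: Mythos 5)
Your opening cutset step is essentially the paper's first move (its \cref{claim:sparse}): take an extremal anticomplete pair with both sides connected, a minimal separator, apply \cref{lem:mixed} so each separator vertex is complete to one side, and use the failure of the complete-pair outcome to get $\chi(S)<2\eps^4\chi(G)$. But from there the proposal has a genuine gap, and one of its bridging claims is wrong. The displayed inequality $\chi(V(G)\setminus N_G[B^+])\ge\chi(G)-\chi(B^+)-2\eps^4\chi(G)$ becomes weakest exactly when $\chi(B^+)\ge(1-\eps^2)\chi(G)$, where it yields only about $\eps^2\chi(G)$; this meets the demand $q-2\eps^4\chi(G)$ only when $q\lesssim\eps^2\chi(G)$, not ``when $q$ is not too close to $(1-\eps^2)\chi(G)$''. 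Moreover you never derive $\chi(B^+)\ge(1-\eps^2)\chi(G)$: a single minimal separator gives an anticomplete pair $(B^+,\,V(G)\setminus(B^+\cup S))$ one of whose sides has chromatic number at least $(1-2\eps^4)\chi(G)$, but the other side is only guaranteed $\ge p$, and nothing in your extremal choice (maximising $\chi(B)$, then $\chi(A)$) forces it up to $q-2\eps^4\chi(G)$. The hard regime is then deferred to ``iterate the cutset step'', ``run a Gy\'arf\'as-path argument in the spirit of \cref{lem:chirdl}'', and ``several alternating passes'', which is precisely where the lemma's content lies, so the proposal is a plan rather than a proof.

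What is missing is the paper's specific mechanism. It fixes a set $A$ with $\chi(A)\ge\max(p,q-2\eps^4\chi(G))$ and builds a maximal decomposition $(A,D,B_1,\ldots,B_k,E)$ in which $D$ is a central cutset, each $G[B_i]$ is connected with $\chi(B_i)\ge p$, every vertex of $D$ has a neighbour in $\bigcup_i B_i$, and $\chi(E)<p$; maximality of $k$ (re-applying the cutset claim inside $G[A]$) forces $\chi(A)<q$. The $(\eps,\chi)$-dense outcome does not come from a Gy\'arf\'as-path argument at all: it comes from two $P_5$-chases through the several components $B_i$ (the paper's \cref{claim:2k2,claim:smalls}) showing that every vertex of the set $S\subset D$ mixed on $A$ has non-neighbourhood within $S$ of chromatic number at most $2\eps^4\chi(G)$, so either $\chi(S)\le2\eps^3\chi(G)$ or $G[S]$ itself is $(\eps,\chi)$-dense. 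Since the vertices of $D$ complete to $A$ have chromatic number at most $\eps^4\chi(G)$ (failure of outcome 2), $A$ is anticomplete to all of $V(G)\setminus(A\cup R\cup S)$, a set of chromatic number at least $(1-2\eps^3-\eps^4)\chi(G)\ge(1-\eps^2)\chi(G)>\chi(A)$, which is exactly how the heavy side of outcome 1 appears. Your sketch contains neither the multi-component decomposition, nor the bound $\chi(A)<q$, nor the $P_5$-chases that make the mixed part of the cutset either low-chromatic or $(\eps,\chi)$-dense, so the two regimes you identify are never actually closed.
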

	(In application we will let $p$ vary and always fix $q=(1-\eps^2)\chi(G)$, but for better clarity we decided to make $q$ an independent variable in this lemma.)
	\begin{proof}
		We may assume that $G$ is connected. Also, assume that the last two outcomes do not hold.
		To facilitate our decomposition, we begin with the following application of \cref{lem:mixed}:
		\begin{claim}
			\label{claim:sparse}
			Every connected induced subgraph $F$ of $G$ with $\chi(F)\ge q$ contains a minimal nonempty cutset $Z$ separating two vertex subsets $A,B$ with $\chi(A)\ge \max(p,q-2\eps^4\chi(G))$ and $\chi(B)\ge p$, such that $F[A],F[B]$ are two of the components of $F\setminus Z$.  
		\end{claim}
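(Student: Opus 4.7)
The plan is to apply the $(p,q)$-sparsity hypothesis inside $F$ to obtain an anticomplete pair and then exhibit a small-chromatic cutset using the $P_5$-free structure of $F$ via \cref{lem:mixed}. First, since $\chi(F)\ge q$ and $G$ (hence $F$) is $(p,q)$-sparse, $F$ contains an anticomplete pair $(A_0,B_0)$ with $\chi(A_0),\chi(B_0)\ge p$. Because the chromatic number of a graph equals the maximum over its connected components (a property of $\chi$ as a measure, per the discussion at the end of \cref{sec:sketch}), I may pass to suitable components so that $F[A_0]$ and $F[B_0]$ are each connected.

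Next, \cref{lem:mixed} tells me that every $v\in V(F)\setminus(A_0\cup B_0)$ is pure to at least one of $A_0,B_0$. Let $K_A$ and $K_B$ denote the vertices outside $A_0\cup B_0$ complete to $A_0$ and to $B_0$, respectively. Since the complete-pair outcome of \cref{lem:anti} is assumed to fail against $Y=A_0$ and $Y=B_0$, I have $\chi(K_A),\chi(K_B)<\eps^4\chi(G)$, hence $\chi(Z)<2\eps^4\chi(G)$ for $Z:=K_A\cup K_B$. I then claim that $Z$ already suffices as a cutset separating $A_0$ from $B_0$ in $F$. Since $F\setminus Z$ is $P_5$-free, every shortest $A_0$-$B_0$ path in $F\setminus Z$ is induced and spans at most four vertices. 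A two-edge path $a$-$v$-$b$ with $a\in A_0,\,b\in B_0$ would force $v\notin K_A\cup K_B$ adjacent to both $A_0$ and $B_0$, hence mixed on both, contradicting \cref{lem:mixed}. A three-edge path $a$-$v_1$-$v_2$-$b$ would analogously force $v_1$ to be mixed on $A_0$ and anticomplete to $B_0$, and $v_2$ to be anticomplete to $A_0$ and mixed on $B_0$; picking an edge $a_1 a_2\in F[A_0]$ with $v_1a_1$ an edge and $v_1a_2$ a non-edge (possible by mixedness plus connectedness of $F[A_0]$) then exhibits $a_2$-$a_1$-$v_1$-$v_2$-$b$ as an induced $P_5$ in $F$, a contradiction.

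Having established that $Z$ is a cutset, subadditivity of $\chi$ gives $\chi(F\setminus Z)\ge q-2\eps^4\chi(G)$, so some component $A^*$ of $F\setminus Z$ achieves this bound. I then pick two distinct components $A,B$ of $F\setminus Z$ with $\chi(A)\ge q-2\eps^4\chi(G)$ and $\chi(B)\ge p$ as follows: set $A:=A^*$, and let $B$ be whichever of the components of $F\setminus Z$ containing $A_0$ or $B_0$ is distinct from $A^*$ (both contain $A_0$ or $B_0$, hence have $\chi\ge p$ by monotonicity). Finally, shrink $Z$ to a minimal $Z'\subseteq Z$ that still separates $A$ from $B$ in $F$; the components of $F\setminus Z'$ containing $A$ and $B$ can only absorb new vertices (so only grow in chromatic number) compared to those of $F\setminus Z$, and after renaming these enlarged components as $A,B$, the triple $(Z',A,B)$ satisfies the claim. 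The main technical obstacle was showing that $K_A\cup K_B$ alone already serves as a cutset despite ignoring the mixed vertices; this is resolved by the short induced-$P_5$-construction in the second paragraph, which crucially uses the connectedness of $F[A_0]$ and $F[B_0]$.
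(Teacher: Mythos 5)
Your argument reaches the claim by a genuinely different route from the paper's. The paper chooses an anticomplete pair $(A,B)$ with $F[A],F[B]$ connected so as to maximise $\chi(A)+\chi(B)$ (and then $\abs A+\abs B$), takes a minimal cutset $Z$ separating them, and uses the minimality of $Z$ together with \cref{lem:mixed} to see that every vertex of $Z$ is complete to $A$ or to $B$, whence $\chi(Z)\le2\eps^4\chi(G)$ and, by the extremal choice, $\chi(A)=\chi(F\setminus Z)\ge q-2\eps^4\chi(G)$ while $\chi(A)\ge p$ holds automatically. You instead start from an arbitrary pair $(A_0,B_0)$ supplied by sparsity and show directly that $Z:=K_A\cup K_B$ already separates $A_0$ from $B_0$: this step is correct, since a shortest $A_0$--$B_0$ path in $F\setminus Z$ is induced, has its internal vertices outside $A_0\cup B_0$, and your three- and four-vertex analyses (using \cref{lem:mixed} and the connectedness of $F[A_0]$) rule out both possible lengths. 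Both proofs bound the cutset via the failure of the complete-pair outcome; yours trades the extremal choice for an explicit $P_5$ chase, and your final shrinking/renaming does produce a minimal nonempty cutset with $A,B$ full components (any smaller separator of the enlarged sets would separate the original ones, and $Z'\ne\emptyset$ since $F$ is connected).

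There is, however, a gap at the very end. You set $A:=A^*$, a component of $F\setminus Z$ with $\chi(A^*)\ge q-2\eps^4\chi(G)$, but the claim demands $\chi(A)\ge\max(p,q-2\eps^4\chi(G))$, and nothing in your argument gives $\chi(A^*)\ge p$. Since \cref{lem:anti} only assumes $0<p\le q\le(1-\eps^2)\chi(G)$, it can happen that $q-2\eps^4\chi(G)<p$ (for instance $p=q$), and $A^*$ may be a component containing neither $A_0$ nor $B_0$; in that case your chosen $A$ need not satisfy $\chi(A)\ge p$, a bound that is genuinely used afterwards (the decomposition in the proof of \cref{lem:anti} requires $\chi(A)\ge\max(p,q-2\eps^4\chi(G))$ for its fourth condition). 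The fix is easy: if $A^*$ contains $A_0$ or $B_0$ you are done as written, since then $\chi(A^*)\ge p$ by monotonicity; otherwise, when $q-2\eps^4\chi(G)<p$, take $A,B$ to be the two components of $F\setminus Z$ containing $A_0$ and $B_0$, both of which have chromatic number at least $p=\max(p,q-2\eps^4\chi(G))$, and run the same shrinking step.
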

		\begin{subproof}
			Since $G$ is $(p,q)$-sparse, $F$ contains an anticomplete pair $(A,B)$ with $\chi(A)\ge\chi(B)\ge p$; and we may assume that $F[A],F[B]$ are connected.
			Among all such anticomplete pairs $(A,B)$, choose one with $\chi(A)+\chi(B)$ maximal; and subject to this, with $\abs A+\abs B$ maximal.
			Since $F$ is connected, it has a minimal nonempty cutset $Z$ separating $A,B$.
			By the maximality of $(A,B)$, $F[A],F[B]$ are two of the components of $F\setminus Z$.
			By the minimality of $Z$, every vertex in $Z$ has a neighbour in each of $A$ and $B$; and so, because $F$ is $P_5$-free, \cref{lem:mixed} implies that every such vertex is complete to $A$ or to $B$ in $F$.
			Thus $\chi(Z)\le 2\eps^4\chi(G)$ since the second outcome of the lemma fails; and so the maximality of $\chi(A)$ yields
			$\chi(A)=\chi(F\setminus Z)\ge \chi(F)-2\eps^4\chi(G)\ge q-2\eps^4\chi(G)$.
			This proves \cref{claim:sparse}.
		\end{subproof}

		\begin{figure}[ht]
			\centering
			\begin{tikzpicture}[scale=0.7,auto=left]

				\draw [rounded corners] (-2,2) rectangle (2,4.5);
				\node [] at (-2.5,3.25) {$A$};
				
				\draw [rounded corners] (-3.5,-1.5) rectangle (3.5,1.5);
				\node [] at (-4,0) {$D$};
				
				\draw [rounded corners] (7.5,-0.75) rectangle (9,0.75);
				\node [] at (7,0) {$E$};
				\draw [color=white] (-7.5,-0.75) rectangle (-9,0.75);
				
				\draw[] (0,0) ellipse (2cm and 1cm);
				\node [] at (-2.5,0) {$S$};
				
				\node[inner sep=1.5pt, fill=black,circle,draw] (u) at ({-1},{0}) {};
				\node[xshift=-0.3cm] at (u) {$u$};
				\node[inner sep=1.5pt, fill=black,circle,draw] (v) at ({1},{0}) {};
				\node[xshift=0.3cm] at (v) {$v$};
				
				\draw [] (-6,-3) circle (1cm);
				\draw [] (-2,-3) circle (1cm);
				\draw [] (2,-3) circle (1cm);
				\draw [] (6,-3) circle (1cm);
				
				\node [] at (-6,-4.5) {$B_1$};
				\node [] at (-2,-4.5) {$B_i$};
				\node [] at (2,-4.5) {$B_j$};
				\node [] at (6,-4.5) {$B_k$};
				
				\node[] at (-4,-3) {$\cdots$};
				\node[] at (0,-3) {$\cdots$};
				\node[] at (4,-3) {$\cdots$};
				
				\node[inner sep=1.5pt, fill=black,circle,draw] (zi) at (-2,-3) {};
				\node[xshift=-0.3cm] at (zi) {$z_i$};
				\node[inner sep=1.5pt, fill=black,circle,draw] (zj) at (2,-3) {};
				\node[xshift=0.3cm] at (zj) {$z_j$};
				
				\draw[-] (zi) -- (u) -- (-1,2.25);
				\draw[-] (zj) -- (v) -- (1,2.25);
				\draw[-] (1,2.25) arc (0:180:1cm);
				\node [] at (0,3.65) {$P$};
				\draw[dashed] (zi) -- (v) -- (u) -- (zj) to 
				[bend left=40] (zi);
			\end{tikzpicture}
			
			\caption{The decomposition and proof of \cref{claim:2k2}.}
			\label{fig:2k2}
		\end{figure}
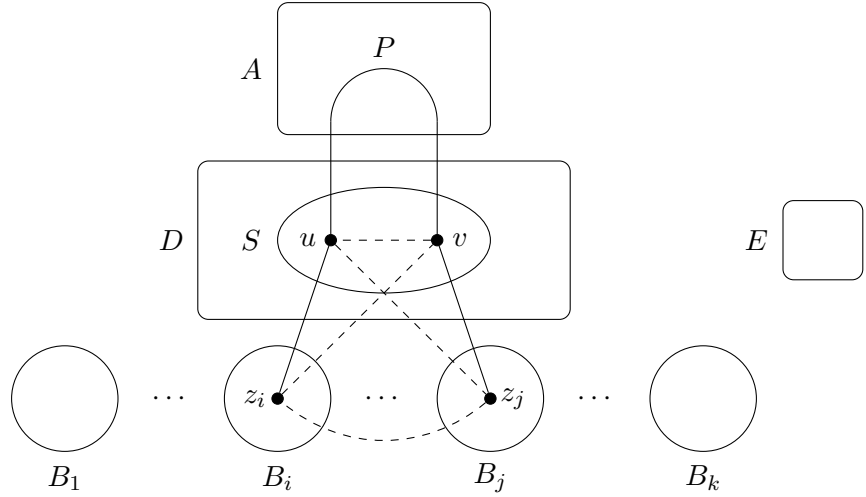

		We now define our decomposition as follows.
		By \cref{claim:sparse} with $F=G$, the graph $G$ has a minimal nonempty cutset $Z$ separating two vertex subsets $A,B$ of $G$ such that $G[A],G[B]$ are two of the components of $G\setminus Z$, $\chi(B)\ge p$, and
		\[\chi(A)\ge(1-2\eps^4)\chi(G)\ge \max(p,q-2\eps^4\chi(G)).\]
		Thus, there exists $k\ge1$ maximal such that there is a partition $(A,D,B_1,\ldots,B_k,E)$ of $V(G)$ satisfying:
		\begin{itemize}
			\item $D$ is a nonempty cutset separating $A,B_1,\ldots,B_k,E$ in $G$;
			
			\item $G[A]$ and $G[B_1],\ldots,G[B_k]$ are connected;
			
			\item every vertex in $D$ has a neighbour in $B_1\cup\cdots\cup B_k$; and
			
			\item $\chi(A)\ge \max(p,q-2\eps^4\chi(G))$, $\chi(E)<p$, and $\chi(B_i)\ge p$ for all $i\in[k]$.
		\end{itemize}
		(See \cref{fig:2k2} for an illustration.)
		The maximality of $k$ implies that $\chi(A)$ is not too large, as follows:
		\begin{claim}
			\label{claim:smalla}
			$\chi(A)<q$.
		\end{claim}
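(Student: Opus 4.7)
The strategy is to suppose $\chi(A) \ge q$ for a contradiction and produce a decomposition of length at least $k+1$, violating the maximality of $k$.

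Since $G[A]$ is connected with $\chi(G[A]) = \chi(A) \ge q$, I would apply \cref{claim:sparse} to $F := G[A]$. This yields a minimal nonempty cutset $Z \subset A$ of $G[A]$ together with two components $G[A'], G[B']$ of $G[A] \setminus Z$ satisfying $\chi(A') \ge \max(p, q - 2\eps^4 \chi(G))$ and $\chi(B') \ge p$. Let $E' := A \setminus (A' \cup B' \cup Z)$ denote the union of any remaining components of $G[A] \setminus Z$.

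Next, I would form the refined tuple by setting $A^* := A'$, $D^* := D \cup Z$, keeping each $B_i$ for $i \in [k]$, appending $B'$ as a new block, promoting every component $C$ of $E'$ with $\chi(C) \ge p$ to an additional block, and absorbing the remaining (low-chromatic) components of $E'$ together with the old $E$ into $E^*$. This tuple has at least $k+1$ blocks among the $B$-slots, so once the four axioms of a valid decomposition are verified, the maximality of $k$ is contradicted and $\chi(A) < q$ follows.

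Most of the verifications are routine: each block is connected (as a component of either $G[A] \setminus Z$ or $G \setminus D$); $D^*$ is a cutset separating all updated parts (because $D$ already separated $A$ from $B_1, \ldots, B_k, E$, and $Z$ further separates the $A$-side); each block has chromatic number at least $p$ by \cref{claim:sparse} and our thresholding on components of $E'$; and every vertex of $D^*$ has a neighbour in the updated block union, since vertices of $D$ keep their neighbours in $B_1 \cup \cdots \cup B_k$ while each vertex of $Z$ has a neighbour in $B'$ by the minimality of $Z$. The one delicate step I anticipate is bounding $\chi(E^*) < p$: for this I would exploit that $E^*$ is the union of $E$ with the low-chromatic components $C_1, \ldots, C_m$ of $G[A] \setminus Z$, all pairwise anticomplete in $G$ (distinct components of $G[A] \setminus Z$ are anticomplete, and $E$ is anticomplete to all of $A$ since $D$ separates them). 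The equality case of the measure axiom for anticomplete unions then gives $\chi(E^*) = \max(\chi(E), \chi(C_1), \ldots, \chi(C_m)) < p$, completing the verification and hence the proof.
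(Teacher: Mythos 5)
Your proposal is correct and follows essentially the same route as the paper: apply \cref{claim:sparse} inside $G[A]$ to get the cutset and the pair $(A',B')$, absorb the cutset into $D$, promote the high-chromatic components of the remainder of $A$ to new blocks, dump the low-chromatic leftovers into $E$, and contradict the maximality of $k$. The only differences are cosmetic bookkeeping (the paper folds $B'$ directly into the list of promoted components and leaves the $\chi(E\cup E')<p$ and neighbour-in-$B'$ verifications implicit, which you spell out).
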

		\begin{subproof}
			Suppose not.
			Then \cref{claim:sparse} gives a minimal nonempty cutset $S'$ separating two vertex subsets $A',B'$ in $G[A]$ such that $\chi(A')\ge\max(p,q-2\eps^4\chi(G))$, $\chi(B')\ge p$, and $G[A'], G[B']$ are two of the components of $G[A\setminus S']$. Among the components of $G[A\setminus(A'\cup S')]$, let $G[B_{k+1}],\ldots,G[B_{k+r}]$ be those with chromatic number at least $p$, and let $E':=A\setminus(A'\cup S'\cup(B_{k+1}\cup\cdots\cup B_{k+r}))$.
			Then $r\ge1$ and $\chi(E')<p$; and so
			\[(A',D\cup S',B_1,\ldots,B_k,B_{k+1},\ldots,B_{k+r},E\cup E')\]
			would be a partition of $V(G)$ violating the maximality of $k$.
			This proves \cref{claim:smalla}.
		\end{subproof}
		Let $S$ be the set of vertices in $D$ mixed on $A$;
		then every vertex in $S$ is pure to each of $B_1,\ldots,B_k$ by \cref{lem:mixed}.
		We next employ the $P_5$-free hypothesis to analyse the vertices in $S$.
		We begin with:
		
		\begin{claim}
			\label{claim:2k2}
			Assume that there are $u,v\in S$ and $i,j\in[k]$ such that $v$ is complete to $B_i$ and anticomplete to $B_j$, and $u$ is complete to $B_j$ and anticomplete to $B_i$.
			Then $u,v$ are adjacent.
		\end{claim}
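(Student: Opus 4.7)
The plan is to suppose for contradiction that $u$ and $v$ are nonadjacent, and then extract an induced $P_5$ from $G$. First I would pick representatives $z_i\in B_i$ and $z_j\in B_j$; the claim's hypotheses immediately give that $vz_i$ and $uz_j$ are edges while $uz_i$, $vz_j$, and (by the supposition) $uv$ are nonedges. Also $z_iz_j$ is a nonedge because $B_i,B_j$ lie in distinct components of $G\setminus D$, and both $z_i,z_j$ are anticomplete to $A$ since $A,B_i,B_j$ are distinct components of $G\setminus D$ as well.

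The main case split is whether $u$ and $v$ share a neighbour inside $A$. In the easy case, there exists $a\in N_G(u)\cap N_G(v)\cap A$, and then $z_i\text-v\text-a\text-u\text-z_j$ is the desired induced $P_5$: the four consecutive pairs are edges, and all six non-consecutive pairs are nonedges by the adjacencies above.

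Otherwise $N_G(u)\cap A$ and $N_G(v)\cap A$ are disjoint and both nonempty (since $u,v\in S$ are mixed on $A$). As $G[A]$ is connected, I would take a shortest path $a_1a_2\cdots a_m$ in $G[A]$ with $a_1\in N_G(u)$, $a_m\in N_G(v)$, and $m\ge 2$. Shortness enforces two key facts: each interior $a_\ell$ with $1<\ell<m$ lies outside $N_G(u)\cup N_G(v)$, and $a_pa_q$ is a nonedge whenever $|p-q|\ge 2$. Depending on $m$, the desired induced $P_5$ is $z_j\text-u\text-a_1\text-a_2\text-v$ (if $m=2$), $u\text-a_1\text-a_2\text-a_3\text-v$ (if $m=3$), or $u\text-a_1\text-a_2\text-a_3\text-a_4$ (if $m\ge 4$); the required edges come from the path together with the hypotheses on $u,v$, while the nonedges follow from the shortness constraints above together with the established (non-)adjacencies.

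I do not expect any serious obstacle; the argument is a routine $P_5$-chase exploiting the connectivity of $G[A]$ and the fact that $u,v$ are both mixed on $A$, very much in the spirit of \cref{lem:mixed}. As a sanity check, the bound $m\le 4$ is automatically forced, since a shortest path in the $P_5$-free induced subgraph $G[A]$ is an induced path and can contain at most four vertices, so only three subcases actually arise.
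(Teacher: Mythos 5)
Your proof is correct and follows essentially the same route as the paper: the paper also picks $z_i\in B_i$, $z_j\in B_j$, uses the connectivity of $G[A]$ and the fact that $u,v\in S$ have neighbours in $A$ to get an induced $u$--$v$ path $P$ with interior in $A$, and observes that if $u,v$ were nonadjacent then $z_i\text-P\text-z_j$ would be an induced path on at least five vertices. Your case analysis on the shortest-path length just unpacks that one-line argument explicitly, and all the adjacency checks you list are valid.
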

		\begin{subproof}
			Let $z_i\in B_i$ and $z_j\in B_j$.
			Since $G[A]$ is connected and each of $u,v$ has a neighbour in $A$, there is an induced path $P$ in $G$ with endpoints $u,v$ and interior in $A$. If $u,v$ are nonadjacent, then $z_i\text-P\text-z_j$ would be an induced path of length at least four in $G$, contrary to the $P_5$-freeness of $G$. (See \cref{fig:2k2}.)
			This proves \cref{claim:2k2}.
		\end{subproof}
		
		We use this to show that every vertex in $S$ has low-$\chi$ nonneighbourhood within $G[S]$, as follows.
		
		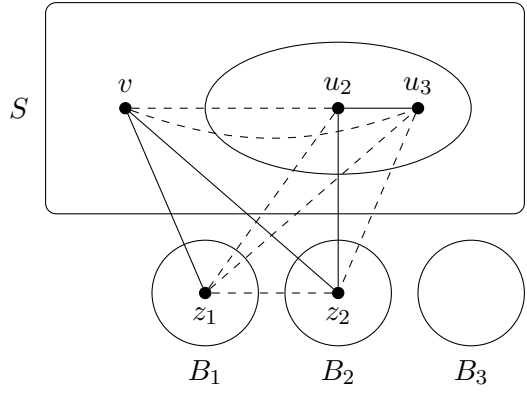
\begin{figure}[ht]
			\centering
			\begin{tikzpicture}[scale=0.7,auto=left]
				\draw [rounded corners] (-3.5,-2) rectangle (5.5,2);
				\node [] at (-4,0) {$S$};
				\node [color=white] at (6,0) {$S$};
				
				\node[inner sep=1.5pt, fill=black,circle,draw] (v) at ({-2},{0}) {};
				\node[yshift=0.3cm] at (v) {$v$};
				\node[inner sep=1.5pt, fill=black,circle,draw] (u2) at ({2},{0}) {};
				\node[yshift=0.3cm] at (u2) {$u_2$};
				\node[inner sep=1.5pt, fill=black,circle,draw] (u3) at ({3.5},{0}) {};
				\node[yshift=0.3cm] at (u3) {$u_3$};
				
				\draw[] (2,0) ellipse (2.5cm and 1.25cm);
				
				\draw [] (-0.5,-3.5) circle (1cm);
				\draw [] (2,-3.5) circle (1cm);
				\draw [] (4.5,-3.5) circle (1cm);
				
				\node[inner sep=1.5pt, fill=black,circle,draw] (z1) at (-0.5,-3.5) {};
				\node[yshift=-0.3cm] at (z1) {$z_1$};
				\node[inner sep=1.5pt, fill=black,circle,draw] (z2) at (2,-3.5) {};
				\node[yshift=-0.3cm] at (z2) {$z_2$};
				
				\node [] at (-0.5,-5) {$B_1$};
				\node [] at (2,-5) {$B_2$};
				\node [] at (4.5,-5) {$B_3$};
				
				\draw[-] (z1) -- (v) -- (z2) -- (u2) -- (u3);
				\draw[dashed] (z1) -- (u3) -- (z2) -- (z1) -- (u2) -- (v) to [bend right=20] (u3);
			\end{tikzpicture}
			
			\caption{Proof of \cref{claim:smalls}; the ellipse represents $S\setminus N_G[v]$.}
			\label{fig:smalls}
		\end{figure}
		
		\begin{claim}
			\label{claim:smalls}
			Every $v\in S$ satisfies $\chi(S\setminus N_G[v])\le 2\eps^4\chi(G)$.
		\end{claim}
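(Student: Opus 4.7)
The plan is to pick any $j\in I_v$, where $I_v:=\{i\in[k]:v\text{ is complete to }B_i\}$. This set is nonempty because $v\in D$ has a neighbour in $B_1\cup\cdots\cup B_k$ and is pure to each $B_i$ by \cref{lem:mixed}. Partition $S\setminus N_G[v]=U\cup V$ with $U:=\{u\in S\setminus N_G[v]:j\in I_u\}$ and $V$ the remainder. My aim is to show $\chi(U),\chi(V)<\eps^4\chi(G)$ and then sum.

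The bound on $U$ is immediate: $U$ is complete to $B_j$ in $G$ and $\chi(B_j)\ge p$, so if $\chi(U)\ge\eps^4\chi(G)$ then $(U,B_j)$ would realise the second outcome of the lemma, contradicting our standing assumption.

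For $V$, the key step is the $P_5$-chase drawn in \cref{fig:smalls}: if $u,u'\in V$ are adjacent and some $i\in(I_u\cap I_v)\setminus I_{u'}$ exists, then for any $z_i\in B_i$ and any $z_j\in B_j$ the five vertices $u'\text-u\text-z_i\text-v\text-z_j$ induce a $P_5$ in $G$, where the four claimed edges follow from $i\in I_u\cap I_v$ and $j\in I_v$, and the six required nonedges from $j\notin I_u\cup I_{u'}$, $i\notin I_{u'}$, anticompleteness between $B_i$ and $B_j$, and $u,u'\notin N_G[v]$. Consequently every edge $u\sim u'$ inside $V$ satisfies $I_u\cap I_v=I_{u'}\cap I_v$, so $V$ decomposes along this equivalence into pairwise anticomplete blocks $V^K:=\{u\in V:I_u\cap I_v=K\}$, yielding $\chi(V)=\max_K\chi(V^K)$. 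The block $V^\emptyset$ is empty: for $u\in V^\emptyset$, \cref{claim:2k2} applied to the nonadjacent pair $(u,v)\in S$ forces $I_u,I_v$ to be comparable, which is incompatible with $I_u\cap I_v=\emptyset$ and the fact that both $I_u,I_v$ are nonempty. For each nonempty $K$, picking any $i^*\in K\subseteq I_v$ makes $V^K$ complete to $B_{i^*}$ with $\chi(B_{i^*})\ge p$, so the complete-pair exclusion gives $\chi(V^K)<\eps^4\chi(G)$; hence $\chi(V)<\eps^4\chi(G)$.

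Summing yields $\chi(S\setminus N_G[v])\le\chi(U)+\chi(V)<2\eps^4\chi(G)$, as required. The only step that uses structural information beyond the generic complete-pair exclusion is the $P_5$-chase in $V$, which is exactly the configuration advertised in \cref{fig:smalls}, so I do not anticipate any serious obstacle.
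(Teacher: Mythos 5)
Your proof is correct, and it is built from the same ingredients as the paper's argument: purity of the vertices of $S$ towards each $B_i$ (from \cref{lem:mixed}), \cref{claim:2k2}, an induced $P_5$ using two vertices of $S\setminus N_G[v]$, one block vertex, $v$, and a second block vertex (exactly the configuration of \cref{fig:smalls}, read in reverse), and the exclusion of the second outcome of \cref{lem:anti} to bound by $\eps^4\chi(G)$ the chromatic number of any subset of $S$ complete to some block. The organisation, however, is genuinely different. The paper covers $S\setminus N_G[v]$ by the sets $S_i$ ($i\in I$, your $I_v$), invokes \cref{claim:2k2} once to show this really is a cover, takes a minimal subcover $J$ with representatives $u_i$, and bounds $\abs J\le 2$ by a $P_5$ chase in which \cref{claim:2k2} is used a second time to supply the adjacency between two representatives; the total is then $\abs J\cdot\eps^4\chi(G)$. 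You instead fix a single $j\in I_v$, peel off the part $U$ complete to $B_j$, and show that along any edge of $G[V]$ the trace $I_u\cap I_v$ is constant, so that $V$ decomposes into pairwise anticomplete classes, each nonempty-trace class being complete to some $B_{i^*}$ and the empty-trace class being ruled out by \cref{claim:2k2}; this converts the paper's ``at most two covering sets'' count into ``one complete part plus a maximum over anticomplete classes''. What your route buys is that the $P_5$ chase starts from an arbitrary edge of $G[V]$, so \cref{claim:2k2} is needed only once (to kill $V^\emptyset$) rather than also to manufacture an adjacency; what the paper's route buys is slightly shorter bookkeeping, with no equivalence-class decomposition. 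Both arguments land on the same bound $2\eps^4\chi(G)$, and all the steps you use (subadditivity of $\chi$ over $U\cup V$, $\chi$ of a disjoint union of anticomplete parts being the maximum, nonemptiness of $I_u$ for every $u\in S$) are available in the paper's setting.
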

		
		\begin{subproof}
			Let $I$ be the set of $i\in[k]$ for which $v$ has a neighbour in $B_i$; then $I\ne\emptyset$ and $v$ is complete to $B_i$ for all $i\in I$.
			For each $i\in I$, let $S_i$ be the set of vertices in $S\setminus N_G[v]$ with a neighbour in $B_i$; then $S_i$ is complete to $B_i$ and so $\chi(S_i)\le\eps^4\chi(G)$ since the second outcome of the lemma fails.
			
			If there exists $u\in S\setminus N_G[v]$ with no neighbour in $\bigcup_{i\in I}B_i$,
			then there would be $j\in [k]\setminus I$ such that $u$ is complete to $B_j$, which contradicts \cref{claim:2k2}.
			Therefore $S\setminus N_G[v]\subset \bigcup_{i\in I}S_i$; and so there exists $J\subset I$ minimal for which $S\setminus N_G[v]\subset \bigcup_{i\in J}S_i$.
			By the minimality of $J$, for each $i\in J$ there exists $u_i\in S_i\setminus(\bigcup_{j\in J\setminus\{i\}}S_j)$.
			
			We claim that $\abs J\le 2$.
			Suppose not; then we may assume $1,2,3\in J$ without loss of generality.
			Let $z_1\in B_1$ and $z_2\in B_2$.
			\cref{claim:2k2} implies that $u_1,u_2$ are adjacent; but then $z_1\text-v\text-z_2\text-u_2\text-u_3$ would be an induced $P_5$ in $G$, a contradiction. (See \cref{fig:smalls}.)
			Hence $\abs J\le 2$; and so $\chi(S\setminus N_G[v])\le \abs J\cdot \eps^4\chi(G)\le 2\eps^4\chi(G)$.
			This proves \cref{claim:smalls}.
		\end{subproof}
		
		Now, since the last outcome of the lemma fails, \cref{claim:smalls} implies that $\chi(S)\le 2\eps^3\chi(G)$. Let $R$ be the set of vertices in $D$ complete to $A$; then $\chi(R)\le \eps^4\chi(G)$ since the second outcome of the lemma fails.
		Let $B:=B_1\cup\cdots\cup B_k$; then since $V(G)=A\cup D\cup B\cup E$, we obtain
		\[\begin{aligned}
			\chi(A\cup(D\setminus (R\cup S))\cup B\cup E)
			\ge \chi(G)-\chi(R\cup S)
			\ge(1-2\eps^3-\eps^4)\chi(G)\ge (1-\eps^2)\chi(G)\ge q>\chi(A)
		\end{aligned}
		\]
		where the third inequality is due to $\eps\in(0,\frac14]$ and the last inequality is due to \cref{claim:smalla}.
		Thus, since $A$ is anticomplete to $(D\setminus (R\cup S))\cup B\cup E$, \cref{claim:smalla} again yields
		\[\chi((D\setminus (R\cup S))\cup B\cup E)>(1-\eps^2)\chi(G)\]
		and so $(A,(D\setminus (R\cup S))\cup B\cup E)$ satisfies the first outcome of the lemma.
		This proves \cref{lem:anti}.
	\end{proof}
	
	By taking $p=\eps^4\chi(G)$ and $q=(1-\eps^2)\chi(G)$, \cref{lem:anti} allows us to upgrade \cref{lem:chirdl} into the following, which is the first step sketched in \cref{subsec:highchi}.
	
	\begin{lemma}
		\label{lem:1step}
		For every $\eps\in(0,2^{-9}]$, every $P_5$-free graph $G$ contains either:
		\begin{itemize}
			\item an anticomplete pair $(A,B)$ with $\chi(A),\chi(B)\ge(1-2\eps^2)\chi(G)$;
			
			\item a complete pair $(X,Y)$ with $\chi(X),\chi(Y)\ge\eps^{4}\chi(G)$; or 
			
			\item an $(\eps,\chi)$-dense induced subgraph $F$ with $\chi(F)\ge 2\eps^3\chi(G)$.
		\end{itemize}
	\end{lemma}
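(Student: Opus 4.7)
The plan is to combine \cref{lem:chirdl} with \cref{lem:anti}, by applying the first to establish a sparsity hypothesis for the second. Suppose for contradiction that none of the three outcomes of \cref{lem:1step} holds in $G$. Set $\delta:=2^{-7}\eps^2$ and $q:=(1-\eps^2)\chi(G)$; the first key step is to show that $G$ is $(p,q)$-sparse for $p:=\delta q$. Indeed, for any induced subgraph $F$ of $G$ with $\chi(F)\ge q$, applying \cref{lem:chirdl} to $F$ (with the same $\eps$ and $\delta$) yields either a pure pair or an $(\eps,\chi)$-dense induced subgraph, each with chromatic number at least $\delta\chi(F)\ge p$. The $(\eps,\chi)$-dense subgraph case contradicts the failure of the third outcome of \cref{lem:1step}, since $p\ge 2\eps^3\chi(G)$ when $\eps\le 2^{-9}$. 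The complete pure pair case contradicts the failure of the second outcome, since $p\ge \eps^4\chi(G)$. Only the anticomplete case survives, which is exactly what $(p,q)$-sparsity demands.

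Having established $(p,q)$-sparsity, I would then feed this into \cref{lem:anti}. Its second outcome (a complete pair with $\chi(X)\ge \eps^4\chi(G)$ and $\chi(Y)\ge p$) gives the second outcome of \cref{lem:1step} once we use $p\ge \eps^4\chi(G)$; its third outcome transcribes directly. Its first outcome yields an anticomplete pair $(A,B)$ with $\chi(A)\ge q-2\eps^4\chi(G)=(1-\eps^2-2\eps^4)\chi(G)$ and $\chi(B)\ge (1-\eps^2)\chi(G)$; since $2\eps^4\le \eps^2$ for $\eps\le 2^{-9}$, both are at least $(1-2\eps^2)\chi(G)$, which is the desired first outcome of \cref{lem:1step}.

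The difficulty here is purely arithmetic: the chain of constants from $\delta=2^{-7}\eps^2$ through $p=\delta q$ and then through all three outcomes of \cref{lem:anti} must line up simultaneously with the three target bounds $(1-2\eps^2)\chi(G)$, $\eps^4\chi(G)$, and $2\eps^3\chi(G)$, and the hypothesis $\eps\le 2^{-9}$ is exactly what makes all these inequalities hold with room to spare. No further structural use of the $P_5$-free hypothesis is required beyond what is already packaged into \cref{lem:chirdl} and \cref{lem:anti}; the lemma is essentially a clean combination of these two ingredients, with the role of $(p,q)$-sparsity being to transfer the \emph{anticomplete} alternative of \cref{lem:chirdl} from a single fixed subgraph to \emph{all} subgraphs of sufficient chromatic number, as \cref{lem:anti} demands.
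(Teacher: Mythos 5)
Your proposal is correct and follows essentially the same route as the paper: it establishes $(p,q)$-sparsity with $q=(1-\eps^2)\chi(G)$ by applying \cref{lem:chirdl} to every induced subgraph of chromatic number at least $q$ (the pure-pair case being forced anticomplete and the dense case contradicting the supposition), and then feeds this into \cref{lem:anti} and checks constants. The only cosmetic differences are your slightly larger choice of $p$ (the paper takes $p=\eps^4\chi(G)$) and the paper's preliminary disposal of the case $\chi(G)\le\eps^{-4}$, neither of which matters since \cref{lem:anti} as stated only needs $0<p\le q\le(1-\eps^2)\chi(G)$.
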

	\begin{proof}
		Let $p:=\eps^4\chi(G)$, let $q:=(1-\eps^{2})\chi(G)$,
		and suppose that none of the outcomes holds.
		If $\chi(G)\le \frac12\eps^{-3}$ then the third outcome holds; and if $\frac12\eps^{-3}\le\chi(G)\le \eps^{-4}$ then $2\le \chi(G)\le\eps^{-4}$ and so the second outcome holds.
		Thus $\chi(G)\ge \eps^{-4}$. We claim that:
		\begin{claim}
			\label{claim:sp}
			$G$ is $(p,q)$-sparse.
		\end{claim}
		\begin{subproof}
			Let $F$ be an induced subgraph of $G$ with $\chi(F)\ge q= (1-\eps^{2})\chi(G)\ge\frac12\chi(G)$.
			By \cref{lem:chirdl}, $F$ contains either:
			\begin{itemize}
				\item a pure pair $(A,B)$ with $\chi(A),\chi(B)\ge2^{-7}\eps^2\chi(F)$; or
				
				\item a $(\eps,\chi)$-dense induced subgraph $J$ with $\chi(J)\ge 2^{-7}\eps^2\chi(F)$.
			\end{itemize}
			
			If the second bullet holds, then since $\eps\le 2^{-9}$, we have $2^{-7}\eps^2\chi(F)\ge 2^{-8}\eps^2\chi(G)\ge 2\eps^3\chi(G)$; and so the third outcome of the lemma holds, a contradiction.
			Thus the first bullet holds; and so
			$$\chi(A),\chi(B)\ge 2^{-7}\eps^2\chi(F)\ge2^{-8}\eps^{2}\chi(G)\ge \eps^4\chi(G)=p.$$
			Thus, since the second outcome of the lemma does not  hold, $A$ is anticomplete to $B$.
			This completes the proof of \cref{claim:sp}.
		\end{subproof}
		
		Now, by \cref{claim:sp,lem:anti}, $G$ contains either:
		\begin{itemize}
			\item an anticomplete pair $(A,B)$ with
			$$\chi(A)\ge q-2\eps^{4}\chi(G)=(1-\eps^2-2\eps^{4})\chi(G)\ge(1-2\eps^2)\chi(G)\quad\text{and}\quad\chi(B)\ge(1-\eps^{2})\chi(G);$$
			
			\item a complete pair $(X,Y)$ with $\chi(X)\ge\eps^{4}\chi(G)$ and $\chi(Y)\ge p=\eps^4\chi(G)$; or
			
			\item an $(\eps,\chi)$-dense induced subgraph $F$ with  $\chi(F)\ge2\eps^{3}\chi(G)$.
		\end{itemize}
		
		This proves \cref{lem:1step}.
	\end{proof}
	
	Given \cref{lem:1step}, we now iterate \cref{lem:anti} to grow high-$\chi$ anticomplete pairs indefinitely inside any given $P_5$-free graph, provided that a polynomially $\chi$-dense induced subgraph does not exist.
	In what follows, for $c>0$, let $\phi_0(c):=1$; and
	for every integer $s\ge1$, let $$\phi_s(c):=\prod_{i=1}^{s}\left(1-c^{2^{i+1}}\right),$$
	and for every integer $r$ with $0\le r\le s$, let
	$$\phi_{r,s}(c):=\phi_s(c)/\phi_r(c)=\prod_{r<i\le s}\left(1-c^{2^{i+1}}\right)$$
	in particular $\phi_{0,s}(c)=\phi_s(c)$. 
	
	\begin{lemma}
		\label{lem:incchi}
		For every $c\in(0,2^{-9}]$ and every integer $s\ge0$, every $P_5$-free graph $G$ contains either:
		\begin{itemize}
			\item an anticomplete pair $(A,B)$ with $\chi(A),\chi(B)\ge(1-2c^{2^{s+1}})\chi(G)$;
			
			\item a complete pair $(P,Q)$ with $\chi(P),\chi(Q)\ge\phi_s(c)\cdot c^4\chi(G)$;
			
			\item a complete pair $(X,Y)$ with
			$$\chi(X)\ge \phi_{r,s}(c)\cdot c^{2^{r+2}}\chi(G)
			\qquad\text{and}\qquad
			\chi(Y)\ge \phi_{r,s}(c)\cdot\left(1-3c^{2^{r}}\right)\chi(G)$$
			for some integer $r$ with $1\le r\le s$ (so $s\ge1$); or
			
			\item a $(c^{2^{r}},\chi)$-dense induced subgraph $F$ with $\chi(F)\ge \phi_{r,s}(c)\cdot 2c^{3\cdot2^{r}}\chi(G)$, for some integer $r$ with $0\le r\le s$.
		\end{itemize}
	\end{lemma}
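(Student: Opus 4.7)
The plan is to prove \cref{lem:incchi} by induction on $s$, using \cref{lem:1step} as the base case and \cref{lem:anti} as the engine driving each induction step. For $s = 0$, note that $\phi_0(c) = \phi_{0,0}(c) = 1$ and the range $1 \le r \le 0$ in outcome~3 is empty, so \cref{lem:1step} applied with $\eps := c$ matches outcome~1 (via $(1-2c^2) = (1 - 2c^{2^{0+1}})$), outcome~2, and outcome~4 (with $r = 0$) exactly.

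For the induction step, fix $s \ge 0$ and assume the lemma for $s$; we show it for $s+1$. Set $\eps := c^{2^{s+1}}$, $q := (1 - \eps^2)\chi(G) = (1 - c^{2^{s+2}})\chi(G)$, and $p := (1 - 2c^{2^{s+1}}) q$. Suppose outcomes~2, 3, 4 at level $s{+}1$ all fail for $G$. First I would establish that $G$ is $(p,q)$-sparse: given any induced $F \subset G$ with $\chi(F) \ge q$, apply the induction hypothesis to $F$. Any of outcomes~2, 3, 4 at level $s$ for $F$ upgrades to the corresponding outcome at level $s{+}1$ for $G$ by multiplying the bounds by $\chi(F)/\chi(G) \ge 1 - c^{2^{s+2}}$ and using the key identity $\phi_{r,s}(c)(1 - c^{2^{s+2}}) = \phi_{r,s+1}(c)$ (with $r = 0$ handled by $\phi_s(c)(1 - c^{2^{s+2}}) = \phi_{s+1}(c)$). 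Each such upgrade would contradict our supposition; hence outcome~1 at level $s$ holds for $F$, producing an anticomplete pair in $F$ with chromatic number at least $(1 - 2c^{2^{s+1}})\chi(F) \ge p$, as required.

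Now I would feed this sparsity into \cref{lem:anti} with parameters $(\eps, p, q)$ as above (verifying $\eps \le 2^{-18} \le 1/4$ and $p \le q \le (1-\eps^2)\chi(G)$). The three outcomes of \cref{lem:anti} correspond cleanly to outcomes for level $s+1$ on $G$: outcome~1 of \cref{lem:anti} gives an anticomplete pair with chromatic number at least $q - 2\eps^4\chi(G) = (1 - c^{2^{s+2}} - 2c^{2^{s+3}})\chi(G) \ge (1 - 2c^{2^{s+2}})\chi(G)$, matching outcome~1 at level $s+1$; outcome~2 produces a complete pair with $\chi(X) \ge \eps^4 \chi(G) = c^{2^{s+3}}\chi(G)$ and $\chi(Y) \ge p \ge (1 - 3c^{2^{s+1}})\chi(G)$, which is precisely outcome~3 at level $s+1$ with $r = s+1$ (using $\phi_{s+1,s+1}(c) = 1$); and outcome~3 gives an $(\eps, \chi)$-dense subgraph with $\chi \ge 2\eps^3\chi(G) = 2c^{3 \cdot 2^{s+1}}\chi(G)$, which is outcome~4 at level $s+1$ with $r = s+1$.

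The main obstacle is the parameter bookkeeping: one must pick the threshold $q = (1 - c^{2^{s+2}})\chi(G)$ so that \emph{exactly} the factor needed to advance $\phi_{r,s}$ to $\phi_{r,s+1}$ is gained from restricting to $F \subset G$, and pick $\eps = c^{2^{s+1}}$ so that both the $\eps^2 = c^{2^{s+2}}$ term from \cref{lem:anti} aligns with $q$ and the $\eps^3, \eps^4$ terms match the $r = s+1$ slots of outcomes~3 and~4. The small inequalities of the form $c^{2^{s+2}} + 2c^{2^{s+3}} \le 2c^{2^{s+2}}$ and $(1 - 2c^{2^{s+1}})(1 - c^{2^{s+2}}) \ge 1 - 3c^{2^{s+1}}$, which rely on $c \le 2^{-9}$ making higher powers of $c$ negligible relative to lower powers, are the routine technical checks needed to glue the bounds together.
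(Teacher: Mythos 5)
Your proposal is correct and follows essentially the same route as the paper: induction on $s$ with \cref{lem:1step} as the base case, establishing $(p,q)$-sparsity of $G$ by applying the induction hypothesis to any $F$ with $\chi(F)\ge q=(1-c^{2^{s+2}})\chi(G)$ and using the telescoping identity $\phi_{r,s}(c)(1-c^{2^{s+2}})=\phi_{r,s+1}(c)$, then feeding this into \cref{lem:anti} with $\eps=c^{2^{s+1}}$ and matching its three outcomes to outcomes 1, 3, and 4 at level $s+1$ (the latter two with $r=s+1$). The only cosmetic difference is your choice $p=(1-2c^{2^{s+1}})q$ versus the paper's $p=(1-3c^{2^{s+1}})\chi(G)$, which changes nothing since both satisfy $p\le q$ and $p\ge(1-3c^{2^{s+1}})\chi(G)$.
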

	\begin{proof}
		We proceed by induction on $s\ge0$. The case $s=0$ follows from \cref{lem:1step} with $\eps=c$, whose first, second, and third outcomes verify, respectively, the first, second, and last outcomes of the lemma.
		Now, for $s\ge0$, we will prove the lemma for $s+1$ assuming that it is true for $s$ (and for all $c\in(0,2^{-9}]$ and all $P_5$-free graphs $G$).
		To this end, let $c\in(0,2^{-9}]$, let $G$ be a $P_5$-free graph and assume that the last three outcomes of the lemma (for $s+1$ in place of $s$) do not hold. Let $\eps:=c^{2^{s+1}}$, $p:=(1-3\eps)\chi(G)$, and $q:=(1-\eps^{2})\chi(G)$. 
		We claim that:
		\begin{claim}
			\label{claim:incrho}
			$G$ is $(p,q)$-sparse.
		\end{claim}
		\begin{subproof}
			Let $F$ be an induced subgraph of $G$ with $\chi(F)\ge q=(1-\eps^2)\chi(G)=(1-c^{2^{s+2}})\chi(G)$;
			then $\chi(F)\ge\frac12\chi(G)$.
			By induction for $s$ applied to $F$, $F$ contains either:
			\begin{itemize}
				\item an anticomplete pair $(A,B)$ with $\chi(A),\chi(B)\ge(1-2c^{2^{s+1}})\chi(F)=(1-2\eps)\chi(F)$;
				
				\item a complete pair $(P,Q)$ with $\chi(P),\chi(Q)\ge\phi_s(c)\cdot c^4\chi(F)$;
				
				\item a complete pair $(X,Y)$ with $$\chi(X)\ge\phi_{r,s}(c)\cdot c^{2^{r+2}}\chi(F)
				\qquad\text{and}\qquad
				\chi(Y)\ge\phi_{r,s}(c)\cdot\left(1-3c^{2^{r}}\right)\chi(F)$$
				for some integer $r$ with $1\le r\le s$; or
				
				\item a $(c^{2^{r}},\chi)$-dense induced subgraph $J$ with $\chi(J)\ge \phi_{r,s}(c)\cdot 2c^{3\cdot2^{r}}\chi(F)$, for some integer $r$ with $0\le r\le s$.
			\end{itemize}
			
			Because
			$$\phi_{r,s}(c)\cdot\chi(F)\ge \phi_{r,s}(c)\cdot\left(1-c^{2^{s+2}}\right)\chi(G)=\phi_{r,s+1}(c)\cdot\chi(G)$$
			for all integers $r$ with $0\le r\le s$,
			if one of the last three bullets holds then one of the last three outcomes of the lemma holds (respectively) for $s+1$, a contradiction.
			Thus the first bullet holds; and this proves \cref{claim:incrho}
			because
			\[(1-2\eps)\chi(F)\ge(1-2\eps)(1-\eps^2)\chi(G)\ge(1-3\eps)\chi(G)=p.
			\qedhere\]
		\end{subproof}
		
		Now, by \cref{lem:anti}, $G$ contains either:
		\begin{itemize}
			\item an anticomplete pair $(A,B)$ with $\chi(A)\ge q-2\eps^4\chi(G)$ and $\chi(B)\ge(1-\eps^2)\chi(G)$;
			
			\item a complete pair $(X,Y)$ with $\chi(X)\ge \eps^4\chi(G)$ and $\chi(Y)\ge p$; or
			
			\item an $(\eps,\chi)$-dense induced subgraph $J$ with $\chi(J)\ge2\eps^3\chi(G)$.
		\end{itemize}
		
		If the second bullet holds then \[\chi(X)\ge\eps^4\chi(G)=c^{2^{s+3}}\chi(G)
		\qquad\text{and}\qquad
		\chi(Y)\ge p=(1-3\eps)\chi(G)=\left(1-3c^{2^{s+1}}\right)\chi(G)\]
		and so the third outcome of the lemma holds for $s+1$ (with $r=s+1$), a contradiction.
		
		If the third bullet holds then the third outcome of the lemma holds with $r=s+1$, a contradiction.
		
		Hence the first bullet holds; then $\chi(B)\ge(1-\eps^2)\chi(G)=(1-c^{2^{s+2}})\chi(G)$ and
		\[\chi(A)\ge q-2\eps^4\chi(G)\ge (1-\eps^2-2\eps^4)\chi(G)\ge(1-2\eps^2)\chi(G)
		=\left(1-2c^{2^{s+2}}\right)\chi(G)\]
		and so the first outcome of the lemma holds for $s+1$. This proves \cref{lem:incchi}.
	\end{proof}
	We are now ready to prove \cref{lem:antino}, which we restate here for the reader's convenience.
	
	\begin{lemma}
		\label{lem:noanti}
		Let $a\ge2$ and $c\in(0,2^{-9}]$. Then every $P_5$-free graph $G$ with $\chi(G)\ge c^{-a}$ contains either:
		\begin{itemize}
			\item a complete pair $(P,Q)$ with $\chi(P),\chi(Q)\ge \frac12c^4\chi(G)$;
			
			\item a complete pair $(X,Y)$ with $\chi(X)\ge y^{5a}\chi(G)$ and $\chi(Y)\ge (1-y)\chi(G)$ for some $y\in(0,\frac14)$; or
			
			\item an $(\eps,\chi)$-dense induced subgraph $F$ with $\chi(F)\ge \eps^3\chi(G)$ for some $\eps\in[\chi(G)^{-1/a},c]$.
		\end{itemize}
	\end{lemma}
	\begin{proof}
		We may assume that $G$ is connected. Also, suppose that all outcomes do not hold.
		Let $w:=\chi(G)\ge c^{-a}$; we claim that:
		\begin{claim}
			\label{claim:toobig}
			$G$ contains an anticomplete pair $(A,B)$ with $\chi(A),\chi(B)\ge (1-w^{-1/(2a)})\chi(G)$.
		\end{claim}
		\begin{subproof}
			Since $w^{1/a}\ge c^{-1}$, there is a maximal integer $s\ge0$ such that $c^{-2^{s}}\le w^{1/a}$; then $c^{-2^{s+1}}\ge w^{1/a}$ and so $c^{-2^{s+1}}/2\ge w^{1/(2a)}$. For all $r\in\{0,1,\ldots,s\}$, since $c\le 2^{-9}$ we have
			\[\phi_{r,s}(c)=\prod_{r<i\le s}\left(1-c^{2^{i+1}}\right)
			\ge 1-\sum_{r<i\le s}c^{2^{i+1}}\ge 1-2c^{2^{r+2}}\ge1/2\]
			and in particular $\phi_s(c)=\phi_{0,s}(c)\ge 1/2$.
			
			Now, by \cref{lem:incchi}, $G$ contains either:
			\begin{itemize}
				\item an anticomplete pair $(A,B)$ with $\chi(A),\chi(B)\ge(1-2c^{2^{s+1}})\chi(G)$;
				
				\item a complete pair $(P,Q)$ with $\chi(P),\chi(Q)\ge\phi_s(c)\cdot c^4\chi(G)$;
				
				\item a complete pair $(X,Y)$ with
				$$\chi(X)\ge \phi_{r,s}(c)\cdot c^{2^{r+2}}\chi(G)
				\qquad\text{and}\qquad
				\chi(Y)\ge \phi_{r,s}(c)\cdot\left(1-3c^{2^{r}}\right)\chi(G)$$
				for some integer $r$ with $1\le r\le s$ (so $s\ge1$); or
				
				\item a $(c^{2^{r}},\chi)$-dense induced subgraph $F$ with $\chi(F)\ge \phi_{r,s}(c)\cdot 2c^{3\cdot2^{r}}\chi(G)$, for some integer $r$ with $0\le r\le s$.
			\end{itemize}
			
			If the second bullet holds then the first outcome of the lemma holds since
			$\phi_s(c)\cdot c^{4}\chi(G)\ge \frac12c^4\chi(G)$, contrary to our assumption.
			
			If the third bullet holds then since $r\ge1$, $a\ge 2$, and $c\le 2^{-9}$, we have
			\[\begin{aligned}
				\chi(X)&\ge \phi_{r,s}(c)\cdot c^{2^{r+2}}\chi(G)\ge (1/2)\cdot c^{2^{r+2}}\chi(G)\ge c^{5\cdot 2^{r}}\chi(G)\ge c^{5a\cdot 2^{r-1}}\chi(G),\quad\text{and}\\
				\chi(Y)&\ge\phi_{r,s}(c)\cdot\left(1-3\cdot c^{2^{r}}\right)\chi(G)\\
				&\ge\left(1-2c^{2^{r+2}}\right)\left(1-3\cdot c^{2^{r}}\right)\chi(G)
				\ge \left(1-4\cdot c^{2^{r}}\right)\chi(G)
				\ge \left(1-c^{2^{r-1}}\right)\chi(G)
			\end{aligned}\]
			and so the second outcome of the lemma holds with $y=c^{2^{r-1}}\le c<\frac14$, contrary to our assumption.
			
			If the fourth bullet holds then for $\eps=c^{2^{r}}\ge c^{2^{s}}\ge \chi(G)^{-1/a}$ we have
			$$\chi(F)\ge\phi_{r,s}(c)\cdot 2c^{3\cdot2^{r}}\chi(G)\ge c^{3\cdot2^{r}}\chi(G)\ge\eps^{3}\chi(G)$$
			but then the third outcome of the lemma holds, a contradiction. 
			
			Hence, the first bullet above holds, proving \cref{claim:toobig}.
		\end{subproof}
		
		Now, \cref{claim:toobig} gives an anticomplete pair $(A,B)$ with $\chi(A),\chi(B)\ge (1-w^{-1/(2a)})\chi(G)$.
		Among all such pairs, choose one such that $G[A],G[B]$ are connected and $\abs A+\abs B$ is maximal.
		Since $G$ is connected, it has a minimal nonempty cutset $S$ separating $A,B$.
		By the maximality of $\abs A+\abs B$, $G[A]$ and $G[B]$ are then two of the components of $G\setminus S$.
		Let $v\in S$. Since $G[A],G[B]$ are connected, $S$ is minimal, and $G$ is $P_5$-free, \cref{lem:mixed} implies that $v$ is complete to $A$ or to $B$; and so $\chi(N_G(v))\ge\min(\chi(A),\chi(B))\ge(1-w^{-1/(2a)})\chi(G)$.
		Then the second outcome of the lemma holds with $X=\{v\}$, $Y=N_G(v)$, and $y=w^{-1/(2a)}\le c^{1/2}<\frac14$, a contradiction.
		This proves \cref{lem:noanti}.
	\end{proof}
	
	\section{Excluding an additional five-cycle}
	\label{sec:c5}
	Since the polynomial $\chi$-boundedness problem for $\{P_5,C_5\}$-free graphs remained open for some time, the current section provides a relatively short proof of this (provided \cref{lem:antino}).
	Much of the argument here also serves as a proof of concept for chromatic density increment, which hopefully will be a preparation for the remainder of the paper.
	The proof does not rely on any known \erh{} result and produces the explicit polynomial $\chi$-binding function $x\mapsto x^{40}$ (we make no attempt to optimise this).
	\begin{theorem}
		\label{thm:pc5}
		Every $\{P_5,C_5\}$-free graph $G$ satisfies $\chi(G)\le\omega(G)^{40}$.
	\end{theorem}
	A blockade $(B_1,\ldots,B_k)$ in a graph $G$ is {\em restrained} if $\omega(B_1)+\cdots+\omega(B_k)\le\omega(G)$.
	Thus complete blockades are restrained, and \cref{thm:pc5} is proved via the following analogue of \cref{thm:main}.
	
	\begin{theorem}
		\label{thm:mainc5}
		Every $\{P_5,C_5\}$-free graph $G$ with $\chi(G)\ge2$ contains either:
		\begin{itemize}
			\item a complete pair $(X,Y)$ with $\chi(X)\ge y^{20}\chi(G)$ and $\chi(Y)\ge (1-y)\chi(G)$ for some $y\in(0,\frac14)$; or
			
			\item a restrained blockade of length $k$ and mass at least $k^{-40}\chi(G)$, for some integer $k\ge2$.
		\end{itemize}
	\end{theorem}
	It is not hard to deduce \cref{thm:pc5} from \cref{thm:mainc5} in a manner similar to the deduction of \cref{thm:p5} from \cref{thm:main} given in \cref{sec:sketch}; here restrained blockades are as useful as their complete strengthening. Therefore we omit the detailed proof of deduction.
	
	\cref{thm:mainc5} is proved via the following variant of \cref{lem:incre} for $\chi$-dense $\{P_5,C_5\}$-free graphs.
	
	\begin{lemma}
		\label{lem:ext}
		For every $\eps\in(0,2^{-9}]$ and every $(\eps,\chi)$-dense $\{P_5,C_5\}$-free graph $G$, there exist an integer  $k\ge\min(\eps^{-1/8},\chi(G)^{1/2})$ and a restrained blockade in $G$ of length $k$ and mass at least $k^{-16}\chi(G)$.
	\end{lemma}
	
	Provided this result, we can prove \cref{thm:mainc5} by means of \cref{lem:antino}, in a manner similar to the proof of \cref{thm:main} based on \cref{lem:antino,lem:incre} given in \cref{subsec:incre}, as follows.
	
	\begin{proof}
		[Proof of \cref{thm:mainc5}, assuming \cref{lem:ext}]
		If $\chi(G)\le 2^{40}$ then the second outcome holds with $k=2$, since $G$ has at least one edge. Thus we may assume $\chi(G)\ge2^{40}$.
		By \cref{lem:antino} with $a=4$ and $c=2^{-9}$ (then $\chi(G)\ge2^{40}\ge c^{-a}$), $G$ contains either:
		\begin{itemize}
			\item a complete pair $(P,Q)$ with $\chi(P),\chi(Q)\ge \frac12c^4\chi(G)=2^{-37}\chi(G)$;
			
			\item a complete pair $(X,Y)$ with $\chi(X)\ge y^{5a}\chi(G)=y^{20}\chi(G)$ and $\chi(Y)\ge (1-y)\chi(G)$ for some $y\in(0,c]$; or
			
			\item an $(\eps,\chi)$-dense induced subgraph $F$ with $\chi(F)\ge \eps^3\chi(G)$ for some $\eps\in[\chi(G)^{-1/4},c]$.
		\end{itemize}
		
		If the first or second bullet holds then the second or first outcome of the theorem holds, respectively.
		Thus we may assume that the third bullet holds.
		Since $\chi(F)\ge \eps^3\chi(G)\ge \eps^{-1}$,
		\cref{lem:ext} gives an integer  $k\ge\min(\eps^{-1/8},\chi(F)^{1/2})\ge\eps^{-1/8}\ge c^{-1/8}\ge2$ and a restrained blockade in $F$ of length $k$ and mass at least $k^{-16}\chi(F)\ge k^{-16}\eps^3\chi(G)\ge  k^{-40}\chi(G)$.
		This verifies the second outcome and in turn proves \cref{thm:mainc5}.
	\end{proof}
	
	The rest of this section is thus devoted to proving \cref{lem:ext} by chromatic density increment.
	We begin with the following simple partition lemma.
	
	\begin{lemma}
		\label{lem:blocks}
		Let $y\in(0,\frac15]$, let $G$ be a graph, and let $(A_1,\ldots,A_k)$ be a partition of $V(G)$ into nonempty subsets such that $\chi(A_i)< y\cdot\chi(G)$ for all $i\in[k]$.
		Then there are at least $y^{-1/2}$ disjoint subsets $I_1,\ldots,I_s$ of $[k]$ such that $\chi(\bigcup_{i\in I_j}A_i)\ge y\cdot\chi(G)$ for all $j\in[s]$.
	\end{lemma}
	\begin{proof}
		Let $s\ge1$ be minimal such that $[k]$ admits a partition $I_1\cup\cdots\cup I_s$ for which $\chi(\bigcup_{i\in I_j}A_i)\le 2y\cdot\chi(G)$ for all $j\in[s]$; for $s=k$ one can take $I_j=\{j\}$ for all $j\in[k]$.
		The minimality of $s$ implies that there is at most one $j\in[s]$ for which $\chi(\bigcup_{i\in I_j}A_i)\le y\cdot\chi(G)$; and so there are at least $\floor{(2y)^{-1}(1-y)}+1$ indices $j\in[s]$ with $\chi(\bigcup_{i\in I_j}A_i)\ge y\cdot \chi(G)$.
		Since $\floor{(2y)^{-1}(1-y)}+1\ge y^{-1/2}$ for $y\in(0,\frac15]$, taking $B_j:=\bigcup_{i\in I_j}A_i$ for each such $j$ then proves \cref{lem:blocks}.
	\end{proof}
	We now use the $\{P_5,C_5\}$-free hypothesis to obtain the following fact.
	Its proof is inspired by the resolution of the $C_5$-free case of the \erh{} conjecture \ref{conj:eh}~\cite{MR4563865}, and partly explains why a quick proof of \cref{lem:ext} in this section requires blockades that are only restrained (maximum cliques involved) but not fully complete.
	
	\begin{lemma}
		\label{lem:keyc5}
		Let $y\in(0,2^{-5}]$, and let $G$ be a $\{P_5,C_5\}$-free graph.
		Let $v\in V(G)$, $A\subset N_G(v)$, and $B\subset V(G)\setminus N_G[v]$, such that every $u\in B$ satisfies $\chi(A\setminus N_G(u))\le y\cdot\chi(A)$. Then either:
		\begin{itemize}
			\item there exists $D\subset A$ such that $\chi(D)\ge(1-y^{1/2})\chi(A)$ and $(B,D)$ is restrained in $G$; or
			
			\item $G[A]$ contains a complete blockade of length $k$ and mass at least $k^{-4}\chi(A)$, for some integer $k\ge y^{-1/4}$. 
		\end{itemize}
	\end{lemma}
	\begin{proof}
		Let $K$ be a maximum clique in $G[B]$, and let $S$ be the set of vertices in $A$ with a nonneighbour in $K$.
		We may assume that $\chi(S)\ge y^{1/2}\chi(A)$, for otherwise the first outcome of the lemma holds with $D=A\setminus S$ because $K$ is complete to $A\setminus S$.
		Let $\ell\ge1$ be minimal such that there is a subset $\{v_1,\ldots,v_\ell\}$ of $K$ for which every vertex in $S$ has a nonneighbour in this set.
		For every $i\in[\ell]$, let $A_i$ be the set of vertices in $S$ nonadjacent to $v_i$ and adjacent to every vertex in $v_1,\ldots,v_{i-1}$.
		Then $(A_1,\ldots,A_\ell)$ is a partition of $S$; and for all $i\in[\ell]$, $v_i$ is anticomplete to $A_i$ and complete to $A_{i+1}\cup\cdots\cup A_\ell$. The minimality of $\ell$ implies that $A_1,\ldots,A_\ell$ are nonempty.
		Thus, since $\chi(A_i)\le y\cdot\chi(A)\le y^{1/2}\chi(S)$ for all $i\in[\ell]$,
		\cref{lem:blocks} (with $y^{1/2}\in(0,\frac15]$ replacing $y$) gives at least $y^{-1/4}$ disjoint subsets $I_1,\ldots,I_s$ of $[\ell]$ such that, for $B_j=\bigcup_{i\in I_j}A_i$ for each $j\in[s]$, we have
		$$\chi(B_i)\ge y^{1/2}\chi(S)\ge y\cdot \chi(A)\ge s^{-4}\chi(A)\quad\text{for all $j\in [s]$.}$$
		
		It suffices to show that $(B_i,B_j)$ is complete in $G$ for all distinct $j_1,j_2\in [s]$.
		Suppose not; then $G[S]$ has a nonedge $u_1u_2$ with $u_1\in B_{j_1}$ and $u_2\in B_{j_2}$.
		Let $i_1\in I_{j_1}$ and $i_2\in I_{j_2}$ satisfy $u_1\in A_{i_1}$ and $u_2\in A_{i_2}$; and we may assume $i_1<i_2$.
		Then, since $v_{i_1}$ is adjacent to $u_{2}$ in $G$, $G[\{u_{1},v,u_{2},v_{i_1},v_{i_2}\}]$ would be isomorphic to $P_5$ (if $u_{1},v_{i_2}$ are nonadjacent) or $C_5$ (if $u_{1},v_{i_2}$ are adjacent), a contradiction.
		This proves \cref{lem:keyc5}.
	\end{proof}
	
	Now we iterate \cref{lem:keyc5} to deduce that in every $\chi$-dense $\{P_5,C_5\}$-free graph, either there is a much $\chi$-denser induced subgraph with linear chromatic number, or there exists a restrained blockade that is polynomially balanced.
	
	\begin{lemma}
		\label{lem:increc5}
		For every $y\in(0,2^{-9}]$, every $(y,\chi)$-dense $\{P_5,C_5\}$-free graph $G$ contains either:
		\begin{itemize}
			\item a $(y^2,\chi)$-dense induced subgraph with chromatic number at least $\frac18\chi(G)$; or
			
			\item a restrained blockade of length $k$ and mass at least $k^{-8}\chi(G)$, for some integer $k\ge y^{-1/8}$.
		\end{itemize}
	\end{lemma}
	\begin{proof}
		Let $k\ge0$ be maximal such that $G$ contains a restrained blockade $(B_0,B_1,\ldots,B_k)$ with
		$$\chi(B_k)\ge(1-y^{1/4})^k\chi(G),\quad\text{and}\quad\chi(B_{i-1})\ge 2^{-4}y^2\chi(G)\quad\text{ for all $i\in[k]$.}$$
		
		If $k\ge 2^{1/2}y^{-1/4}$ then the second outcome of the lemma holds since $2^{-4}y^2= (2^{1/2}y^{-1/4})^{-8}\ge k^{-8}$, and we are done.
		Thus we may assume $k<2^{1/2}y^{-1/4}$, which yields
		\[\chi(B_k)\ge(1-y^{1/4})^k\chi(G)\ge 4^{-ky^{1/4}}\chi(G)\ge 4^{-2^{1/2}}\chi(G)\ge\chi(G)/8.\]
		If $G[B_k]$ is $(y^2,\chi)$-dense then the first outcome of the lemma holds; and so we may assume there exists $v\in B_k$ with $\chi(B_k\setminus N_G[v])\ge y^2\chi(B_k)\ge 2^{-4}y^2\chi(G)$.
		Let $A:=B_k\cap N_G(v)$ and $B:=B_k\setminus N_G[v]$.
		Then $\chi(B)\ge 2^{-4}y^2\chi(G)$; and so $B$ is nonempty, which yields $\chi(B)=\chi(B_k\setminus N_G(v))$.
		Also, since $G$ is $(y,\chi)$-dense, we have $\chi(B)\le y\cdot\chi(G)\le 8y\cdot\chi(B_k)$, which implies
		\[\chi(A)\ge \chi(B_k)-\chi(B_k\setminus N_G(v))
		=\chi(B_k)-\chi(B)
		\ge (1-8y)\chi(B_k)
		\ge \chi(B_k)/2
		\ge \chi(G)/16\]
		where third equation holds since $G$ is $(y,\chi)$-dense.
		Hence $\chi(A\setminus N_G(u))\le y\cdot\chi(G)\le 16y\cdot\chi(A)$ for all $u\in B$.
		Therefore, \cref{lem:keyc5} (with $16y\in(0,2^{-5}]$ in place of $y$) implies that either:
		\begin{itemize}
			\item there exists $D\subset A$ such that $\chi(D)\ge (1-(16y)^{1/2})\chi(A)$ and $(B,D)$ is restrained in $G[B_k]$; or
			
			\item $G[A]$ contains a complete blockade of length $\ell$ and mass at least $\ell^{-4}\chi(A)$,
			for some integer $\ell\ge(16y)^{-1/4}$.
		\end{itemize}
		
		If the first bullet holds, then since $y\in(0,2^{-9}]$, we have
		\begin{align*}
			\chi(D)\ge(1-(16y)^{1/2})\chi(A)
			&\ge (1-(16y)^{1/2})(1-8y)\chi(B_k)\\
			&\ge (1-(16y)^{1/2}-8y)\chi(B_k)
			\ge (1-y^{1/4})\chi(B_k)
			\ge (1-y^{1/4})^{k+1}\chi(G)
		\end{align*}
		but then $(B_0,B_1,\ldots,B_{k-1},B,D)$ would violate the maximality of $k$ because
		$$\sum_{i\in[k]}\omega(B_{i-1})+\omega(B)+\omega(D)\le\sum_{i\in[k]}\omega(B_{i-1})+\omega(B_k)\le\omega(G).$$
		Hence the second bullet holds, which yields the second outcome of the lemma since $\ell\ge (16y)^{-1/4}\ge y^{-1/8}\ge2$ and $\ell^{-4}\chi(A)\ge 2^{-4}\ell^{-4}\chi(G)\ge \ell^{-8}\chi(G)$.
		This proves \cref{lem:increc5}.
	\end{proof}
	
	We also require the following simple property of $(\eps,\chi)$-dense graphs.
	
	\begin{lemma}
		\label{lem:dense}
		For every $\eps>0$, if $G$ is $(\eps,\chi)$-dense, then $\omega(G)\ge\min(\eps^{-1},\chi(G))$.
	\end{lemma}
	\begin{proof}
		Let $K$ be a maximum clique in $G$; then every vertex in $V(G)\setminus K$ has a nonneighbour in $G$.
		Thus, since $\chi(G\setminus N_G(v))=\max(1,\chi(G\setminus N_G[v]))\le \max(1,\eps\cdot\chi(G))$ for all $v\in V(G)$, we have $\chi(G)\le \sum_{v\in K}\chi(G\setminus N_G(v))\le \abs K\cdot \max(1,\eps\cdot\chi(G))$.
		Hence either $\chi(G)\le \abs K$ or $\chi(G)\le \abs K\cdot \eps\cdot\chi(G)$, and the latter case yields $\abs K\ge\eps^{-1}$.
		This proves \cref{lem:dense}.
	\end{proof}
	
	By combining \cref{lem:dense,lem:increc5}, we can run chromatic density increment on $\chi$-dense $\{P_5,C_5\}$-free graphs and prove \cref{lem:ext} in turn, as follows.
	
	\begin{proof}
		[Proof of \cref{lem:ext}]
		If $\chi(G)\le\eps^{-2}$ then $\omega(G)\ge \min(\eps^{-1},\chi(G))\ge\chi(G)^{1/2}$ by \cref{lem:dense}; and so the lemma holds with $k=\omega(G)$ and $\abs{B_i}=1$ for all $i\in[k]$ (so $B_1\cup\cdots\cup B_k$ is a maximum clique of $G$).
		Thus we may assume $\chi(G)>\eps^{-2}$.
		This allows us to set up $\chi$-density increment, by choosing a minimal $y\in[\chi(G)^{-1/2},\eps]$ for which $G$ has a $(y,\chi)$-dense induced subgraph $F$ with $\chi(F)\ge y\cdot\chi(G)$.
		Thus, \cref{lem:dense} implies 
		$$\omega(F)\ge\min(y^{-1},\chi(F))\ge \min(y^{-1},y\cdot \chi(G))\ge \min(\eps^{-1},\chi(G)^{1/2}).$$
		By \cref{lem:increc5}, $F$ contains either:
		\begin{itemize}
			\item a $(y^2,\chi)$-dense induced subgraph $J$ with $\chi(J)\ge\frac1{8}\chi(F)\ge \frac18y\cdot \chi(G)\ge y^2\chi(G)$; or
			
			\item a restrained blockade of length $k$ and mass at least $k^{-8}\chi(F)$, for some integer $k\ge y^{-1/8}$.
		\end{itemize}
		
		If the first bullet holds then $y\le \chi(G)^{-1/4}$ by the minimality of $y$; and so the lemma holds with $k=\omega(F)\ge\min(\eps^{-1},\chi(G)^{1/2})$ and $\abs{B_i}=1\ge y^{4}\chi(G)\ge k^{-4}\chi(G)$ for all $i\in[k]$.
		If the second bullet holds then the lemma also holds because $k\ge y^{-1/8}\ge \eps^{-1/8}$ and $k^{-8}\chi(F)\ge k^{-8}\cdot y\cdot\chi(G)\ge k^{-16}\chi(G)$.
		This proves \cref{lem:ext}.
	\end{proof}
	
	\section{Chromatic density increment, first round}
	\label{sec:1incre}
	The rest of this paper is devoted to the proof of \cref{lem:incre}.
	This section executes the first round of chromatic density increment in a manner similar to~\cite[Section 5]{density7}. The main result of the section is the following chromatic analogue of~\cite[Lemma 5.1]{density7}.
	
	\begin{lemma}
		\label{lem:incre1}
		There exists $a\ge2$ such that for every $x\in(0,2^{-a}]$ and for every $P_5$-free graph $G$ with $\chi(G)\ge x^{-a}$, there exist an integer $k\in[2,x^{-1}]$ and a pure or $(x,\chi)$-dense blockade in $G$ of length $k$ and mass at least $k^{-a}\chi(G)$.
	\end{lemma}
	
	As mentioned in \cref{subsec:incre}, to prove \cref{lem:incre1} we will need to handle our $\chi$-dense $P_5$-free graphs with more care and circumvent technical issues involving counting. The following simple lemma will be useful for this purpose.
	
	\begin{lemma}
		\label{lem:avgalt}
		Let $r>0$, and let $G$ be a graph with nonempty disjoint $A,B\subset V(G)$, such that $\chi(A\setminus N_G(v))\le r$ for all $v\in B$. Then for every $y\in(0,1)$, either:
		\begin{itemize}
			\item there are $X\subset A$ and $Y\subset B$ with $\chi(X)\ge \chi(A)-r$ and $\chi(Y)\ge y^2\chi(B)$, such that $X$ is $(y,\chi)$-dense to $Y$; or
			
			\item there exists nonempty $D\subset A$ such that the set $E$ of vertices in $B$ with no neighbour in $D$ satisfies $y^2\chi(B)\le\chi(E)\le y\cdot\chi(B)$.
		\end{itemize}
	\end{lemma}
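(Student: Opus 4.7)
The plan is to assume the second outcome fails and construct the first outcome from an inclusion-maximal ``cover'' of $B$ by non-neighbourhoods from $A$. I would introduce the family
\[\mathcal{D} := \{D \subseteq A : \chi(E_D) > y \chi(B)\}, \qquad \text{where } E_D := B \setminus \bigcup_{u \in D} N_G(u).\]
Since $y < 1$, we have $\emptyset \in \mathcal{D}$; as $A$ is finite, pick an inclusion-maximal $D^* \in \mathcal{D}$ and set $Y := E_{D^*}$ and $X := A \setminus D^*$. I would then verify that $(X, Y)$ witnesses the first outcome.

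For the density condition, the maximality of $D^*$ gives $D^* \cup \{u\} \notin \mathcal{D}$ for every $u \in X$, so $\chi(E_{D^* \cup \{u\}}) \le y \chi(B)$; since the second outcome fails and $D^* \cup \{u\}$ is a nonempty subset of $A$, this chromatic number avoids the window $[y^2 \chi(B), y \chi(B)]$ and must therefore be strictly less than $y^2 \chi(B)$. Because $\chi(Y) > y \chi(B)$, it follows that $\chi(Y \setminus N_G(u)) = \chi(E_{D^* \cup \{u\}}) < y^2 \chi(B) \le y \chi(Y)$, which confirms that $X$ is $(y, \chi)$-dense to $Y$; and $\chi(Y) \ge y^2 \chi(B)$ is immediate from $\chi(Y) > y \chi(B)$.

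The bound $\chi(X) \ge \chi(A) - r$ reduces to $\chi(D^*) \le r$: since $\chi(Y) > 0$, there exists $v \in Y = E_{D^*}$, and by construction of $E_{D^*}$ no vertex of $D^*$ is adjacent to $v$, so $D^* \subseteq A \setminus N_G(v)$ and hence $\chi(D^*) \le \chi(A \setminus N_G(v)) \le r$. Subadditivity of chromatic number (the third measure axiom) then gives $\chi(X) \ge \chi(A) - \chi(D^*) \ge \chi(A) - r$. The main obstacle in this plan is conceptual rather than technical: a naive greedy approach that peels vertices off $Y$ one at a time cannot easily ensure $\chi(Y)$ stays above $y^2 \chi(B)$ while avoiding a slide into the forbidden window, so the key idea is to replace iteration with a single inclusion-maximal cover $D^*$ and to exploit the hypothesis on $B$ through just one witness vertex $v \in E_{D^*}$.
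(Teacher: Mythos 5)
Your proof is correct, and it is essentially the paper's argument run in mirror image: the paper assumes the first outcome fails and takes a common non-neighbourhood $E=\bigcap_{v\in D}(B\setminus N_G(v))$ that is minimal subject to $\chi(E)\ge y^2\chi(B)$, whereas you assume the second outcome fails and take $D^*$ maximal subject to $\chi(E_{D^*})>y\cdot\chi(B)$; in both cases the same witness-vertex trick gives $\chi(D)\le r$ and the same threshold comparison yields the $(y,\chi)$-density. (If anything, your maximality formulation sidesteps the small subtlety in the paper's version concerning vertices of $A\setminus D$ with no neighbour in $E$.)
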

	\begin{proof}
		Assume that the first outcome fails.
		Then there exists $u\in A$ with $\chi(B\setminus N_G(u))\ge y\cdot\chi(B)$.
		Hence there exists $D\subset A$ maximal such that $\chi(E)\ge y^2\chi(B)$ where $E=\bigcap_{v\in D}(B\setminus N_G(v))$.
		In particular $E$ is nonempty; and so the hypothesis implies $\chi(D)\le r$, which gives $\chi(A\setminus D)\ge\chi(A)-r$.
		Also, the maximality of $D$ yields $\chi(E\setminus N_G(v))\le y^2\chi(B)$ for all $v\in A\setminus D$; and so $\chi(E)\le y\cdot\chi(B)$ since the first outcome of the lemma fails.
		This proves \cref{lem:avgalt}.
	\end{proof}
	We now carry out the first main step of this section. We are given a $(y,\chi)$-dense $P_5$-free graph $G$ for some sufficiently small `floating' chromatic density parameter $y$. If $G$ is $(y^2,\chi)$-dense then our increment step proceeds; so we may assume there is some $v\in V(G)$ whose nonneighbourhood in $G$ has chromatic number at least $y^2\chi(G)$. We would like to discard a low-$\chi$ portion of $N_G(v)$ and shrink $V(G)\setminus N_G[v]$ by a chromatic factor of $\operatorname{poly}(y)$ so that the new neighbourhood of $v$ is $(\operatorname{poly}(y),\chi)$-dense to its new nonneighbourhood.
	If this never occurs, we iterate \cref{lem:avgalt} and chase induced five-vertex paths to extract appropriate pure blockades within $V(G)\setminus N_G[v]$.
	This is done rigorously by the following.
	
	\begin{lemma}
		\label{lem:avgp5}
		Let $r>0$, let $y\in(0,\frac12]$, and let $q$ be an integer with $1\le q\le \frac12y^{-1}$.
		Let $G$ be a $P_5$-free graph with $v\in V(G)$, and let $A\subset N_G(v)$ and $B\subset V(G)\setminus N_G[v]$ satisfy $\chi(A)\ge qr$ and $\chi(A\setminus N_G(v))\le r$ for all $v\in B$.
		Then either:
		\begin{itemize}
			\item there are $X\subset A$ and $Y\subset B$ with $\chi(X)\ge \chi(A)-qr$ and $\chi(Y)\ge \frac12y^2\chi(B)$, such that $X$ is $(y,\chi)$-dense to $Y$; or
			
			\item $G[B]$ contains a pure blockade of length $q$ and mass at least $\frac12y^2\chi(B)$.
		\end{itemize}
	\end{lemma}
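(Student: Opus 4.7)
The idea is to iteratively apply \cref{lem:avgalt}, building either the first outcome of the lemma or a pure blockade for the second. Set $A_0:=A$, $B_0:=B$, and for $i=1,\dots,q$ apply \cref{lem:avgalt} to $(A_{i-1},B_{i-1})$ with parameter $y$; the hypothesis $\chi(A_{i-1}\setminus N_G(u))\le r$ for $u\in B_{i-1}$ is inherited since $A_{i-1}\subseteq A$, $B_{i-1}\subseteq B$. Whenever \cref{lem:avgalt}'s second outcome fires, extract the nonempty $D_i\subseteq A_{i-1}$ and $E_i\subseteq B_{i-1}$ with $E_i$ anticomplete to $D_i$ and $y^{2}\chi(B_{i-1})\le\chi(E_i)\le y\chi(B_{i-1})$, then set $A_i:=A_{i-1}\setminus D_i$, $B_i:=B_{i-1}\setminus E_i$, and continue. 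The bookkeeping is $\chi(A_{i-1})\ge\chi(A)-(i-1)r>0$ since $\chi(A)\ge qr$, and $\chi(B_{i-1})\ge(1-y)^{i-1}\chi(B)\ge\tfrac12\chi(B)$ by Bernoulli's inequality and $qy\le\tfrac12$. In particular each $E_i$ satisfies $\chi(E_i)\ge\tfrac12y^{2}\chi(B)$, matching the per-block target of the second outcome, and the $E_i$'s are pairwise disjoint.

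If \cref{lem:avgalt}'s second outcome fires in all $q$ rounds, then $(E_1,\ldots,E_q)$ is a disjoint blockade with the correct chromatic lower bound, and the remaining task is to verify purity — this is where the $P_5$-free hypothesis plays its role. For $i<j$, suppose $b_i\in E_i$ and $b_j\in E_j$ were adjacent: since $E_j\subseteq B_i$, $b_j$ has some neighbour $u\in D_i$, and $u\nsim b_i$ because $E_i$ is anticomplete to $D_i$, so $v\text-u\text-b_j\text-b_i$ is an induced $P_4$. To derive an induced $P_5$ contradiction I would refine the iterative choice of $E_i$ to a maximum-chromatic component of the Case-2 output (hence $G$-connected), providing a neighbour $z\in E_i$ of $b_i$ which is automatically nonadjacent to $u\in D_i$; combined with a short case analysis using the connectivity of $G[E_j]$ and possibly \cref{lem:mixed}, this forces $z\nsim b_j$ and gives the desired induced $P_5$. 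If instead \cref{lem:avgalt}'s first outcome fires at some round $i\le q$, we obtain $X_i\subseteq A_{i-1}$ and $Y_i\subseteq B_{i-1}$ with $\chi(X_i)\ge\chi(A)-qr$, $\chi(Y_i)\ge\tfrac12y^{2}\chi(B)$, and $X_i$ being $(y,\chi)$-dense to $Y_i$. Since the target of the first outcome here is the stronger $\chi(Y)\ge\tfrac12y\chi(B)$, I would amalgamate $Y_i$ with suitable subsets of the previously extracted $E_1,\ldots,E_{i-1}$: the analysis in the proof of \cref{lem:avgalt} yields $\chi(E_j\setminus N_G(u))\le y^{2}\chi(B_{j-1})$ for every $u\in A_j$ and $j<i$, so $X_i\subseteq A_{i-1}\subseteq A_j$ has only $y^{2}\chi(B)$-worth of non-neighbours in each $E_j$, and adjoining the $E_j$'s to $Y_i$ preserves $(y,\chi)$-density up to minor losses. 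Since $q\le 1/(2y)$, the target $\tfrac12y\chi(B)$ is exactly $q$ times the per-round bound $\tfrac12y^{2}\chi(B)$, so aggregating at most $q$ blocks reaches the required chromatic number.

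The main obstacle I expect is the purity argument: producing the extending vertex $z$ for the $P_5$ contradiction requires a genuine structural refinement of the $E_i$'s, because the iterative construction directly controls adjacency only between $E_i$ and $D_i$, not across distinct $E_i,E_j$. The component-refinement sketched above likely needs to be complemented by a minimality or maximality clause on $(D_i,E_i)$ and by invoking \cref{lem:mixed} on connected pieces of each $E_i$ before the final induced $P_5$ can be assembled; the aggregation step in the first-outcome branch, which must preserve the tight $(y,\chi)$-density while growing $Y_i$, is the other technical point demanding care.
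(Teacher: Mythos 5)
Your iteration is the same construction as the paper's (the paper packages it as a maximal family of pairs $(D_i,E_i)$, where $E_i$ is the set of vertices of $B\setminus(E_1\cup\cdots\cup E_{i-1})$ with no neighbour in $D_i$ and $D_i$ is the set of \emph{all} vertices of $A\setminus(D_1\cup\cdots\cup D_{i-1})$ with no neighbour in $E_i$), but your purity step aims at the wrong target and would fail. You try to extract an induced $P_5$ from a single edge $b_ib_j$ between two blocks, i.e.\ to prove the blockade is \emph{anticomplete}; that is false in general --- two blocks may well be complete to each other, which is exactly why the lemma only promises a \emph{pure} blockade --- so no case analysis can ``force $z\not\sim b_j$'': if the component of $b_i$ is complete to the component of $b_j$, every candidate $z$ is adjacent to $b_j$ and no contradiction arises. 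What the paper proves (\cref{claim:avgp52}) is that no vertex of one block is \emph{mixed} on another: taking $B_i$ to be a maximal (connected) component of $G[E_i]$, if $u\in B_j$ with $j>i$ is mixed on $B_i$, then connectivity gives an edge $u'v'$ of $G[B_i]$ with $u\sim u'$, $u\not\sim v'$, and since $u\notin E_i$ it has a neighbour $z\in D_i$, whence $v\text-z\text-u\text-u'\text-v'$ is an induced $P_5$; the reverse direction (a vertex of the earlier block mixed on the later one) then uses the purity just established together with a vertex of $D_j$ having a neighbour in the earlier block. That last step is also why the paper closes each $D_i$ up to the \emph{full} set of vertices with no neighbour in $E_i$; your iteration keeps only the raw $D$ supplied by \cref{lem:avgalt}, so even after redirecting your argument to mixed vertices you would lack the fact that every $z\in D_j$ has a neighbour in $E_i$ for $i<j$.

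The first-outcome branch has a second genuine gap: chromatic number is not additive over a union of disjoint blocks, so amalgamating $Y_i$ with $E_1,\ldots,E_{i-1}$ only guarantees $\chi(Y)\ge\max$ of the pieces, not their sum, and neither the claimed boost from $\tfrac12y^{2}\chi(B)$ to $\tfrac12y\chi(B)$ nor the preservation of $(y,\chi)$-density to the enlarged set follows. No such boosting happens in the paper either: its proof of \cref{claim:avgp51} produces the first outcome only with $\chi(Y)\ge y^{2}\chi(B')\ge\tfrac12y^{2}\chi(B)$, and that weaker bound is exactly what is used when \cref{lem:avgp5} is invoked in \cref{lem:1dense2}; the $\tfrac12y\cdot\chi(B)$ in the statement appears to be a slip for $\tfrac12y^{2}\chi(B)$. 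So the right move is to drop the aggregation, record the bound your iteration genuinely gives, and redo the purity step along the mixed-vertex lines above.
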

	\begin{proof}
		Assume that the first outcome fails.
		Let $k\ge0$ be maximal for which there are nonempty disjoint $D_1,\ldots,D_k\subset A$ and $B_1,\ldots,B_k\subset B$ such that for all $i\in[k]$:
		\begin{itemize}
			\item $\frac12y^2\chi(B)\le\chi(B_i)\le y\cdot\chi(B)$;
			
			\item $D_i$ is the set of vertices in $A\setminus(D_1\cup\cdots\cup D_{i-1})$ with no neighbour in $B_i$; and
			
			\item $G[B_i]$ is a maximal component of $G[E_i]$, where $E_i$ is the set of vertices in $B\setminus(B_1\cup\cdots\cup B_{i-1})$ with no neighbour in $D_i$.
		\end{itemize}
		(These conditions are clearly satisfied for $k=0$.) We claim that:
		
		\begin{claim}
			\label{claim:avgp51}
			$k\ge q$.
		\end{claim}
		\begin{subproof}
			Suppose not.
			Let $A':=A\setminus(D_1\cup\cdots\cup D_k)$ and $B':=B\setminus(B_1\cup\cdots\cup B_k)$.
			Because $k<q\le\frac12y^{-1}$ and $\chi(D_i)\le r$ for all $i\in[k]$, we have $\chi(A')\ge \chi(A)-kr>\chi(A)-qr\ge0$ (so $A'$ is nonempty); and
			$$\chi(B')\ge \chi(B)-k\cdot y\cdot\chi(B)\ge (1-qy)\chi(B)\ge\chi(B)/2.$$
			Thus, by \cref{lem:avgalt} applied to $(A',B')$, either:
			\begin{itemize}
				\item there are $X\subset A'$ and $Y\subset B'$ with $\chi(X)\ge\chi(A')-r$ and $\chi(Y)\ge y^2\chi(B')$, such that $X$ is $(y,\chi)$-dense to $Y$; or
				
				\item there exists nonempty $D\subset A'$ such that the set $E$ of vertices in $B'$ with no neighbour in $D$ satisfies $y^2\chi(B')\le\chi(E)\le y\cdot\chi(B')$.
			\end{itemize}
			
			The first bullet cannot hold because $\chi(A')-r\ge\chi(A)-(k+1)r\ge \chi(A)-qr$ (as $k+1\le q$ by our supposition), $y^2\chi(B')\ge\frac12y^2\chi(B)$, and the first outcome of the lemma fails.
			Thus the second bullet holds.
			Let $E'\subset E$ satisfy $G[E']$ is a maximal component of $G[E]$, and let $D'$ be the set of vertices in $A'=A\setminus(D_1\cup\cdots\cup D_k)$ with no neighbour in $E'$.
			Then $D\subset D'$ and $G[E']$ is a maximal component of $G[E'']$, where $E''$ is the set of vertices in $B'$ with no neighbour in $D'$ (note that $E'\subset E''\subset E$).
			But then, since $\chi(E')=\chi(E)\ge y^2\chi(B')\ge\frac12y^2\chi(B)$, we see that $(D_1,\ldots,D_k,D')$ and $(B_1,\ldots,B_k,E')$ would violate the maximality of $k$.
			This proves \cref{claim:avgp51}.
		\end{subproof}
		
		Now, we use the $P_5$-freeness of $G$ to deduce that $(B_1,\ldots,B_k)$ is pure. In what follows, for disjoint $X,Y\subset V(G)$, we say that $X$ is {\em pure} to $Y$ if every vertex in $X$ is pure to $Y$ in $G$; thus $(X,Y)$ is a pure pair in $G$ if and only if $X$ is pure to $Y$ and $Y$ is pure to $X$.
		
		\begin{figure}[ht]
			\centering
			
			\begin{tikzpicture}[scale=0.6,auto=left]
				\draw [] (-1.5,0) circle (1cm);
				
				\node [circle,draw] () at (-1.5,-4.25) [minimum size=1.65cm] {};
				\node [circle,draw] () at (2,-4.25) [minimum size=1.65cm] {};
				
				\draw [rounded corners] (-4,-2) rectangle (4.5,2);
				\draw [rounded corners] (-4,-6.25) rectangle (4.5,-2.25);
				\node[] at (-4.75,0) {$A$};
				\node[] at (-4.75,-4.25) {$B$};
				
				\node[] at (-3.35,-4.3) {$B_i$};
				\node[] at (-3,-0.05) {$D_i$};
				\node[] at (3.85,-4.3) {$B_j$};
				
				\node[inner sep=1.5pt, fill=black,circle,draw] (v) at ({0.25},{3.5}) {};
				\node[xshift=0.3cm] at (v) {$v$};
				\node[inner sep=1.5pt, fill=black,circle,draw] (u) at ({2},{-4.25}) {};
				\node[xshift=0.3cm] at (u) {$u$};
				\node[inner sep=1.5pt, fill=black,circle,draw] (z) at ({-1.5},{0}) {};
				\node[xshift=-0.3cm] at (z) {$z$};
				\node[inner sep=1.5pt, fill=black,circle,draw] (v') at ({-2},{-4.75}) {};
				\node[xshift=-0.2cm,yshift=0.25cm] at (v') {$v'$};
				\node[inner sep=1.5pt, fill=black,circle,draw] (u') at ({-1},{-3.75}) {};
				\node[xshift=-0.3cm,yshift=0.15cm] at (u') {$u'$};
				\draw[-] (v) -- (z) -- (u) -- (u') -- (v');
				\draw[dashed] (u') -- (z) -- (v') -- (u);

				\draw [] (14,0) circle (1cm);
				
				\node [circle,draw] (bi) at (10.5,-4.25) [minimum size=1.65cm] {};
				\node [circle,draw] () at (14,-4.25) [minimum size=1.65cm] {};
				
				\draw [rounded corners] (8,-2) rectangle (16.5,2);
				\draw [rounded corners] (8,-6.25) rectangle (16.5,-2.25);
				\node[] at (7.25,0) {$A$};
				\node[] at (7.25,-4.25) {$B$};
				\node[color=white] at (17.25,0) {$A$};
				\node[color=white] at (17.25,-4.25) {$B$};
				
				\node[] at (8.65,-4.3) {$B_i$};
				\node[] at (15.85,-4.3) {$B_j$};
				\node[] at (15.5,-0.05) {$D_j$};
				
				\node[inner sep=1.5pt, fill=black,circle,draw] (v0) at ({12.25},{3.5}) {};
				\node[xshift=-0.3cm] at (v0) {$v$};
				\node[inner sep=1.5pt, fill=black,circle,draw] (u0) at ({10.5},{-4.25}) {};
				\node[xshift=-0.3cm,yshift=0.075cm] at (u0) {$u''$};
				\node[inner sep=1.5pt, fill=black,circle,draw] (z0) at ({14},{0}) {};
				\node[xshift=0.3cm] at (z0) {$z$};
				\node[inner sep=1.5pt, fill=black,circle,draw] (v'0) at ({14.5},{-4.75}) {};
				\node[xshift=0.25cm,yshift=0.25cm] at (v'0) {$v'$};
				\node[inner sep=1.5pt, fill=black,circle,draw] (u'0) at ({13.5},{-3.75}) {};
				\node[xshift=0.325cm,yshift=0.15cm] at (u'0) {$u'$};
				\draw[-] (v0) -- (z0) -- (u0) -- (u'0) -- (v'0);
				\draw[dashed] (u'0) -- (z0) -- (v'0) -- (u0);
				
				\draw[-] (tangent cs:node=bi,point={(u'0)},solution=1) -- (u'0) -- (tangent cs:node=bi,point={(u'0)},solution=2);
				\draw[dashed] (tangent cs:node=bi,point={(u'0)},solution=1) -- (v'0) -- (tangent cs:node=bi,point={(v'0)},solution=2);
			\end{tikzpicture}
			
			\caption{Proof of \cref{claim:avgp52}.}
			\label{fig:avgp52}
		\end{figure}
		
		\begin{claim}
			\label{claim:avgp52}
			$(B_1,\ldots,B_k)$ is a pure blockade in $G[B]$.
		\end{claim}

		\begin{subproof}
			It suffices to show that $(B_i,B_j)$ is pure for all $1\le i<j\le k$.
			First, suppose that there exists $u\in B_j$ mixed on $B_i$.
			Since $G[B_i]$ is connected, it has an edge $u'v'$ such that $u$ is adjacent to $u'$ and nonadjacent to $v'$.
			If $u\in B_j$ has a neighbour $z\in D_i$
			then $v\text-z\text-u\text-u'\text-v'$ would be an induced $P_5$ in $G$, a contradiction. (See the left-hand side of \cref{fig:avgp52}.)
			Thus $u$ has no neighbour in $D_i$ and so $u\in E_i$; but then $B_i\cup u$ contradicts that $G[B_i]$ is a component of $G[E_i]$.
			Hence $B_j$ is pure to $B_i$.
			
			Now, suppose that there exists $u\in B_i$ mixed on $B_j$; then $G[B_j]$ has an edge $u'v'$ such that $u$ is adjacent to $u'$ and nonadjacent to $v'$.
			Then, since $B_j$ is pure to $B_i$, $u'$ is complete to $B_i$ and $v'$ is anticomplete to $B_i$.
			Because $D_j$ is nonempty, there exists $z\in D_j$.
			Since $D_i$ is the set of vertices in $A\setminus(D_1\cup\cdots\cup D_{i-1})$ with no neighbour in $B_i$, we see that $z\in D_j$ has a neighbour $u''\in B_i$.
			But then $v\text-z\text-u''\text-u'\text- v'$ would be an induced $P_5$ in $G$, a contradiction. (See the right-hand side of \cref{fig:avgp52}.)
			Hence $B_i$ is pure to $B_j$.
			This proves \cref{claim:avgp52}.
		\end{subproof}
		\cref{claim:avgp51,claim:avgp52} together complete the proof of \cref{lem:avgp5}.
	\end{proof}
	
	Now we are given a `chromatic density threshold' $x$ which acts as a fixed lower bound for our original floating chromatic density parameter $y$.
	For $v\in V(G)$ with $y^2\chi(G)\le\chi(G\setminus N_G[v])<y\cdot\chi(G)$, assume that we have obtained suitable $A\subset N_G(v)$ and $B\subset V(G)\setminus N_G(v)$ with $\chi(A)\ge(1-\operatorname{poly}(y))\chi(G)$ and $\chi(B)\ge \operatorname{poly}(y)\cdot \chi(G)$ such that $A$ is $(\operatorname{poly}(y),\chi)$-dense to $B$.
	We will combine a greedy `anticovering' argument (similar to but more painstaking than the proof of \cref{lem:keyc5}) with more induced $P_5$ chases, to either produce another appropriate pure blockade within $G[B]$, or remove a few vertices from $A$ (resulting in $X$ say) and further shrink $B$ by a chromatic factor of half (resulting in $Y$ say) so that $X$ is $(x,\chi)$-dense to $Y$ (a highly desirable outcome).
	This is done by the following lemma, which is the second step of the current section.
	
	\begin{lemma}
		\label{lem:1dense1}
		Let $x,y\in(0,2^{-8}]$ with $x\le y$, and let $G$ be a $P_5$-free graph. Let $v\in V(G)$, and let $A\subset N_G(v)$ and $B\subset V(G)\setminus N_G[v]$.
		If $\chi(A)\ge 2x^{-1}$ and $A$ is $(y,\chi)$-dense to $B$, then either:
		\begin{itemize}
			\item there are $X\subset A$ and $Y\subset B$ with $\chi(X)\ge\chi(A)-2x^{-1}$ and $\chi(Y)\ge\frac12\chi(B)$, such that $X$ is $(x,\chi)$-dense to $Y$; or
			
			\item $G[B]$ contains a pure blockade of length $k$ and mass at least $k^{-2}\chi(B)$, for some integer $k\in[y^{-1/2},x^{-2}]$.
		\end{itemize}
	\end{lemma}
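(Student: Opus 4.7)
The approach is to adapt the iterative ``one-sided non-neighbourhood'' construction used in \cref{lem:avgp5} to the present setting, where the density hypothesis runs from $A$ to $B$ rather than from $B$ to $A$. Assume the first outcome fails; we will derive the second.

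First I would set $Y^{0}:=B$ and iterate as follows. At step $i\ge 1$: either $A$ is $(x,\chi)$-dense to $Y^{i-1}$, in which case $X:=A$ and $Y:=Y^{i-1}$ witness the first outcome (provided the invariant $\chi(Y^{i-1})\ge\tfrac12\chi(B)$ holds); or there is some $u_i\in A$ with $\chi(Y^{i-1}\setminus N_G(u_i))\ge x\cdot\chi(Y^{i-1})$, and we set $E_i:=Y^{i-1}\setminus N_G(u_i)$ and $Y^{i}:=Y^{i-1}\cap N_G(u_i)$. The $(y,\chi)$-density hypothesis gives $\chi(E_i)\le\chi(B\setminus N_G(u_i))<y\cdot\chi(B)$, so by subadditivity of $\chi$ one has $\chi(Y^{i})>(1-iy)\chi(B)$, maintaining $\chi(Y^{i})\ge\tfrac12\chi(B)$ for all $i\le 1/(2y)$. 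Since $y\le 2^{-8}$, this window accommodates $k\ge y^{-1/2}$ iterations while keeping $\chi(E_i)\ge x\cdot\chi(Y^{i-1})\ge(x/2)\chi(B)$ throughout.

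Next I would stop the iteration at a suitable value $k$ inside $[y^{-1/2},x^{-2}]$ (roughly $k:=\lceil\sqrt{2/x}\,\rceil$, or $\lceil y^{-1/2}\rceil$ in the extreme regime) and take $B_i$ to be a maximal component of $G[E_i]$, so that $\chi(B_i)=\chi(E_i)\ge k^{-2}\chi(B)$ as required. Finally I would verify that $(B_1,\ldots,B_k)$ is a pure blockade by a short induced $P_5$ chase. The key structural point is that for all $i<j$ one has $B_j\subset Y^{j-1}\subset N_G(u_i)$, while $B_i\subset B\setminus N_G(u_i)$. So if some $z\in B_j$ were mixed on $B_i$ with an edge $u'v'$ in the connected graph $G[B_i]$ satisfying $zu'\in E(G)$ and $zv'\notin E(G)$, then $v\text- u_i\text- z\text- u'\text- v'$ would be an induced $P_5$ in $G$ (using that $v$ is adjacent to $u_i\in A$ but anticomplete to $B$), a contradiction. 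A symmetric chase, exploiting the connectedness of $B_j$ and the impossibility of an edge in $B_j$ between a vertex complete to $B_i$ and one anticomplete to $B_i$, rules out mixed vertices from $B_i$, so $(B_i,B_j)$ is a pure pair.

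\textbf{Main obstacle.} The delicate point is the parameter balance between $k$, $x$ and $y$: the iteration naturally produces $\chi(E_i)\ge(x/2)\chi(B)$, while the conclusion demands $\chi(B_i)\ge k^{-2}\chi(B)$, and the cap $k\le 1/(2y)$ needed to preserve the $\chi(Y^{i-1})\ge\tfrac12\chi(B)$ invariant only matches the required lower bound on $k$ cleanly when $x\ge 8y^{2}$. Reconciling the regime $x\ll y^{2}$ needs a small preliminary case split --- for instance, checking directly whether the ``bad'' set $\{u\in A:\chi(B\setminus N_G(u))\ge x\cdot\chi(B)\}$ has chromatic number at most $2x^{-1}$, in which case removing it from $A$ already yields the first outcome with $Y=B$. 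The $P_5$ chase, in contrast, is routine given the template of \cref{claim:avgp52}.
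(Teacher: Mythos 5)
There is a genuine gap, and it sits exactly where you declared the step ``routine'': the second half of the purity claim. Your iteration picks an \emph{arbitrary} $u_i\in A$ with $\chi(Y^{i-1}\setminus N_G(u_i))\ge x\cdot\chi(Y^{i-1})$, and your $P_5$ chase only shows that each vertex of $B_j$ is pure to $B_i$ for $i<j$. For the pair $(B_i,B_j)$ to be pure you must also rule out an edge $u'v'$ of $G[B_j]$ with $u'$ complete and $v'$ anticomplete to $B_i$. The only $P_5$ available there is $v\text-u_j\text-u''\text-u'\text-v'$, which requires $u_j$ to have a neighbour $u''\in B_i$; but nothing forces this, since $\chi(B_i)\le\chi(E_i)<y\cdot\chi(B)$, so the $(y,\chi)$-density of $A$ to $B$ gives no vertex of $A$ a guaranteed neighbour in $B_i$. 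The template you cite, \cref{claim:avgp52}, gets this neighbour from the mutual definition of the pairs $(D_i,E_i)$ in \cref{lem:avgp5}, which has no analogue in your construction. The paper's proof of the present lemma instead chooses $v_i$ \emph{greedily}, maximising the chromatic number of the non-neighbourhood inside the current common neighbourhood; then, when $v_j$ has no neighbour in $B_i$, the set $B_i\cup\{u'\}$ lies in $(\bigcap_{p<i}N_G(v_p)\cap B)\setminus N_G(v_j)$ and has chromatic number strictly larger than $\chi(E_i)$, contradicting the choice of $v_i$ (see \cref{claim:1dense11}). Your arbitrary choice of $u_i$ provides no such leverage, so purity is simply not established.

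A second, acknowledged but unresolved, problem is the regime $x\ll y^{2}$. Your invariant $\chi(Y^{i})\ge\tfrac12\chi(B)$ is only guaranteed for about $(2y)^{-1}$ steps, while blocks of size $\ge(x/2)\chi(B)$ need length $k\ge\sqrt{2/x}$ to meet $\chi(B_i)\ge k^{-2}\chi(B)$; these are compatible only when $x\gtrsim y^{2}$. Your proposed fix covers only the subcase where $\{u\in A:\chi(B\setminus N_G(u))\ge x\chi(B)\}$ has chromatic number at most $2x^{-1}$; if it is larger you get neither outcome, and no alternative is offered. The paper avoids both issues at once: it stops at the threshold $x^{2}\chi(B)$, uses the greedy ordering (which makes $\chi(E_1)\ge\chi(E_2)\ge\cdots$) and a dyadic counting argument that exploits the assumed failure of the second outcome to bound the number of deleted vertices by $2x^{-1}$ and the deleted chromatic mass by $8y^{1/2}\chi(B)\le\tfrac12\chi(B)$, uniformly for all $x\le y$. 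So the two ingredients you dropped --- the extremal choice of the $v_i$'s and the dyadic bookkeeping --- are precisely what make the lemma work, and without them the proof does not go through.
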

	\begin{proof}
		Assume that the second outcome fails.
		Let $v_1\in A$ satisfy $\chi(B\setminus N_G(u))\le\chi(B\setminus N_G(v_1))$ for all $u\in A$;
		and inductively for $j\ge1$, assuming that $v_1,\ldots,v_j$ have been defined, we choose $v_{j+1}\in A\setminus\{v_1,\ldots,v_j\}$ such that the set of nonneighbours of $v_{j+1}$ in $\bigcap_{i\in[j]}N_G(v_i)\cap B$ has maximum chromatic number among that of any vertex $u\in A\setminus\{v_1,\ldots,v_j\}$; that is, for all such~$u$,
		$$\chi\left(\left(\bigcap_{i\in[j]}N_G(v_i)\cap B\right)\bigg\backslash N_G(u)\right)\le \chi\left(\left(\bigcap_{i\in[j]}N_G(v_i)\cap B\right)\bigg\backslash N_G(v_{j+1})\right).$$
		This gives an ordering $(v_1,\ldots,v_s)$ of $A$, where $s=\abs A$.
		For each $j\in[s]$, let $E_j$ be the set of nonneighbours of $v_j$ in $\bigcap_{i\in[j-1]}N_G(v_i)\cap B$,
		where we adopt the convention that $\bigcap_{i\in[j-1]}N_G(v_i)\cap B=B$ when $j=1$.
		Since $A$ is $(y,\chi)$-dense to $B$, it follows that
		$$y\cdot \chi(B)\ge\chi(E_1)\ge\chi(E_2)\ge\ldots\ge \chi(E_s).$$
		
		If $\chi(E_1)<x^2\chi(B)$ then the first outcome of the lemma holds with $X=A$ and $Y=B$;
		and so we may assume that $\chi(E_1)\ge x^2\chi(B)$.
		Hence there exists $\ell\ge1$ maximal such that $ \chi(E_\ell)\ge x^2\chi(B)$.
		For each $i\in[\ell]$, let $B_i\subset E_i$ satisfy $G[B_i]$ is a maximal component of $G[E_i]$.
		We now use the $P_5$-free hypothesis to show that $(B_1,\ldots,B_\ell)$ is pure in $G$, in a manner similar to \cref{claim:avgp52}.
		This is also where our greedy selection of $(v_1,\ldots,v_s)$ comes into play.
		\begin{claim}
			\label{claim:1dense11}
			$(B_1,\ldots,B_\ell)$ is a pure blockade in $G$.
		\end{claim} 
		\begin{subproof}
			It suffices to show that $(B_i,B_j)$ is pure for all $1\le i<j\le\ell$.
			First, suppose that there exists $u\in B_j$ mixed on $B_i$. Then $G[B_i]$ has an edge $u'v'$ such that $u$ is adjacent to $u'$ and nonadjacent to $v'$; and so $v\text-v_i\text-u\text-u'\text-v'$ would be an induced $P_5$ in $G$, a contradiction (See the left-hand side of \cref{fig:avgp52} for a reference, with $v_i$ replacing $z$.). Hence $B_j$ is pure to $B_i$.
			
			Now, suppose that there exists $u\in B_i$ mixed on $B_j$; then $G[B_j]$ has an edge $u'v'$ such that $u$ is adjacent to $u'$ and nonadjacent to $v'$. Then, since $B_j$ is pure to $B_i$, $u'$ is complete to $B_i$ and $v'$ is anticomplete to $B_i$.
			If $v_j$ has a neighbour $u''\in B_i$, then $v\text-v_j\text-u''\text-u'\text-v'$ would be an induced $P_5$ in $G$, a contradiction. (See the right-hand side of \cref{fig:avgp52} for a reference, with $v_j$ replacing $z$.)
			Hence $v_j$ has no neighbour in $B_i$.
			It follows that the set of nonneighbours of $v_j$ in $\bigcap_{p\in[i-1]}N_G(v_p)\cap B$ contains $B_i\cup\{u'\}$, and so has chromatic number at least
			\[\chi(B_i\cup\{u'\})>\chi(B_i)=\chi(E_i).\]
			Since $E_i$ is the set of nonneighbours of $v_i$ in $\bigcap_{p\in[i-1]}N_G(v_p)\cap B$,
			this violates the choice of $(v_1,\ldots,v_s)$.
			Thus $B_i$ is pure to $B_j$. This proves \cref{claim:1dense11}.
		\end{subproof}
		
		Given \cref{claim:1dense11}, a simple dyadic partitioning argument now shows that both $\ell$ and $\chi(E_1\cup\cdots\cup E_\ell)$ cannot be large, as follows.
		\begin{claim}
			\label{claim:1dense12}
			$\ell\le 2x^{-1}$ and $\chi(E_1\cup\cdots\cup E_\ell)\le \frac12\chi(B)$.
		\end{claim}
		\begin{subproof}
			Let $q:=\ceil{\log\frac1x-\frac12\log \frac1y}\ge1$. For every $p\in[q]$, let $$I_p:=\{j\in[\ell]:2^{2p-2}x^2\chi(B)\le\chi(E_j)\le 2^{2p}x^2\chi(B)\}$$
			then $[\ell]=I_1\cup\cdots\cup I_q$.
			For each $p\in[q]$, note that
			$$2^{1-p}x^{-1}\ge 2^{1-q}x^{-1}\ge 2^{\frac12\log\frac1y-\log\frac1x}x^{-1}=y^{-1/2}.$$
			Thus, by \cref{claim:1dense11} and since $\chi(B_j)=\chi(E_j)$ for all $j\in[\ell]$, if there exists $p\in[q]$ with $\abs{I_p}\ge 2^{1-p}x^{-1}$ then the second of the lemma holds with $k=\ceil{2^{1-p}x^{-1}}\le\ceil{x^{-1}}\le 2x^{-1}\le x^{-2}$, contrary to our initial assumption.
			Therefore $\abs{I_p}\le 2^{1-p}x^{-1}$ for all $p\in[q]$. It follows that
			\[
			\ell=\sum_{p\in[q]}\abs{I_p}\le \sum_{p\in[q]}2^{1-p}x^{-1}\le 2x^{-1},\]
			and since $2^q\le 2^{1+\log\frac1x-\frac12\log\frac1y}=2x^{-1}y^{1/2}\le \frac18x^{-1}$ because $y\le 2^{-8}$, we also have
			\begin{align*}
				\chi(E_1\cup\cdots\cup E_\ell)
				\le \sum_{p\in[q]}\sum_{j\in I_p}\chi(E_j)
				&\le\sum_{p\in[q]}2^{1-p}x^{-1}\cdot 2^{2p}x^2\chi(B)\\
				&=x\cdot\chi(B)\sum_{p\in[q]}2^{1+p}
				\le 2^{2+q}x\cdot\chi(B)
				\le\chi(B)/2.
				\qedhere
			\end{align*}
		\end{subproof}
		Now, let $X:=A\setminus\{v_1,\ldots,v_\ell\}$ and $Y:=B\setminus(E_1\cup\cdots\cup E_\ell)$.
		\cref{claim:1dense12} yields $\chi(X)\ge\chi(A)-2x^{-1}$ and $\chi(Y)\ge\frac12\chi(B)$; and
		the maximality of $\ell$ implies that every $u\in X$ satisfies
		$$\chi(Y\setminus N_G(u))< x^2\chi(B)\le 2x^2\chi(Y)\le x\cdot\chi(Y).$$
		Thus $X$ is $(x,\chi)$-dense to $Y$, verifying the first outcome of the lemma.
		This proves \cref{lem:1dense1}.
	\end{proof}
	
	We next choose an appropriate $q$ in \cref{lem:avgp5} and combine it with \cref{lem:1dense1} to deduce:
	
	\begin{lemma}
		\label{lem:1dense2}
		Let $r>0$, let $x,y\in(0,2^{-8}]$ with $x\le y$, and let $G$ be a $P_5$-free graph with $v\in V(G)$.
		Let $A\subset N_G(v)$ and $B\subset V(G)\setminus N_G[v]$ satisfy $\chi(A)\ge 2y^{-1/2}r+2x^{-1}$ and $\chi(A\setminus N_G(v))\le r$ for all $v\in B$.
		Then either:
		\begin{itemize}
			\item there are $X\subset A$ and $Y\subset B$ with $\chi(X)\ge\chi(A)-2y^{-1/2}r-2x^{-1}$ and $\chi(Y)\ge\frac14y^2\chi(B)$, such that $X$ is $(x,\chi)$-dense to $Y$; or
			
			\item $G[B]$ contains a pure blockade of length $k$ and mass at least $k^{-7}\chi(B)$, for some integer $k\in[y^{-1/2},x^{-2}]$.
		\end{itemize}
	\end{lemma}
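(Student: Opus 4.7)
The plan is to chain the two preceding lemmas: first apply Lemma \ref{lem:avgp5} to $(v,A,B)$ with a suitable choice of $q$, and then, in its ``dense-to'' branch, feed the resulting triple into Lemma \ref{lem:1dense1} to tighten the density parameter from $y$ down to $x$.

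For the first step I would take $q:=\lceil y^{-1/2}\rceil$. The constraint $q\le\tfrac12 y^{-1}$ of Lemma \ref{lem:avgp5} is comfortable because $y\le 2^{-8}$, and the hypothesis $\chi(A)\ge qr$ follows from $\chi(A)\ge 2y^{-1/2}r$ since $q\le 2y^{-1/2}$. If Lemma \ref{lem:avgp5} returns its pure blockade $(B_1,\ldots,B_q)$ with $\chi(B_i)\ge\tfrac12 y^2\chi(B)$, then because $q\in[y^{-1/2},2y^{-1/2}]$ and $y\le 2^{-8}$ a direct numerical check gives $\chi(B_i)\ge q^{-7}\chi(B)$, so taking $k:=q$ (truncating to $\lfloor x^{-2}\rfloor$ blocks if $q$ overshoots $x^{-2}$) yields the second outcome of the present lemma. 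Otherwise the first outcome of Lemma \ref{lem:avgp5} produces $X'\subset A$ and $Y'\subset B$ with $\chi(X')\ge\chi(A)-qr\ge 2x^{-1}$ and $\chi(Y')\ge\tfrac12 y\,\chi(B)$ such that $X'$ is $(y,\chi)$-dense to $Y'$; the extra $+2x^{-1}$ in the hypothesis on $\chi(A)$ is exactly what ensures the input condition $\chi(X')\ge 2x^{-1}$ required by Lemma \ref{lem:1dense1}.

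For the second step I would apply Lemma \ref{lem:1dense1} to $(v,X',Y')$. Its first outcome gives $X\subset X'$ and $Y\subset Y'$ with
\[
\chi(X)\ge\chi(X')-2x^{-1}\ge\chi(A)-2y^{-1/2}r-2x^{-1}\quad\text{and}\quad \chi(Y)\ge\tfrac12\chi(Y')\ge\tfrac14 y^2\chi(B),
\]
and $X$ is $(x,\chi)$-dense to $Y$, which matches the first outcome here. Its second outcome gives a pure blockade in $G[Y']\subset G[B]$ with $k\in[y^{-1/2},x^{-2}]$ and $\chi(B_i)\ge k^{-2}\chi(Y')\ge\tfrac12 k^{-2}y\,\chi(B)$; using $k\ge y^{-1/2}$ the inequality $\tfrac12 k^{-2}y\ge k^{-7}$ reduces to $k^5\ge 2/y$, which holds because $k^5\ge y^{-5/2}$ and $y\le 2^{-8}$, giving the second outcome here.

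The main difficulty is purely arithmetic: tracking the cascade of slack terms (the $-qr$ from Lemma \ref{lem:avgp5} followed by the $-2x^{-1}$ from Lemma \ref{lem:1dense1}), the two successive chromatic shrinkages (a factor $\tfrac12 y$ and then $\tfrac12$), and the exponent inflation from $k^{-2}$ to $k^{-7}$, while checking that the range $[y^{-1/2},x^{-2}]$ for $k$ survives all manipulations under $x,y\le 2^{-8}$. No new graph-theoretic argument beyond what is already contained in Lemmas \ref{lem:avgp5} and \ref{lem:1dense1} is required.
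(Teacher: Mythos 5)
Your proposal follows the paper's proof essentially verbatim: apply Lemma \ref{lem:avgp5} with $q=\lceil y^{-1/2}\rceil$ (checking $q\le\frac12y^{-1}$ and $\chi(A)\ge qr$ exactly as the paper does), then feed its $(y,\chi)$-dense outcome into Lemma \ref{lem:1dense1}, with the same bookkeeping of the slack terms $qr\le 2y^{-1/2}r$ and $2x^{-1}$ and the same arithmetic giving the $k^{-7}$ bound. The only cosmetic difference is your truncation caveat for the blockade from Lemma \ref{lem:avgp5}; note that Lemma \ref{lem:1dense1} formally requires $x\le y$ anyway (an assumption implicitly in force here and in all applications), under which $\lceil y^{-1/2}\rceil\le x^{-2}$ holds automatically and no truncation is ever needed.
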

	\begin{proof}
		Assume that the second outcome fails.
		Let $q:=\ceil{y^{-1/2}}$; then $q\le 2y^{-1/2}\le\frac12y^{-1}$ since $y\le 2^{-8}$.
		By \cref{lem:avgp5}, either:
		\begin{itemize}
			\item there are $P\subset A$ and $Q\subset B$ with $\chi(P)\ge\chi(A)-qr\ge\chi(A)-2y^{-1/2}r$ and $\chi(Q)\ge\frac12y^2\chi(B)$, such that $P$ is $(y,\chi)$-dense to $Q$; or
			
			\item $G[B]$ contains a pure blockade of length $q$ and mass at least $\frac12y^2\chi(B)\ge q^{-5}\chi(B)$.
		\end{itemize}
		
		Since the second outcome of the lemma fails, the first bullet holds.
		Now, by \cref{lem:1dense1} applied to $(P,Q)$, either:
		\begin{itemize}
			\item there are $X\subset P$ and $Y\subset Q$ with $\chi(X)\ge\chi(P)-2x^{-1}\ge\chi(A)-2y^{-1/2}r-2x^{-1}$ and $\chi(Y)\ge \frac12\chi(Q)\ge\frac14y^2\chi(B)$, such that $X$ is $(x,\chi)$-dense to $Y$; or
			
			\item $G[Q]$ contains a pure blockade of length $k$ and mass at least $k^{-2}\chi(Q)$, for some integer $k\in[y^{-1/2},x^{-2}]$.
		\end{itemize}
		
		Since $k^{-2}\chi(Q)\ge\frac12k^{-2}y^2\chi(B)\ge k^{-7}\chi(B)$ and the second outcome of the lemma fails, the second bullet fails; and so the first bullet holds.
		Hence the first outcome of the lemma holds.
		This proves \cref{lem:1dense2}.
	\end{proof}
	
	In what follows, for $\eps>0$ and a graph $G$ with disjoint $X,Y\subset V(G)$, we say that the ordered pair $(X,Y)$ is {\em $(\eps,\chi)$-dense} in $G$ if $X$ is $(\eps,\chi)$-dense to $Y$ in $G$.
	We next choose a suitable value of $r$ (in terms of $x,y$) in \cref{lem:1dense2} to prepare for our $\chi$-density increment step:
	
	\begin{lemma}
		\label{lem:1incre1}
		Let $x,y\in(0,2^{-8}]$ with $x\le y$, and let $G$ be a $(y,\chi)$-dense $P_5$-free graph with $\chi(G)\ge x^{-2}$.
		If $G$ is not $(y^2,\chi)$-dense, then it contains either:
		\begin{itemize}
			\item an $(x,\chi)$-dense pair $(X,Y)$ with $\chi(X)\ge (1-3y^{1/2})\chi(G)$ and $\chi(Y)\ge\frac14y^4\chi(G)$; or
			
			\item a pure blockade of length $k$ and mass at least $k^{-11}\chi(G)$, for some integer $k\in[y^{-1/2},x^{-2}]$.
		\end{itemize}
	\end{lemma}
	\begin{proof}
		Assume that the second outcomes fails.
		Since $G$ is not $(y^2,\chi)$-dense, it has a vertex $v$ with $\chi(G\setminus N_G[v])\ge y^2\chi(G)\ge1=\chi(\{v\})$.
		Let $r:=y\cdot\chi(G)$, $A:=N_G(v)$, and $B:=V(G)\setminus N_G[v]$; then $\chi(B)=\chi(G\setminus A)$.
		Thus since $G$ is $(y,\chi)$-dense, $y\le 2^{-8}$, and $\chi(G)\ge x^{-2}\ge 4x^{-1}$, we have
		\begin{align*}
			\chi(A)\ge\chi(G)-\chi(B)
			&\ge \chi(G)-y\cdot\chi(G)\\
			&=(1-y-2y^{1/2})\chi(G)+2y^{-1/2}r
			\ge \chi(G)/2+2y^{-1/2}r
			\ge 2y^{-1/2}r+2x^{-1}.
		\end{align*}
		
		Thus, \cref{lem:1dense2} implies that either:
		\begin{itemize}
			\item there are $X\subset A$ and $Y\subset B$ with $\chi(X)\ge\chi(A)-2y^{-1/2}r-2x^{-1}$ and $\chi(Y)\ge\frac14y^2\chi(B)$, such that $X$ is $(x,\chi)$-dense to $Y$; or
			
			\item $G[B]$ contains a pure blockade of length $k$ and mass at least $k^{-7}\chi(B)$, for some integer $k\in[y^{-1/2},x^{-2}]$.
		\end{itemize}
		
		Since $k^{-7}\chi(B)\ge k^{-7}y^2\chi(G)\ge k^{-11}\chi(G)$ and the second outcome of the lemma fails, the second bullet fails; and so the first bullet holds.
		Then the first outcome of the lemma holds because (note that $y\le 2^{-8}$ and $\chi(G)
		\ge x^{-2}\ge 8y^{-1/2}x^{-1}$)
		$$\chi(X)\ge\chi(A)-2y^{-1/2}r-2x^{-1}\ge (1-y-2y^{1/2})\chi(G)-2x^{-1}\ge (1-3y^{1/2})\chi(G)$$ and $\chi(Y)\ge \frac14y^2\chi(B)\ge \frac14y^4\chi(G)$.
		This proves \cref{lem:1incre1}.
	\end{proof}
	
	We next iterate \cref{lem:1incre1} to turn its first outcome into an $(x,\chi)$-dense blockade that is polynomially balanced, as follows.
	
	\begin{lemma}
		\label{lem:1incre2}
		Let $x,y\in(0,2^{-12}]$ with $x\le y$, and let $G$ be a $(y,\chi)$-dense $P_5$-free graph with $\chi(G)\ge x^{-3}$.
		Then $G$ contains either:
		\begin{itemize}
			\item a $(y^{3/2},\chi)$-dense induced subgraph $F$ with $\chi(F)\ge\frac14\chi(G)$;
			
			\item an $(x,\chi)$-dense blockade of length at least $y^{-1/4}$ and mass at least $y^4\chi(G)$; or
			
			\item a pure blockade of length $k$ and mass at least $k^{-12}\chi(G)$, for some integer $k\in[y^{-1/4},x^{-2}]$.
		\end{itemize}
	\end{lemma}
	\begin{proof}
		Assume that the first two outcomes do not hold.
		Let $\ell\ge0$ be maximal such that $G$ contains an $(x,\chi)$-dense blockade $(A_0,A_1,\ldots,A_\ell)$ with
		\[\chi(A_\ell)\ge (1-6y^{1/2})^\ell\chi(G),\quad\text{and}\quad
		\chi(A_{i-1})\ge y^4\chi(G)\quad\text{for all $i\in[\ell]$}.
		\]
		Since the second outcome fails, we have $\ell<y^{-1/4}$.
		Because $y\le 2^{-12}$, it follows that
		\[\chi(A_\ell)\ge(1-6y^{1/2})^\ell\chi(G)\ge 4^{-6\ell y^{1/2}}\chi(G)\ge 4^{-6y^{1/4}}\chi(G)\ge \chi(G)/4\ge x^{-3}/4\ge x^{-2}\]
		and so $G[A_\ell]$ is $(4y,\chi)$-dense.
		Since the first outcome of the lemma fails, $G[A_\ell]$ is not $(y^{3/2},\chi)$-dense, and so not $(16y^2,\chi)$-dense since $16y^2\le y^{3/2}$.
		Thus, by \cref{lem:1incre1} applied to $G[A_\ell]$ with $4y\le 2^{-8}$ in place of $y$, this graph contains either:
		\begin{itemize}
			\item an $(x,\chi)$-dense pair $(X,Y)$ with
			$\chi(X)\ge (1-6y^{1/2})\chi(A_\ell)\ge(1-6y^{1/2})^{\ell+1}\chi(G)$ and 
			$\chi(Y)\ge 2^6y^4\chi(A_\ell)\ge y^4\chi(G)$; or
			
			\item a pure blockade of length $k$ and mass at least $k^{-11}\chi(A_\ell)$, for some integer $k\in[\frac12y^{-1/2},x^{-2}]$.
		\end{itemize}
		
		The first bullet cannot hold by the maximality of $\ell$.
		Thus the second bullet holds; and so the third outcome of the lemma holds since $k\ge\frac12y^{-1/2}\ge y^{-1/4}\ge4$ and $k^{-11}\chi(A_\ell)\ge\frac14k^{-11}\chi(G)\ge k^{-12}\chi(G)$.
		This proves \cref{lem:1incre2}.
	\end{proof}
	
	The following simple lemma allows us to turn $\chi$-dense graphs into $\chi$-dense blockades.
	\begin{lemma}
		\label{lem:partition}
		For every integer $\ell\ge1$ and every graph $G$ with $\chi(G)\ge2\ell$, there is a partition $(A_1,\ldots,A_\ell)$ of $V(G)$ such that $\chi(A_i)\ge \chi(G)/(2\ell)$ for all $i\in[\ell]$.
	\end{lemma}
	\begin{proof}
		Let $k\in\{0,1,\ldots,\ell-1\}$ be maximal such that there are disjoint $A_1,\ldots,A_k\subset V(G)$ with $\chi(G)/\ell-1\le \chi(A_i)\le\chi(G)/\ell$ for all $i\in[k]$ (these clearly exist for $k=0$). Let $A_\ell:=V(G)\setminus(A_1\cup\cdots\cup A_k)$;
		then $\chi(A_\ell)\ge \chi(G)-k\cdot\chi(G)/\ell\ge\chi(G)/\ell$.
		Let $B\subset A_\ell$ be minimal with $\chi(B)\ge\chi(G)/\ell$, and let $v\in B$;
		then $\chi(G)/\ell-1\le\chi(B\setminus\{v\})\le\chi(G)/\ell$.
		Hence, if $k<\ell-1$ then making $A_{k+1}:=B\setminus\{v\}$ would violate the maximality of $k$.
		Thus $k=\ell-1$; and so $(A_1,\ldots,A_\ell)$ satisfy the lemma since $\chi(G)/\ell-1\ge\chi(G)/(2\ell)$.
		This proves \cref{lem:partition}.
		\end{proof}
	Provided \cref{lem:1incre2,lem:partition}, we are ready to perform $\chi$-density increment to prove \cref{lem:incre1}, which we restate here for the reader's convenience.
	
	\begin{lemma}
		\label{lem:1incre}
		There exists $a\ge2$ such that for every $x\in(0,2^{-a}]$ and for every $P_5$-free graph $G$ with $\chi(G)\ge x^{-a}$, there exist an integer $k\in[2,x^{-1}]$ and a pure or $(x,\chi)$-dense blockade in $G$ of length $k$ and mass at least $k^{-a}\chi(G)$.
	\end{lemma}
	\begin{proof}
		We claim that $a:=200$ suffices.
		To this end, let $\eps:=2^{-32}$, and let $\delta:=2^{-7}\eps^2=2^{-71}$; then $\eps,\delta$ satisfy \cref{lem:chirdl}.
		Hence, $G$ contains either:
		\begin{itemize}
			\item a pure pair $(A,B)$ with $\chi(A),\chi(B)\ge\delta\cdot\chi(G)\ge 2^{-a}\chi(G)$; or
			
			\item an $(\eps,\chi)$-dense induced subgraph with chromatic number at least $\delta\cdot\chi(G)\ge\eps^3\chi(G)$.
		\end{itemize}
		
		If the first bullet holds then the lemma holds with $k=2$; note that if $(A,B)$ is complete then it is clearly $(x,\chi)$-dense.
		Thus, we may assume that the second bullet holds.
		Hence there exists $y\in[x^{3},\eps]$ minimal such that $G$ has a $(y,\chi)$-dense induced subgraph $F$ with $\chi(F)\ge y^3\chi(G)$. We run chromatic density increment by the following claim.
		\begin{claim}
			\label{claim:1incre}
			We may assume that $y\le x^{2}$.
		\end{claim}
		\begin{subproof}
			Suppose that $y\ge x^{2}$; then the choice of $a$ yields
			$$\chi(F)\ge y^3\chi(G)\ge y^3x^{-a}\ge x^{6-a}\ge x^{-6}.$$
			Hence, \cref{lem:1incre2} with $x^2$ in place of $x$ implies that $F$ contains either:
			\begin{itemize}
				\item a $(y^{3/2},\chi)$-dense induced subgraph $J$ with $\chi(J)\ge\frac14\chi(F)$;
				
				\item an $(x,\chi)$-dense blockade of length $\ceil{y^{-1/4}}$ and mass at least $y^4\chi(F)$; or
				
				\item a pure blockade of length $q$ and mass at least $q^{-12}\chi(F)$, for some integer $q\in[y^{-1/4},x^{-4}]$.
			\end{itemize}
			
			If the first bullet holds then $\chi(J)\ge\frac14\chi(F)\ge\frac14y^3\chi(G)\ge y^{9/2}\chi(G)$, which violates the minimality of $y$ since $x^3\le y^{3/2}<y$.
			
			If the second bullet holds then the lemma holds with $k=\ceil{y^{-1/4}}\ge2$, because $k\le y^{-1/4}+1\le x^{-1/2}+1\le x^{-1}$ and $y^4\chi(F)\ge y^7\chi(G)\ge k^{-28}\chi(G)\ge k^{-a}\chi(G)$.
			
			If the third bullet holds then the lemma holds with $k=\floor{q^{1/4}}\le x^{-1}$. Indeed, since $k\ge \floor{y^{-1/16}} \ge\floor{\eps^{-1/16}}\ge2$, we have $k\ge q^{1/4}-1\ge q^{1/4}/2\ge q^{1/8}\ge2$, which implies
			\begin{align*}
				q^{-12}\chi(F)\ge q^{-12}y^3\chi(G)\ge q^{-24}\chi(G)\ge k^{-192}\chi(G)\ge k^{-a}\chi(G).
			\end{align*}
			This proves \cref{claim:1incre}.
		\end{subproof}
		
		Now, let $\ell:=\ceil{x^{-1/2}}\le \frac12x^{-1}$.
		The choice of $a$ yields
		$$\chi(F)\ge y^3\chi(G)\ge y^3x^{-a}\ge x^{9-a}\ge 4x^{-1/2}\ge 2\ell$$
		and so \cref{lem:partition} gives a partition $(A_1,\ldots,A_\ell)$ of $V(F)$ such that
		$\chi(A_i)\ge\chi(F)/(2\ell)$ for all $i\in[\ell]$. 
		Since $F$ is $(y,\chi)$-dense and $(2\ell)^{-1}>x$, \cref{claim:1incre} implies that, 
		for all $1\le i<j\le\ell$ and all $v\in A_j$, we have
		$\chi(A_i\setminus N_G(v))\le y\cdot \chi(F)\le x^2\chi(F)<x\cdot\chi(A_i)$.
		Hence, the lemma holds because 
		$$\chi(A_i)\ge \chi(F)/(2\ell)\ge x\cdot\chi(F)\ge x\cdot y^3\chi(G)\ge x^{10}\chi(G)\ge \ell^{-20}\chi(G)\ge \ell^{-a}\chi(G)$$
		for all $i\in[\ell]$. This proves \cref{lem:1incre}.
	\end{proof}
	
	\section{Anticomplete or $\chi$-dense blockades via \erh{}}
	\label{sec:conv}
	This section provides a proof of \cref{lem:waymid}. The following general theorem allows us to convert the blockades given by \cref{lem:incre1} into more coherent structures; its proof is essentially the proof of~\cite[Theorem 6.1]{density7} but for chromatic number.
	\begin{theorem}
		\label{thm:conv}
		Let $\eps\in(0,\frac12]$ and $a\ge1$, and let $G$ be a graph with $\chi(G)\ge\eps^{-3a^2}$. Let $x:=\eps^{3a}$. Assume that for every induced subgraph $F$ of $G$ with $\chi(F)\ge \eps^a\chi(G)$, there exist an integer $k\in[2,x^{-1}]$ and a pure or $(x,\chi)$-dense blockade in $F$ of length $k$ and mass at least $k^{-a}\chi(F)$.
		Then $G$ contains a blockade $(B_1,\ldots,B_\ell)$ of length at least $\eps^{-1}$ and mass at least $x^{2a}\chi(G)$, such that for all $i,j\in[\ell]$ with $i<j$, $B_j$ is either anticomplete or $(\eps^a,\chi)$-dense to $B_i$.
	\end{theorem}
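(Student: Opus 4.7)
The plan is to build the desired blockade $(A_1,\ldots,A_\ell)$ by a simple iterative procedure that refines only the \emph{last} block at each step. Start from $(D_1):=(V(G))$, so $m:=1$. As long as $m<\lceil\eps^{-1}\rceil$, apply the hypothesis to $G[D_m]$ to obtain a blockade $(C_1,\ldots,C_k)$ with $k\in[2,x^{-1}]$ which is pure or $(x,\chi)$-dense, and replace $D_m$ with $(C_1,\ldots,C_k)$, producing the new blockade $(D_1,\ldots,D_{m-1},C_1,\ldots,C_k)$ of length $m-1+k\ge m+1$. After at most $\eps^{-1}$ iterations, the length reaches $\lceil\eps^{-1}\rceil$ and we output the result.

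Throughout the iteration I will maintain two invariants: (a) each block $D_i$ satisfies $\chi(D_i)\ge x^{2a}\chi(G)$; and (b) for all $i<j$, $D_j$ is anticomplete or $(\eps^a,\chi)$-dense to $D_i$. Invariant (a) holds because each block $D_i$ other than the initial $V(G)$ was produced by some application of the hypothesis, giving $\chi(D_i)\ge k^{-a}\chi(G)\ge x^{a}\chi(G)\ge x^{2a}\chi(G)$ (using $x<1$ and $a\ge 1$). For invariant (b), pairs lying entirely within $(D_1,\ldots,D_{m-1})$ are unchanged; pairs $(C_i,C_j)$ with $i<j$ inherit directly from the sub-blockade type, where \emph{complete} trivially gives $(\eps^a,\chi)$-dense, \emph{anticomplete} remains anticomplete, and $(x,\chi)$-dense gives $(\eps^a,\chi)$-dense since $x=\eps^{3a}\le\eps^a$; and each pair $(D_i,C_j)$ with $i<m$ inherits from the old pair $(D_i,D_m)$.

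The critical observation, and the main obstacle to address, is why refining only the last block preserves invariant (b). Both ``$D_m$ is anticomplete to $D_i$'' and ``$D_m$ is $(\eps^a,\chi)$-dense to $D_i$'' are universally quantified over vertices of $D_m$ (the ``from'' side), hence they are preserved when $D_m$ is shrunk to any subset $C_j\subseteq D_m$; this is exactly the inheritance we need. By contrast, ``$(\eps^a,\chi)$-dense'' is \emph{not} generally preserved under shrinking the ``to'' side, since a small subset may absorb the error budget $\eps^a\chi(\cdot)$ differently. For this reason, if one were to refine any block other than the last, the invariant could fail for pairs with blocks placed later in the blockade. Restricting to ``refine the last block'' is what makes the iteration go through.

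To finish, one must verify that the hypothesis is applicable at each step, i.e.\ $\chi(D_m)\ge\eps^a\chi(G)$ whenever the iteration continues. This is immediate at the first step since $\chi(V(G))=\chi(G)$. At a later step, the current $D_m$ is the last block $C_k$ produced in the previous step, so $\chi(D_m)\ge k_{\mathrm{prev}}^{-a}\chi(G)$; and if $k_{\mathrm{prev}}>\eps^{-1}$, then the length already exceeded $\eps^{-1}$ after that previous step and we would have terminated, so we may assume $k_{\mathrm{prev}}\le\eps^{-1}$, yielding $\chi(D_m)\ge\eps^a\chi(G)$. Combining termination, invariants (a) and (b), and applicability of the hypothesis at each step completes the proof.
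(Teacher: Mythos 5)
Your iteration is internally consistent, and as the statement is \emph{literally} written it does go through: since the hypothesis (as printed) guarantees $\chi(B_i)\ge k^{-a}\chi(G)$ with the bound measured against the \emph{whole} graph, every block you ever create has $\chi\ge x^a\chi(G)\ge x^{2a}\chi(G)$, and your check that $k_{\mathrm{prev}}\le\eps^{-1}$ whenever the loop continues keeps the hypothesis applicable; your observation that refining only the last block shrinks only the ``from'' side of each density condition is also correct. However, the strength you are leaning on is a typo in the statement. The intended (and needed) hypothesis bounds the blocks against the subgraph, i.e.\ $\chi(B_i)\ge k^{-a}\chi(F)$: this is exactly how the paper's proof of \cref{thm:conv} invokes it (``$\chi(B_i)\ge k^{-a}\chi(A)$'' when applying the hypothesis to $G[A]$), and it is the only version that the sole application of the theorem can supply --- \cref{claim:midway} in the proof of \cref{lem:midway} produces blockades with $\chi(B_i)\ge k^{-a}\chi(F)$, not $k^{-a}\chi(G)$. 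So a proof that genuinely uses ``$k^{-a}\chi(G)$'' proves a statement whose hypothesis the paper can never verify.

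Under the intended reading your scheme has a real gap: each refinement of the last block only guarantees $\chi(C_i)\ge k^{-a}\chi(D_m)$, so the guarantee decays multiplicatively. Already at the second step the last block may only satisfy $\chi(D_m)\ge x^a\eps^a\chi(G)<\eps^a\chi(G)$, so the hypothesis need not apply; and after the $\sim\eps^{-1}$ refinements needed to reach length $\eps^{-1}$, the blocks can be as small as roughly $\eps^{3a^2\eps^{-1}}\chi(G)$, far below $x^{2a}\chi(G)$. The paper avoids this compounding loss with a different mechanism: it maintains a ``layout'' whose blocks satisfy the potential inequality $\sum_j\chi(A_j)^{1/a}\ge\chi(G)^{1/a}$, which is preserved under refinement because $k\cdot\bigl(k^{-a}\chi(A)\bigr)^{1/a}=\chi(A)^{1/a}$; as long as there are at most $\eps^{-1}$ blocks this forces some block to have $\chi\ge\eps^a\chi(G)$, and that (not necessarily last) block is the one refined. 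The ``shrinking the to-side'' problem that pushed you to refine only the last block is handled there by bookkeeping wrong pairs with an \emph{absolute} budget $x\cdot\chi(G)$ (which is monotone under shrinking either side) and converting it to the relative bound $\eps^a\chi(A_i)$ only at the end, using $\chi(A_i)\ge\eps^{2a}\chi(G)$. To repair your argument you would need some such device; the naive last-block iteration cannot be patched to work with the $\chi(F)$-relative hypothesis.
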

	\begin{proof}
		Let $J$ be a graph; in what follows we assume $V(J)=\{1,\ldots,\abs{V(J)}\}$.
		For each $j\in V(J)$ let $A_j$ be a nonempty subset of $V(G)$, such that for all distinct $i,j\in V(J)$, $A_i$ and $A_j$ are disjoint, and they are anticomplete in $G$ whenever $i,j$ are nonadjacent in $J$. We call $\mathcal{L}=(J,(A_j:j\in V(J)))$
		a {\em layout}. A pair $\{u,v\}$ of distinct vertices of $G$ is {\em undecided} for the layout $\mac L$
		if there exists $j\in V(J)$ with $u,v\in A_j$; and {\em decided} otherwise.
		A decided pair $\{u,v\}$ is {\em wrong} for $\mac L$ if there are distinct
		$i,j\in V(J)$ such that $u\in A_i$, $v\in A_j$, and $u,v$ are nonadjacent in $G$ while $i,j$ are adjacent in $J$.
		
		Choose a layout $\mathcal{L}=(J,(A_j:j\in V(J)))$ satisfying the following:
		\begin{itemize}
			\item $\chi(A_j)\ge \eps^{2a}\chi(G)$ for each $j\in V(J)$;
			\item $\sum_{j\in V(J)}\chi(A_j)^{1/a}\ge \chi(G)^{1/a}$;
			\item for all $i,j\in V(J)$ adjacent in $J$ with $i<j$ and for all $v\in A_j$, the chromatic number of the set of vertices $u\in A_i$ for which $\{u,v\}$ is a wrong pair is less than $x\cdot \chi(G)$; and
			\item subject to these three conditions, $|J|$ is maximum.
		\end{itemize}
		(This is possible since we may take $V(J)=\{1\}$ and $A_1=V(G)$ to satisfy the first three conditions.)
		
		\begin{claim}
			\label{claim:short}
			We may assume that $|J|\le \eps^{-1}$.
		\end{claim}
		
		\begin{subproof}
			Assume that $|J|\ge \eps^{-1}$. 
			For every $i,j\in V(J)$ adjacent in $J$ with $i<j$ and for all $v\in A_j$,
			the chromatic number of the set of vertices $u\in A_i$ for which $\{u,v\}$ is a wrong pair is less than $x\cdot\chi(G)\le x\cdot \eps^{-2a}\chi(A_i)= \eps^a\cdot\chi(A_i)$.
			Hence $(A_j:j\in V(J))$ is a blockade satisfying the theorem.
			This proves \cref{claim:short}. 
		\end{subproof}
		
		Now, let $A\in\{A_j:j\in V(J)\}$ satisfy $\chi(A)=\max(\chi(A_j):j\in V(J))$. Since $\sum_{j\in V(J)}\chi(A_j)^{1/a}\ge \chi(G)^{1/a}$,
		and $|J|\le \eps^{-1}$ by \cref{claim:short}, it follows that
		$\chi(A)^{1/a}\ge \eps\cdot\chi(G)^{1/a}$, that is, 
		$\chi(A)\ge \eps^a\chi(G)$.
		By applying the hypothesis to $G[A]$,
		we obtain an integer $k\in[2,x^{-1}]$ and a pure or $(x,\chi)$-dense blockade $(B_1,\ldots, B_{k})$ in $G[A]$ of mass at least $ k^{-a}\chi(A)$.
		Let $K$ be the graph with vertex set $[k]$, such that for all distinct $p,q\in[k]$, $p$ is nonadjacent to $q$ in $K$ if and only if $B_p$ is anticomplete to $B_q$ in $G[A]$; in particular $K$ is complete if $(B_1,\ldots,B_{k})$ is $(x,\chi)$-dense in $G[A]$.
		\begin{claim}
			\label{claim:long}
			$k\ge \eps^{-1}$.
		\end{claim}
		\begin{subproof}
			Suppose that $k\le \eps^{-1}$. Then $\chi(B_p)\ge k^{-a}\chi(A)\ge \eps^{a}\chi(A)$ for all $p\in[k]$.
			By substituting $K$ for
			the vertex of $J$ corresponding to $A$, and replacing $A$ by $B_1,\ldots, B_k$ in the natural order, we obtain a new layout $\mathcal{L}'=(J', (A_j':j\in V(J')))$ say,
			where $|J'|>|J|$.
			We claim that this violates the choice of $\mathcal{L}$; and so we must verify that $\mathcal{L}'$ satisfies the
			first three bullets in the definition of $\mathcal{L}$. 
			To see this, observe that each $B_p$ satisfies
			$\chi(B_p)\ge \eps^{a}\chi(A)\ge \eps^{2a} \chi(G)$,
			and so the first bullet is satisfied.
			For the second bullet, since $\chi(B_p)\ge k^{-a}\chi(A)$ for all $p\in[k]$, it follows that
			\[\chi(B_1)^{1/a}+\cdots+\chi(B_k)^{1/a}\ge\chi(A)^{1/a}\]
			and so $\sum_{j\in V(J')}\chi(A_j')^{1/a}\ge \chi(G)^{1/a}$.
			For the third bullet, let $i\in V(J)$ be such that $A=A_i$.
			For each neighbour $j$ of $i$ in $J$ with $i<j$, for each $p\in[k]$, and for each $v\in A_j$, the set of vertices $u\in B_p$ for which $\{u,v\}$ is a wrong pair is a subset of the set of vertices $u'\in A_i$ for which $\{u',v\}$ is a wrong pair, and hence has chromatic number less than $x\cdot \chi(G)$.
			Also, if $(B_1,\ldots,B_k)$ is $(x,\chi)$-dense, then for each $p,p'\in[k]$ adjacent in $K$ with $p<p'$ and for each $v\in B_{p'}$, the chromatic number of the set of vertices $u\in B_p$ for which $\{u,v\}$ is a wrong pair is less than $x\cdot \chi(B_p)\le x\cdot \chi(G)$.
			This contradicts the choice of $\mathcal{L}$, and so proves \cref{claim:long}.
		\end{subproof}
		
		Now, since $k\le x^{-1}$ and $\chi(A)\ge\eps^a\chi(G)\ge x^a\chi(G)$, we have $\chi(B_p)\ge k^{-a}\chi(A)\ge x^{a}\chi(A)\ge x^{2a}\chi(G)$ for all $p\in[k]$;
		and for all distinct $p,p'\in[k]$ with $p<p'$,
		$B_{p'}$ is either anticomplete or $(\eps^a,\chi)$-dense to $B_p$ since $x=\eps^{3a}\le \eps^a$.
		Hence $(B_1,\ldots,B_k)$ satisfies the theorem by \cref{claim:long}.
		This proves \cref{thm:conv}.
	\end{proof}
	We are now ready to prove \cref{lem:waymid} by using \cref{lem:incre1}, \cref{thm:conv}, and the \erh{} property of $P_5$~\cite{density7}.
	We restate \cref{lem:waymid} here for the reader's convenience:
	
	\begin{lemma}
		\label{lem:midway}
		There exists $b\ge2$ such that for every $\eps\in(0,\frac12]$, every $P_5$-free graph $G$ with $\chi(G)\ge\eps^{-b}$ contains an anticomplete or $(\eps,\chi)$-dense blockade of length at least $\eps^{-1}$ and mass at least $\eps^b\chi(G)$.
	\end{lemma}
	\begin{proof}
		Let $a\ge2$ satisfy \cref{lem:incre1}; by increasing $a$ if necessary we may assume that every $k$-vertex $P_5$-free graph has a clique or stable set with at least $k^{1/a}$ vertices, by the \erh{} property of $P_5$.
		We claim that $b:=6a^3$ suffices.
		To this end, let $x:=\eps^{3a^2}$; we first prove that:
		\begin{claim}
			\label{claim:midway}
			For every induced subgraph $F$ of $G$ with $\chi(F)\ge \eps^{a^2}\chi(G)$, there exist an integer $k\in[2,x^{-1}]$ and a pure or $(x,\chi)$-dense blockade in $F$ of length $k$ and mass at least $k^{-a}\chi(F)$.
		\end{claim}
		\begin{subproof}
			Since $\chi(F)\ge \eps^{a^2}\chi(G)\ge \eps^{a^2-b}\ge \eps^{-3a^3}= x^{-a}$,
			Then the choice of $a$ and \cref{lem:1incre} together give an integer $k\in[2,x^{-1}]$ and a pure or $(x,\chi)$-dense blockade in $F$ of length $k$ and mass at least $k^{-a}\chi(F)$.
			This proves \cref{claim:midway}.
		\end{subproof}
		Now, \cref{claim:midway} and \cref{thm:conv} (with $\eps^a$ in place of $\eps$, note that $\chi(G)\ge\eps^{-b}\ge \eps^{-3a^3}$) together give a blockade $(B_1,\ldots,B_\ell)$ in $G$ of length at least $\eps^{-a}$ and mass at least $x^{2a}\chi(G)=\eps^{6a^3}\chi(G)=\eps^b\chi(G)$, such that for all $i,j\in[\ell]$ with $i<j$, $B_j$ is either anticomplete or $(\eps^a,\chi)$-dense to $B_i$.
		Let $J$ be the graph on vertex set $[\ell]$ such that for every $i,j\in[\ell]$ with $i<j$, $i$ is adjacent to $j$ in $J$ whenever $B_j$ is $(\eps^a,\chi)$-dense to $B_i$.
		For completeness we give a proof that $J$ is $P_5$-free, as follows.
		\begin{claim}
			\label{claim:pattern}
			$J$ is $P_5$-free.
		\end{claim}
		\begin{subproof}
			Suppose not; then there are $j_1,\ldots,j_5\in J$ with $j_1>\ldots>j_5$ such that $J[\{j_1,\ldots,j_5\}]$ is isomorphic to $P_5$.
			Let $p\in\{1,\ldots,5\}$ be maximal such that there are $v_1\in B_{j_1},\ldots,v_p\in B_{j_p}$ for which $G[\{v_1,\ldots,v_p\}]$ is isomorphic to $J[\{j_1,\ldots,j_p\}]$; for $p=1$ one can choose an arbitrary $v_1\in B_{j_1}$.
			Since $G$ is $P_5$-free, $p\le 4$.
			For each $i\in[p]$, let $E_i:=\emptyset$ if $i$ is nonadjacent to $p+1$ in $J$ (so $B_{j_i}$ is anticomplete to $B_{j_{p+1}}$), and let $E_i$ be $B_{j_{p+1}}\setminus N_G(v_i)$ if $i$ is adjacent to $p+1$ in $J$ (so $B_{j_i}$ is $(\eps^a,\chi)$-dense to $B_{j_{p+1}}$).
			Then $\chi(E_i)<\eps^a\chi(B_{j_{p+1}})\le\frac14\chi(B_{j_{p+1}})$ for all $i\in[p]$;
			and since $p\le 4$, there exists $v_{p+1}\in B_{j_{p+1}}\setminus(E_1\cup\cdots\cup E_p)$.
			But then $v_1,\ldots,v_p,v_{p+1}$ violate the maximality of $p$.
			This proves \cref{claim:pattern}.
		\end{subproof}
		
		\cref{claim:pattern} and the choice of $a$ together give a clique or stable set $I$ of $J$ with $\abs I\ge \abs J^{1/a}=\ell^{1/a}\ge\eps^{-1}$.
		We may assume $I=[q]$ for some $q\ge\eps^{-1}$; then $(B_1,\ldots,B_q)$ is an anticomplete blockade in $G$ if $I$ is a stable set in $J$, and is an $(\eps,\chi)$-dense blockade in $G$ if $I$ is a clique in $J$.
		This proves \cref{lem:midway}.
	\end{proof}
	
	\section{Chromatic density increment, second round}
	\label{sec:2incre}
	In this section we complete the proof of \cref{lem:incre}. As sketched in \cref{subsec:incre}, our plan here is to use \cref{lem:waymid} to implement the second round of chromatic density increment.
	The main issue now is that it is not clear to us how to convert the $(\eps,\chi)$-dense blockades given by \cref{lem:waymid} into $(\operatorname{poly}(\eps),\chi)$-dense induced subgraphs, even in $P_5$-free graphs.
	But all is not lost; we will run chromatic density increment on the blockades themselves to obtain polynomially balanced complete blockades in several fashions.
	We aim to show that for some suitable $b>1$, given a $(y,\chi)$-dense blockade $(A_1,\ldots,A_\ell)$ in a graph $G$ for some chromatic density parameter $y$ and for some integer $\ell\ge y^{-1}$, either:
	\begin{itemize}
		\item some $G[A_j]$ contains a $(y^b,\chi)$-dense blockade of length at least $y^{-b}$ and mass at least $y^{b^2}\chi(A_j)$ (i.e. $\chi$-density increment);
		
		\item there exist $I\subset[\ell]$ with $\abs I\ge y^{-1/2}$ and $B_i\subset A_i$ with $\chi(B_i)\ge y^{b^2}\chi(A_i)$ for all $i\in I$ so that $(B_i:i\in I)$ is a complete blockade (i.e. a `partial transversal' complete blockade); or
		
		\item some $G[A_j]$ contains a complete blockade of length at least $y^{-1/2}$ and mass at least $y^{b^2}\chi(A_j)$ (i.e. a polynomially balanced complete blockade inside some block).
	\end{itemize}
	
	(The numerical constants in the actual proof will be slightly different.)
	Our $\chi$-density increment terminates when the chromatic density parameter drops below some negative power of $\chi(G)$, which would immediately give a clique in $G$ of size at least some positive power of $\chi(G)$.
	
	To run chromatic density increment on $\chi$-dense blockades, we will chase induced five-vertex paths via anticonnected induced subgraphs; here, a graph is {\em anticonnected} if its complement is connected.
	The following lemma shows how to extract induced $P_5$'s from pairwise anticomplete and anticonnected vertex subsets.
	It also underlines the importance of the \erh{} property of $P_5$ in producing a full anticomplete blockade as one outcome in \cref{lem:waymid}.
	
	\begin{lemma}
		\label{lem:2dense1}
		Let $k\ge1$ be an integer, and let $r>0$. Let $G$ be a $P_5$-free graph with nonempty disjoint $A,B_1,\ldots,B_k\subset V(G)$, such that:
		\begin{itemize}
			\item $B_1,\ldots,B_k$ are pairwise anticomplete in $G$ and $G[B_i]$ is anticonnected for each $i\in[k]$;
			
			\item no vertex in $A$ is mixed on two of $B_1,\ldots,B_k$; and
			
			\item $\chi(A\setminus N_G(v))\le r$ for all $v\in B_1\cup\cdots\cup B_k$.
		\end{itemize}
		
		Then either:
		\begin{itemize}
			\item there are at least $k-k^{1/2}$ indices $i\in[k]$ each satisfying there exists $A_i'\subset A$ complete to $B_i$ with $\chi(A_i')\ge (1-k^{-1})\chi(A)-kr$; or
			
			\item $G[A]$ contains a complete blockade of length at least $k^{1/2}$ and mass at least $k^{-1}\chi(A)$.
		\end{itemize}
	\end{lemma}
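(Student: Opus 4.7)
I would begin by partitioning $A$, for each $i\in[k]$, into the sets $A_i^c$, $A_i^a$, $A_i^m$ of vertices complete, anticomplete, and mixed on $B_i$, respectively. Picking any $b\in B_i$ and noting $A_i^a\subseteq A\setminus N_G(b)$ yields $\chi(A_i^a)\le r$, while the hypothesis that no vertex of $A$ is mixed on two of $B_1,\ldots,B_k$ makes the sets $A_1^m,\ldots,A_k^m$ pairwise disjoint. Asking for $A_i'\subset A$ complete to $B_i$ of large chromatic number is the same as asking for $\chi(A_i^c)$ to be large, so the first outcome of the lemma amounts to $\chi(A_i^c)\ge(1-k^{-1})\chi(A)-kr$ for at least $k-k^{1/2}$ indices $i$. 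If this fails, the set $I\subseteq[k]$ of remaining ``bad'' indices satisfies $|I|>k^{1/2}$, and subadditivity combined with $\chi(A_i^a)\le r$ forces $\chi(A_i^m)>k^{-1}\chi(A)+(k-1)r$ for every $i\in I$.

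For each $i\in I$ I would then define the refined block
\[
D_i:=A_i^m\cap\bigcap_{j\in I\setminus\{i\}}A_j^c,
\]
consisting of vertices mixed on $B_i$ but complete to $B_j$ for every $j\in I\setminus\{i\}$. Since the sets $A_j^m$ with $j\ne i$ are disjoint from $A_i^m$, we have $A_i^m\setminus D_i\subseteq\bigcup_{j\in I\setminus\{i\}}A_j^a$, whose chromatic number is at most $(|I|-1)r\le(k-1)r$ by subadditivity. Combined with the lower bound on $\chi(A_i^m)$, this yields $\chi(D_i)>k^{-1}\chi(A)$, matching the target chromatic bound. Since $|I|>k^{1/2}$, only the completeness of the candidate blockade $(D_i:i\in I)$ in $G[A]$ remains to verify.

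This last step is the main obstacle and is the only place where the $P_5$-freeness enters. For distinct $i,j\in I$ and $u\in D_i$, $v\in D_j$, I would suppose $uv\notin E(G)$ and derive a contradiction. Since $u\in A_i^m$ and $v\in A_j^m$, I can pick $b'\in B_i\setminus N_G(u)$, $c\in B_j\cap N_G(v)$, and $c'\in B_j\setminus N_G(v)$. Then $b'\text-v\text-c\text-u\text-c'$ is an induced $P_5$ in $G$: the four edges hold because $v\in A_i^c$ gives $vb'$, $u\in A_j^c$ gives both $uc$ and $uc'$, and $vc$ is by choice; the non-edges $b'u$ and $vc'$ are by choice, $uv$ is by assumption, and $b'c,b'c'$ come from the anticompleteness of $B_i$ to $B_j$. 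The key insight is the careful definition of the $D_i$'s: by forcing each $u\in D_i$ to be complete to every $B_j$ with $j\in I\setminus\{i\}$, any hypothetical nonedge inside the blockade spawns precisely this induced $P_5$, while the chromatic bookkeeping reduces to routine subadditivity via $\chi(A_i^a)\le r$.
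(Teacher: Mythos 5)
Your set-up (partitioning $A$ by its behaviour on each $B_i$, bounding the anticomplete parts by $r$, using disjointness of the mixed parts, and extracting a complete blockade from high-chromatic refined mixed sets) is essentially the paper's argument, and your chromatic bookkeeping is fine. However, the completeness verification — the only place $P_5$-freeness enters — has a genuine gap. For the five vertices $b',v,c,u,c'$ to induce a $P_5$ you must check six non-edges, and you verify only five: you never address the pair $c,c'$. Both lie in $B_j$, and with your choice ($c$ an arbitrary neighbour of $v$ in $B_j$, $c'$ an arbitrary non-neighbour) they may well be adjacent, in which case $b'\text-v\text-c\text-u\text-c'$ has the chord $cc'$ and the induced subgraph on these five vertices is a triangle $\{c,u,c'\}$ with a pendant path, not a $P_5$, so no contradiction follows. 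Tellingly, your argument never uses the hypothesis that each $G[B_i]$ is anticonnected, which is exactly the assumption needed at this point: without it one cannot in general find such a split pair that is nonadjacent (think of $G[B_j]$ complete multipartite, where every neighbour of $v$ in $B_j$ could be adjacent to every non-neighbour).

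The fix is short and is what the paper does: since $v$ is mixed on $B_j$ and the complement of $G[B_j]$ is connected, there is a complement-edge of $G[B_j]$ crossing between $N_G(v)\cap B_j$ and $B_j\setminus N_G(v)$; that is, you can choose $c\in B_j\cap N_G(v)$ and $c'\in B_j\setminus N_G(v)$ with $cc'\notin E(G)$. With this choice all six non-edges hold and your path is indeed an induced $P_5$. With that repair your proof matches the paper's.
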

	\begin{proof}
		
		\begin{figure}[ht]
			\centering
			
			\begin{tikzpicture}[scale=0.7,auto=left]

				\draw [rounded corners] (-5,-2) rectangle (5,2);
				\node [] at (-5.75,0) {$A$};
				\draw [-] (-4,-2) -- (-4,2);
				\node [] at (-4.5,0) {$D$};
				\draw [-] (-4,-0.5) -- (5,-0.5);
				\draw [-] (-2,-0.5) -- (-2,-2);
				\draw [-] (0,-0.5) -- (0,-2);
				\draw [-] (3,-0.5) -- (3,-2);
				\node [] at (-3,-1.25) {$D_1$};
				\node [] at (-1,-1.25) {$D_2$};
				\node [] at (1.5,-1.25) {$\cdots$};
				\node [] at (4,-1.25) {$D_k$};
				
				\draw [] (-3,-3.5) circle (0.8cm);
				\draw [] (-1,-3.5) circle (0.8cm);
				\draw [] (4,-3.5) circle (0.8cm);
				\node [] at (-3,-3.5) {$B_1$};
				\node [] at (-1,-3.5) {$B_2$};
				\node [] at (1.5,-3.5) {$\cdots$};
				\node [] at (4,-3.5) {$B_k$};

				\draw [rounded corners] (12,0) rectangle (14,2);
				\draw [rounded corners] (8.5,0) rectangle (10.5,2);
				
				\draw [] (9.5,-3) circle (1.25cm);
				\draw [] (13,-3) circle (1.25cm);
				
				\node[] at (7.75,-3) {$B_i$};
				\node[] at (8,0.95) {$D_i$};
				\node[] at (14.75,-3.05) {$B_j$};
				\node[] at (14.5,0.95) {$D_j$};
				
				\node[inner sep=1.5pt, fill=black,circle,draw] (v) at ({13},{1}) {};
				\node[xshift=0.3cm] at (v) {$v$};
				\node[inner sep=1.5pt, fill=black,circle,draw] (z) at ({9.5},{-3}) {};
				\node[xshift=-0.3cm] at (z) {$z$};
				\node[inner sep=1.5pt, fill=black,circle,draw] (u) at ({9.5},{1}) {};
				\node[xshift=-0.3cm] at (u) {$u$};
				\node[inner sep=1.5pt, fill=black,circle,draw] (v') at ({12.5},{-3.5}) {};
				\node[xshift=0.2cm,yshift=-0.25cm] at (v') {$v'$};
				\node[inner sep=1.5pt, fill=black,circle,draw] (u') at ({13.5},{-2.5}) {};
				\node[xshift=0.25cm,yshift=-0.15cm] at (u') {$u'$};
				\draw[-] (z) -- (v) -- (u') -- (u) -- (v');
				\draw[dashed] (z) -- (u') -- (v') -- (v) -- (u) -- (z) -- (v');
				
			\end{tikzpicture}
			
			\caption{Proof of \cref{lem:2dense1}.}
			\label{fig:2dense1}
		\end{figure}

		Let $D$ be the set of vertices in $A$ with no neighbour in $B_i$ for some $i\in[k]$; then $\chi(D)\le kr$ by the hypothesis.
		For each $i\in[k]$, let $D_i$ be the set of vertices in $A\setminus D$ mixed on $B_i$;
		then the hypothesis implies that $D_i$ is complete to $B_j$ for all distinct $i,j\in[k]$.
		Hence $D_1,\ldots,D_k$ are pairwise disjoint; and also $A\setminus(D\cup D_i)$ is complete to $B_i$ for all $i\in[k]$.
		
		\begin{claim}
			\label{claim:complete}
			$(D_1,\ldots,D_k)$ is a complete blockade in $G[A]$.
		\end{claim}
		\begin{subproof}
			Suppose not; then there are distinct $i,j\in[k]$ with nonadjacent $u\in D_i$ and $v\in D_j$.
			Since $v$ is mixed on $B_j$ and since $G[B_j]$ is anticonnected by the hypothesis, there are $u',v'\in B_j$ nonadjacent in $G[B_j]$ such that $v$ is adjacent to $u'$ and nonadjacent to $v'$.
			Since $u$ is mixed on $B_i$, it has a nonneighbour $z\in B_i$;
			but then $z\text-v\text-u'\text-u\text-v'$ would be an induced $P_5$ in $G$, a contradiction. (See the right-hand side of \cref{fig:2dense1}.)
			This proves \cref{claim:complete}.
		\end{subproof}
		
		Now, let $I:=\{i\in[k]:\chi(D_i)\le k^{-1}\chi(A)\}$;
		then $\chi(A\setminus(D\cup D_i))\ge (1-k^{-1})\chi(A)-kr$ for all $i\in I$.
		Thus, if $\abs I\ge k-k^{1/2}$ then the first outcome of the lemma holds; and if $\abs I\le k-k^{1/2}$ then the second outcome of the lemma holds by \cref{claim:complete}.
		This proves \cref{lem:2dense1}.
	\end{proof}
	
	A simple averaging argument leverages the setting of one vertex subset $A$ as in \cref{lem:2dense1} to that of many disjoint vertex subsets $A_1,\ldots,A_\ell$, as follows.
	\begin{lemma}
		\label{lem:2dense2}
		Let $k,\ell\ge1$ be integers and let $r_1,\ldots,r_\ell>0$.
		Let $G$ be a $P_5$-free graph with nonempty disjoint $A_1,\ldots,A_\ell,B_1,\ldots,B_k\subset V(G)$, such that:
		\begin{itemize}
			\item $B_1,\ldots,B_k$ are pairwise anticomplete in $G$ and $G[B_i]$ is anticonnected for all $i\in[k]$;
			
			\item no vertex in $A_1\cup\cdots\cup A_\ell$ is mixed on two of $B_1,\ldots,B_k$; and
			
			\item $\chi(A_j\setminus N_G(v))\le r_j$ for all $j\in[\ell]$ and all $v\in B_1\cup\cdots\cup B_k$.
		\end{itemize}
		Then either:
		\begin{itemize}
			\item there exist $i\in[k]$ and $I\subset[\ell]$ with $\abs I\ge(1-k^{-1/2})\ell$ such that for all $j\in I$, there exists $A_j'\subset A_j$ complete to $B_i$ with $\chi(A_j')\ge (1-k^{-1})\chi(A_j)-kr_j$; or
			
			\item there exist $j\in[\ell]$ and a complete blockade in $G[A_j]$ of length at least $k^{1/2}$ and mass at least $k^{-1}\chi(A_j)$.
		\end{itemize}
	\end{lemma}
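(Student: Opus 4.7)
The plan is to apply Lemma \ref{lem:2dense1} individually to each $A_j$ and then combine the outcomes by a simple double-counting argument. For each $j \in [\ell]$, the hypotheses of Lemma \ref{lem:2dense1} are satisfied with $A = A_j$, $r = r_j$, and the same $B_1, \ldots, B_k$: the anticomplete/anticonnected structure of the $B_i$'s is given, no vertex of $A_j \subset A_1 \cup \cdots \cup A_\ell$ is mixed on two of the $B_i$'s by the second bullet of the hypothesis, and the bound $\chi(A_j \setminus N_G(v)) \le r_j$ is exactly the third bullet restricted to $A_j$.

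Applying Lemma \ref{lem:2dense1} thus gives, for each $j \in [\ell]$, one of its two outcomes. If for some $j \in [\ell]$ the second outcome holds, then $G[A_j]$ already contains a complete blockade $(D_1,\ldots,D_q)$ with $q \ge k^{1/2}$ and $\chi(D_i) \ge k^{-1}\chi(A_j)$ for all $i \in [q]$, which is exactly the second outcome of the present lemma and we are done. Otherwise, the first outcome of Lemma \ref{lem:2dense1} holds for every $j \in [\ell]$, providing a set $S_j \subset [k]$ with $|S_j| \ge k - k^{1/2}$ such that for each $i \in S_j$ there exists $A_{j,i}' \subset A_j$ complete to $B_i$ with $\chi(A_{j,i}') \ge (1-k^{-1})\chi(A_j) - kr_j$.

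It remains to find a single index $i \in [k]$ that appears in $S_j$ for many values of $j$. Double-counting the pairs $(i,j)$ with $i \in S_j$ gives
\[\sum_{i \in [k]} \bigl|\{j \in [\ell] : i \in S_j\}\bigr| \;=\; \sum_{j \in [\ell]} |S_j| \;\ge\; \ell(k - k^{1/2}),\]
so by averaging over $i \in [k]$ there exists $i^* \in [k]$ with $|\{j \in [\ell] : i^* \in S_j\}| \ge (1 - k^{-1/2})\ell$. Setting $I := \{j \in [\ell] : i^* \in S_j\}$ and $A_j' := A_{j,i^*}'$ for each $j \in I$ then verifies the first outcome with the choice $i = i^*$.

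There is no real obstacle here; the lemma is essentially a packaging of Lemma \ref{lem:2dense1} with a single averaging step, and the only thing to verify carefully is that the numerical bounds $(1-k^{-1/2})\ell$ in $|I|$ and $(1-k^{-1})\chi(A_j) - kr_j$ in $\chi(A_j')$ come out exactly as stated, which they do directly from the pigeonhole count above and from the conclusion of Lemma \ref{lem:2dense1}.
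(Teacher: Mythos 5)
Your proposal is correct and follows essentially the same route as the paper: apply \cref{lem:2dense1} to each $A_j$ with $r=r_j$ (handling the complete-blockade outcome immediately), and then average over $i\in[k]$ to find a single $i^*$ lying in at least $(1-k^{-1/2})\ell$ of the index sets. The double-counting step and the resulting bounds match the paper's argument exactly.
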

	\begin{proof}
		Assume that the second outcome fails.
		Then for each $j\in[\ell]$, \cref{lem:2dense1} with $A=A_j$ and $r=r_j$ yields $I_j\subset[k]$ with $\abs{I_j}\ge k-k^{1/2}=(1-k^{-1/2})k$ such that for every $i\in I_j$, there exists $A_{ij}\subset A_j$ complete to $B_i$ with $\chi(A_{ij})\ge (1-k^{-1})\chi(A_j)-kr_j$.
		By averaging, there exists $i\in[k]$ such that $i\in I_j$ for at least $(1-k^{-1/2})\ell$ indices $j\in[\ell]$.
		Thus the first outcome of the lemma holds, proving \cref{lem:2dense2}.
	\end{proof}
	
	The following corollary of \cref{lem:blocks} says that every graph contains either a high-$\chi$ anticonnected induced subgraph or a polynomially balanced complete blockade.
	
	\begin{lemma}
		\label{lem:anticonn}
		Let $y\in(0,\frac18]$, and let $G$ be a graph such that there is no $S\subset V(G)$ satisfying $\chi(S)\ge y\cdot\chi(G)$ and $G[S]$ is anticonnected. Then $G$ contains a complete blockade of length at least $y^{-1/2}$ and mass at least $y\cdot\chi(G)$.
	\end{lemma}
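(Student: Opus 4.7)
The plan is to use the hypothesis to obtain a join decomposition of $G$ into anticonnected pieces each with small chromatic number, and then to greedily merge those pieces into a complete blockade with the desired parameters.

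First, I would observe that $G$ itself cannot be anticonnected: otherwise $S=V(G)$ would satisfy $\chi(S)=\chi(G)\ge y\chi(G)$ and be anticonnected, violating the hypothesis (since $y<1$). Hence the complement $\bar G$ is disconnected; let $C_1,\ldots,C_r$ be its connected components, where $r\ge2$. Each $G[V(C_i)]$ is then anticonnected, and $V(C_i)$ is complete to $V(C_j)$ in $G$ for all distinct $i,j\in[r]$. Applying the hypothesis to each $V(C_i)$ yields $\chi(V(C_i))<y\chi(G)$, while iterating the equality case of the third measure axiom over this pairwise complete family gives $\chi(G)=\sum_{i=1}^{r}\chi(V(C_i))$.

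Next, I would greedily partition $[r]$ into consecutive groups: process the anticomponents in an arbitrary order and close off a group as soon as its running total of $\chi(V(C_i))$'s first reaches $y\chi(G)$. Since each $\chi(V(C_i))<y\chi(G)$, every completed group has total $\chi$ strictly less than $2y\chi(G)$, and any leftover unfinished group (with $\chi<y\chi(G)$) can simply be discarded. Defining $B_s$ as the union of the $V(C_i)$'s in the $s$-th completed group, the pairwise completeness of the anticomponents survives taking unions, so $(B_1,\ldots,B_k)$ is a complete blockade, and the equality-case axiom gives $\chi(B_s)\ge y\chi(G)$ for every $s$.

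It remains to verify that $k\ge y^{-1/2}$. The $k$ completed groups jointly account for at least $(1-y)\chi(G)$ of the total chromatic number while each contributes less than $2y\chi(G)$; hence $k\ge(1-y)/(2y)$. The inequality $(1-y)/(2y)\ge y^{-1/2}$ is equivalent to $(1-y)/\sqrt{y}\ge 2$, and for $y\le\frac18$ the left-hand side is at least $(7/8)\sqrt{8}>2$, so the bound holds throughout the relevant range. I do not foresee a serious obstacle: the only subtle ingredient is the equality case of the third measure axiom, which for $\chi$ is immediate since any proper colouring of two vertex sets that are complete to each other must use disjoint colour classes on the two sides.
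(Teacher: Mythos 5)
Your proof is correct and follows essentially the same route as the paper: both take the anticomponents of $G$ (which form a complete blockade whose blocks each have $\chi<y\cdot\chi(G)$ by hypothesis) and then aggregate them into groups of chromatic number between $y\cdot\chi(G)$ and $2y\cdot\chi(G)$, finishing with the same count $(1-y)/(2y)\ge y^{-1/2}$ for $y\le\frac18$. The only difference is cosmetic — you build the groups greedily and discard a small leftover, while the paper takes a minimal partition into groups of chromatic number at most $2y\cdot\chi(G)$ — and your appeal to additivity of $\chi$ over complete parts is valid (and is exactly the equality case of the third measure axiom noted in the paper).
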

	\begin{proof}
		Let $(A_1,\ldots,A_k)$ be a partition of $V(G)$ such that each $A_i$ induces a connected component in the complement of $G$.
		Then $(A_1,\ldots,A_k)$ is a complete blockade in $G$ and $\chi(A_i)< y\cdot\chi(G)$ for all $i\in[k]$ by the hypothesis.
		The conclusion then follows from \cref{lem:blocks}.
	\end{proof}
	We next prepare for chromatic density increment on $\chi$-dense blockades. Combining \cref{lem:waymid,lem:mixed,lem:2dense2,lem:anticonn} gives:
	\begin{lemma}
		\label{lem:2incre1}
		Let $b\ge2$ be given by \cref{lem:waymid}.
		Let $y\in(0,2^{-8}]$, and let $G$ be a $P_5$-free graph with a $(y,\chi)$-dense blockade $(A_1,\ldots,A_\ell)$ where $\ell\ge y^{-1/4}$ and $\chi(A_\ell)\ge y^{-b^2}$. Then either:
		\begin{itemize}
			\item $G[A_\ell]$ contains a $(y^b,\chi)$-dense blockade of length at least $y^{-b}$ and mass at least $y^{b^2}\chi(A_\ell)$;
			
			\item there exist $B_\ell\subset A_\ell$ and $I\subset [\ell-1]$ with $\chi(B_\ell)\ge y^{b^2+1}\chi(A_\ell)$ and $\abs I\ge(1-2y^{1/4})\ell$, such that for all $j\in I$, there exists $A_j'\subset A_j$ complete to $B_\ell$ with $\chi(A_j')\ge (1-3y^{1/2})\chi(A_j)$; or
			
			\item there exist $j\in[\ell]$ and a complete blockade in $G[A_j]$ of length at least $y^{-1/4}$ and mass at least $y^{b^2+1}\chi(A_j)$.
		\end{itemize}
	\end{lemma}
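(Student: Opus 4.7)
The plan is to apply \cref{lem:waymid} to $G[A_\ell]$ with parameter $y^b$ (valid since $\chi(A_\ell) \geq y^{-b^2} = (y^b)^{-b}$), which yields an anticomplete or $(y^b,\chi)$-dense blockade $(B_1,\ldots,B_k)$ with $k \geq y^{-b}$ and $\chi(B_i) \geq y^{b^2}\chi(A_\ell)$ for each $i$. If this blockade is $(y^b,\chi)$-dense we are immediately in the first outcome of the lemma. So assume it is anticomplete; after discarding all but $k' := \lceil y^{-1/2} \rceil$ of the blocks (which we may, since $k \geq y^{-b} \geq y^{-1/2}$), we may suppose $k' \in [y^{-1/2}, 2y^{-1/2}]$.

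Next, I want to set up the hypothesis of \cref{lem:2dense2} with the vertex subsets $A_1, \ldots, A_{\ell-1}$ and $B_1, \ldots, B_{k'}$. For this I need each $G[B_i]$ to be both connected (for \cref{lem:mixed}) and anticonnected (for \cref{lem:2dense2}). To this end, for each $i \in [k']$ I replace $B_i$ by the vertex set $B_i'$ of a maximal component of $G[B_i]$; then $\chi(B_i') = \chi(B_i)$ and $G[B_i']$ is connected. Now apply \cref{lem:anticonn} to $G[B_i']$ with parameter $y^{1/2}$: either $G[B_i']$ contains a complete blockade of length $\geq y^{-1/4}$ with blocks of chromatic number $\geq y^{1/2}\chi(B_i') \geq y^{b^2 + 1}\chi(A_\ell)$ (which yields the third outcome of \cref{lem:2incre1} via $B_i' \subset A_\ell$), or there is $B_i'' \subset B_i'$ with $G[B_i'']$ anticonnected and $\chi(B_i'') \geq y^{1/2}\chi(B_i') \geq y^{b^2 + 1/2}\chi(A_\ell)$. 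Assume the latter holds for every $i \in [k']$.

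The subsets $B_1'', \ldots, B_{k'}''$ are then pairwise anticomplete (inherited from $B_1, \ldots, B_{k'}$), and each $G[B_i'']$ is anticonnected by construction. Moreover, for any $v \in A_1 \cup \cdots \cup A_{\ell-1}$ and any pair $i \ne j$, applying \cref{lem:mixed} to the connected anticomplete pair $(B_i', B_j')$ (note that $v \notin A_\ell \supset B_i' \cup B_j'$) shows $v$ is pure to one of $B_i', B_j'$, hence to one of $B_i'', B_j''$; so no such $v$ is mixed on two of $B_1'', \ldots, B_{k'}''$. Finally, since $(A_1, \ldots, A_\ell)$ is $(y,\chi)$-dense and each $B_i'' \subset A_\ell$, for $j < \ell$ and $v \in B_1'' \cup \cdots \cup B_{k'}''$ we have $\chi(A_j \setminus N_G(v)) < y\chi(A_j)$. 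Hence \cref{lem:2dense2} applies with $r_j := y\chi(A_j)$, yielding one of two outcomes. Its second outcome gives $j \in [\ell-1]$ with a complete blockade in $G[A_j]$ of length $\geq k'^{1/2} \geq y^{-1/4}$ whose blocks have chromatic number $\geq k'^{-1}\chi(A_j) \geq y^{1/2}\chi(A_j) \geq y^{b^2+1}\chi(A_j)$, the third outcome of \cref{lem:2incre1}. Its first outcome produces $i$ and $I \subset [\ell-1]$ with $|I| \geq (1 - k'^{-1/2})(\ell - 1) \geq (1 - 2y^{1/4})\ell$ (using $\ell \geq y^{-1/4}$ and $k' \geq y^{-1/2}$), and $A_j' \subset A_j$ complete to $B_i''$ with $\chi(A_j') \geq (1 - k'^{-1} - k'y)\chi(A_j) \geq (1 - 3y^{1/2})\chi(A_j)$; setting $B_\ell := B_i''$ gives the second outcome.

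The main obstacle is the choice of $k'$: it must be small enough that $k' y$ (the total chromatic loss to non-neighbourhoods summed across all $k'$ blocks) stays $O(y^{1/2})\chi(A_j)$, yet large enough that $k'^{-1}$ and $k'^{-1/2}$ are also $O(y^{1/2})$. Taking $k' \approx y^{-1/2}$ balances both demands simultaneously, which forces us to discard most of the $k \geq y^{-b}$ blocks given by \cref{lem:waymid}. A secondary technical nuisance is needing $G[B_i]$ to be both connected and anticonnected, resolved by first passing to a maximal component and then applying \cref{lem:anticonn} inside it.
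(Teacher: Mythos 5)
Your proposal is correct and follows essentially the same route as the paper's proof: apply \cref{lem:waymid} to $G[A_\ell]$ with $\eps=y^b$, pass to maximal (connected) components so that \cref{lem:mixed} gives the "not mixed on two blocks" condition, use \cref{lem:anticonn} to extract anticonnected subsets (or land in the third outcome), and then apply \cref{lem:2dense2} with roughly $y^{-1/2}$ blocks and $r_j=y\cdot\chi(A_j)$, exactly as the paper does. The only blemish is the step "$k'^{-1}\chi(A_j)\ge y^{1/2}\chi(A_j)$": since $k'=\ceil{y^{-1/2}}$ one only has $k'^{-1}\ge\tfrac12 y^{1/2}$ (say), but this is harmless because the required bound there is merely $y^{b^2+1}\chi(A_j)$.
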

	\begin{proof}
		Assume that the first and third outcomes do not hold.
		Then \cref{lem:waymid} with $\eps=y^b$ gives an anticomplete blockade $(D_1,\ldots,D_q)$ in $G[A_\ell]$ of length at least $y^{-b}$ and mass at least $y^{b^2}\chi(A_\ell)$;
		we may assume $G[D_i]$ is connected for all $i\in[q]$.
		Hence, by \cref{lem:mixed} and since $G$ is $P_5$-free, no vertex in $A_1\cup\cdots\cup A_{\ell-1}$ is mixed on at least two of $D_1,\ldots,D_q$.
		By \cref{lem:anticonn} and since the third outcome fails (with $j=\ell$, note that $y\cdot\chi(D_i)\ge y^{b^2+1}\chi(A_\ell)$),
		for each $i\in[q]$ there exists $B_i\subset D_i$ satisfying $G[B_i]$ is anticonnected and 
		$\chi(B_i)\ge y\cdot\chi(D_i)\ge y^{b^2+1}\chi(A_\ell)$.
		
		Let $k:=\ceil{y^{-1/2}}\le 2y^{-1/2}\le y^{-1}\le q$; then by \cref{lem:2dense2} with $\ell-1$ in place of $\ell$ and $r_j=y\cdot\chi(A_j)$ for all $j\in [\ell-1]$, either:
		\begin{itemize}
			\item there exist $i\in[k]$ and $I\subset[\ell-1]$ with $\abs I\ge(1-k^{-1/2})(\ell-1)$ such that for all $j\in I$, there exists $A_j'\subset A_j$ complete to $B_i$ with $\chi(A_j')\ge (1-k^{-1})\chi(A_j)-kr_j$; or
			
			\item there exist $j\in[\ell-1]$ and a complete blockade in $G[A_j]$ of length at least $y^{-1/4}$ and mass at least $k^{-1}\chi(A_j)\ge y\cdot\chi(A_j)$.
		\end{itemize}
		
		If the second bullet holds then the third outcome of the lemma holds, contrary to our assumption; hence the first bullet holds.
		Then the second outcome of the lemma holds because
		\begin{align*}
			(1-k^{-1/2})(\ell-1)&\ge (1-y^{1/4})\ell-1\ge(1-2y^{1/4})\ell,\quad\text{and}\\
			(1-k^{-1})\chi(A_j)-kr_j&\ge (1-y^{1/2})\chi(A_j)-2y^{1/2}\chi(A_j)=(1-3y^{1/2})\chi(A_j)\quad\text{for all $j\in I$}
		\end{align*} 
		(note that $y^{-1/2}\le k\le 2y^{-1/2}$). This proves \cref{lem:2incre1}. 
	\end{proof}
	
	Iterating \cref{lem:2incre1} gives the following rigorous version of our sketch at the start of this section.
	
	\begin{lemma}
		\label{lem:2incre2}
		Let $b\ge2$ given by \cref{lem:waymid}.
		Let $y\in(0,2^{-16}]$, and let $G$ be a $P_5$-free graph with a $(y,\chi)$-dense blockade $(A_1,\ldots,A_\ell)$ where $\ell \ge 8y^{-1/4}$ and $\chi(A_j)\ge y^{-2b^2}$ for all $j\in[\ell]$.
		Then either:
		\begin{itemize}
			\item there exists $j\in[\ell]$ such that $G[A_j]$ contains a $(y^{2b/3},\chi)$-dense blockade $(D_1,\ldots,D_k)$ with $k\ge y^{-2b/3}$ and $\chi(D_i)\ge y^{b^2}\chi(A_j)$ for all $i\in[k]$;
			
			\item there exists $J\subset[\ell]$ with $\abs J\ge y^{-1/8}$ such that for all $j\in J$, there exists $B_j\subset A_j$ satisfying $\chi(B_j)\ge y^{2b^2}\chi(A_j)$ for all $j\in J$ and $(B_j:j\in J)$ is a complete blockade; or
			
			\item there exist $j\in[\ell]$ and a complete blockade in $G[A_j]$ of length at least $y^{-1/8}$ and mass at least $y^{2b^2}\chi(A_j)$.
		\end{itemize}
	\end{lemma}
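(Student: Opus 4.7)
The plan is to iterate \cref{lem:2incre1} at most $s^* := \lceil y^{-1/8} \rceil$ times on a refined version of the given blockade, accumulating pieces of a complete blockade as I go. Throughout, I will maintain a triple $(I_s, (A_j^{(s)})_{j\in I_s}, \alpha_s)$ with $I_s \subset [\ell]$, $A_j^{(s)} \subset A_j$, and $\alpha_s \in (0,1]$, initialised at $I_0 = [\ell]$, $A_j^{(0)} = A_j$, $\alpha_0 = 1$, under the invariants that $\chi(A_j^{(s)}) \ge \alpha_s \chi(A_j)$ for all $j \in I_s$ and that $(A_j^{(s)} : j \in I_s)$, taken in the natural order of $I_s$, is $(y_s,\chi)$-dense with $y_s := y/\alpha_s$. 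At round $s$ I apply \cref{lem:2incre1} to this blockade (with top index $i_s := \max I_s$).

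If the first outcome of \cref{lem:2incre1} triggers at some round $s$, it produces a $(y_s^b,\chi)$-dense blockade of length $\ge y_s^{-b}$ in some $G[A_j^{(s)}]$, with each block of chromatic number $\ge y_s^{b^2}\chi(A_j^{(s)})$; using $y_s \le 2y$ and $\alpha_s \le 1$, a short calculation (the key identity being $y_s^{b^2}\chi(A_j^{(s)}) = y^{b^2}\alpha_s^{-(b^2-1)}\chi(A_j) \ge y^{b^2}\chi(A_j)$) verifies the first outcome of the current lemma. Likewise, the third outcome of \cref{lem:2incre1} yields the third outcome of the current lemma, since $y_s^{-1/4} \ge y^{-1/8}$ (as $y_s \le 2y$ and $y \le 2^{-16}$) and $y_s^{b^2+1}\chi(A_j^{(s)}) \ge y^{b^2+1}\chi(A_j) \ge y^{2b^2}\chi(A_j)$ (using $b \ge 2$). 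Otherwise, the second outcome furnishes $B_{i_s} \subset A_{i_s}^{(s)}$ with $\chi(B_{i_s}) \ge y_s^{b^2+1}\chi(A_{i_s}^{(s)}) \ge y^{2b^2}\chi(A_{i_s})$, a new index set $I_{s+1} \subset I_s \setminus \{i_s\}$ of size $\ge (1 - 2y_s^{1/4})|I_s|$, and refined blocks $A_j^{(s+1)} \subset A_j^{(s)}$ each complete to $B_{i_s}$ with $\chi(A_j^{(s+1)}) \ge (1 - 3y_s^{1/2})\chi(A_j^{(s)})$; I then set $\alpha_{s+1} := \alpha_s(1 - 3y_s^{1/2})$ and continue.

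If the second outcome triggers at every round $s < s^*$, the accumulated pieces $(B_{i_t} : t \in [s^*])$ will verify the second outcome of the current lemma with $J := \{i_1, \ldots, i_{s^*}\}$ and $B_{i_t}$ placed in slot $i_t$: pairwise completeness follows because for $t < u$ we have $B_{i_u} \subset A_{i_u}^{(u)} \subset A_{i_u}^{(t+1)}$, and $A_{i_u}^{(t+1)}$ is complete to $B_{i_t}$ by the round-$t$ construction; the cardinality is $|J| = s^* \ge y^{-1/8}$; and the chromatic lower bound $\chi(B_{i_t}) \ge y^{2b^2}\chi(A_{i_t})$ was established above.

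The main obstacle is a purely quantitative bookkeeping check: through every round $s \le s^*$, the hypotheses $y_s \le 2^{-8}$, $|I_s| \ge y_s^{-1/4}$, and $\chi(A_{i_s}^{(s)}) \ge y_s^{-b^2}$ of \cref{lem:2incre1} must remain valid. This reduces to showing $\alpha_s \ge 1/2$ and $|I_s|/\ell \ge 1/2$ for all $s \le s^*$, which follow respectively from $\alpha_s \ge (1 - 3\sqrt{2y})^s$ and $|I_s|/\ell \ge (1 - 2(2y)^{1/4})^s$ via Bernoulli's inequality, combined with $y \le 2^{-16}$, $s^* \le y^{-1/8}$, and $\ell \ge 8y^{-1/4}$. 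No new combinatorial ingredient beyond \cref{lem:2incre1} is required.
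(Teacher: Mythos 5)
Your argument is correct and is essentially the paper's own proof: the paper packages the same iteration as an extremal choice of a maximal set $J$ of accumulated complete-blockade pieces together with a surviving index set $I$ carrying the multiplicative losses $(1-4y^{1/4})^{\abs J}$ and $(1-6y^{1/2})^{\abs J}$, applies \cref{lem:2incre1} once with parameter $2y$ (your $y_s\le 2y$), and translates its three outcomes exactly as you do, with the middle one contradicting maximality in place of your "continue the iteration" step. One harmless numerical quibble: your claimed invariant $\abs{I_s}/\ell\ge\frac12$ does not quite follow from the stated Bernoulli estimate when $y$ is close to $2^{-16}$ (it yields roughly $\abs{I_s}\ge\ell/4$), but since the iteration only needs $\abs{I_s}\ge y_s^{-1/4}$ and you have $\ell\ge 8y^{-1/4}$, this slack absorbs the loss and the proof stands.
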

	\begin{proof}
		Let $J$ be a maximal subset of $[\ell]$ satisfying:
		\begin{itemize}
			\item for each $j\in J$, there exists $B_j\subset A_j$ with $\chi(B_j)\ge y^{2b^2}\chi(A_j)$ such that $(B_j:j\in J)$ is a complete blockade; and
			
			\item there exists $I\subset[\ell]$ such that $\abs I\ge (1-4y^{1/4})^{\abs J}\ell$, $i<j$ for all $i\in I$ and $j\in J$, and for each $i\in I$ there exists $C_i\subset A_i$ complete to $\bigcup_{j\in J}B_j$ with $\chi(C_i)\ge (1-6y^{1/2})^{\abs J}\chi(A_i)$.
		\end{itemize}
		(These conditions are clearly satisfied for $J$ empty.)
		If $\abs J\ge y^{-1/8}$ then the second outcome of the lemma holds and we are done; so we may assume $\abs J<y^{-1/8}$. Then since $y\le 2^{-16}$, it follows that
		\begin{align*}
			\abs I&\ge (1-4y^{1/4})^{\abs J}\ell\ge 4^{-4y^{1/4}\abs J}\ell\ge 4^{-4y^{1/8}}\ell\ge \ell/4\ge (2y)^{-1/4},\quad\text{and}\\
			\chi(C_i)&\ge(1-6y^{1/2})^{\abs J}\chi(A_i)\\
			&\ge 4^{-6y^{1/2}\abs J}\chi(A_i)\ge 2^{-12y^{1/4}}\chi(A_i)\ge\chi(A_i)/2\ge y^{-2b^2}/2\ge y^{-b^2}
			\quad\text{for all $i\in I$.}
		\end{align*}
		Thus $(C_i:i\in I)$ is $(2y,\chi)$-dense with the order respecting the natural order on $I$.
		Now, let $s$ be the maximum element of $I$;
		then by \cref{lem:2incre1} applied to $(C_i:i\in I)$ with $2y$ replacing $y$, either:
		\begin{itemize}
			\item there exists a $((2y)^b,\chi)$-dense blockade in $G[C_s]$ of length at least $(2y)^{-b}$ and mass at least $(2y)^{b^2}\chi(C_s)\ge y^{b^2}\chi(A_s)$;
			
			\item there exist $B_s\subset C_s$ and $I'\subset I$ with
			$$\chi(B_s)\ge (2y)^{b^2+1}\chi(C_s)\ge y^{2b^2}\chi(A_s)
			\quad\text{and}\quad
			\abs {I'}\ge(1-4y^{1/4})\abs I\ge (1-4y^{1/4})^{\abs J+1}\ell,$$
			such that for all $i\in I'$, there exists $C_i'\subset C_i$ complete to $B_s$ with
			$\chi(C_i')\ge (1-6y^{1/2})\chi(C_i)\ge (1-6y^{1/2})^{\abs J+1}\chi(A_i)$; or
			
			\item there exist $j\in I$ and a complete blockade in $G[C_j]$ of length at least $(2y)^{-1/4}$ and mass at least
			$(2y)^{b^2+1}\chi(C_j)\ge y^{2b^2}\chi(A_j)$.
		\end{itemize}
		
		If the first bullet holds then the first outcome of the lemma holds since $2y\le y^{2/3}$.
		If the second bullet holds then $J\cup\{s\}$ would violate the maximality of $J$.
		If the third bullet holds then the third outcome of the lemma holds because $(2y)^{-1/4}\ge y^{-1/8}$.
		This proves \cref{lem:2incre2}.
	\end{proof}
	
	We are now in a position to run chromatic density increment and prove \cref{lem:incre}, which we restate here for the reader's convenience:
	\begin{lemma}
		\label{lem:2incre}
		There exists $a\ge2$ such that for every $\eps\in(0,2^{-32}]$ and every $(\eps,\chi)$-dense $P_5$-free graph $G$ with $\chi(G)\ge\eps^{-a}$, there exist an integer $k\ge\eps^{-1/16}$ and a complete blockade in $G$ of length $k$ and mass at least $k^{-a}\chi(G)$.
	\end{lemma}
	\begin{proof}
		Let $b\ge2$ be given by \cref{lem:waymid}; we claim that $a:=16b^2+48b$ suffices.
		To this end, let $q:=\floor{\frac12\eps^{-1/2}}\ge2$, and let $(V_1,\ldots,V_q)$ be a partition of $V(G)$ with $\chi(V_i)\ge\chi(G)/(2q)$ for all $i\in[q]$; this is possible by \cref{lem:partition} since $\chi(G)\ge\eps^{-a}\ge q^2\ge 2q$.
		Because $2q\le \eps^{-1/2}$, the blockade $(V_1,\ldots,V_q)$ is $(\eps^{1/2},\chi)$-dense with $q=\floor{\frac12\eps^{-1/2}}\ge 8\eps^{-1/8}$ (since $\eps\le 2^{-32}$) and $\chi(V_i)\ge \eps^{1/2}\chi(G)$ for all $i\in[q]$.
		Hence, since $\chi(G)\ge\eps^{-a}\ge \eps^{-b/2}$, there exists $y\in[\chi(G)^{-1/b},\eps^{1/2}]$ minimal such that $G$ contains a $(y,\chi)$-dense blockade $(A_1,\ldots,A_\ell)$ of length at least $8y^{-1/4}$ and mass at least $y^{6b}\chi(G)$.
		
		\begin{claim}
			\label{claim:2incre}
			We may assume that $y\le \chi(G)^{-1/(2b^2+6b)}$.
		\end{claim}
		
		\begin{subproof}
			Suppose that $y\ge \chi(G)^{-1/(2b^2+6b)}$. Then $\chi(A_j)\ge y^{6b}\chi(G)\ge y^{-2b^2}$ for all $j\in[\ell]$. Thus, since $y\le \eps^{1/2}\le 2^{-16}$, \cref{lem:2incre2} implies that either:
			\begin{itemize}
				\item there exists $j\in[\ell]$ such that $G[A_j]$ contains a $(y^{2b/3},\chi)$-dense blockade of length at least $y^{-2b/3}$ and mass at least $y^{b^2}\chi(A_j)$;
				
				\item there exists $J\subset[\ell]$ with $\abs J\ge y^{-1/8}$ such that for all $j\in J$, there exists $B_j\subset A_j$ satisfying $\chi(B_j)\ge y^{2b^2}\chi(A_j)$ for all $j\in J$ and $(B_j:j\in J)$ is a complete blockade; or
				
				\item there exist $j\in[\ell]$ and a complete blockade in $G[A_j]$ of length at least $y^{-1/8}$ and mass at least $y^{2b^2}\chi(A_j)$.
			\end{itemize}
			
			If the first bullet holds, then since $y^{-2b/3}\ge 8y^{-b/6}=8(y^{2b/3})^{-1/4}$ and
			$$y^{b^2}\chi(A_j)\ge y^{b^2+6b}\chi(G)\ge y^{4b^2}\chi(G)=(y^{2b/3})^{6b}\chi(G),$$
			this would violate the minimality of $y$ because $\chi(G)^{-1/b}\le y^{2b/3}<y$ by our supposition and $b\ge2$.
			
			If the second bullet holds then the lemma holds since $\abs J\ge y^{-1/8}\ge \eps^{-1/16}$ and $$\chi(B_j)\ge y^{2b^2}\chi(A_j)\ge y^{2b^2+6b}\chi(G)\ge \abs J^{-16b^2-48b}\chi(G)\ge \abs J^{-a}\chi(G)\quad\text{for all $j\in J$.}$$
			
			If the third bullet holds then the lemma holds since $y^{-1/8}\ge \eps^{-1/16}$ and 
			$$y^{2b^2}\chi(A_j)\ge y^{2b^2+6b}\chi(G)= y^{a/8}\chi(G).$$
			This proves \cref{claim:2incre}.
		\end{subproof} 
		
		Now, let $p\in[\ell-1]$ be minimal such that $G$ has a clique $\{v_{p+1},\ldots,v_\ell\}$ with $v_i\in A_i$ for all $i\in[\ell]\setminus[p]$; this is clearly possible with $p=\ell-1$.
		If $\ell-p<y^{-1/4}$ then $p>\ell-y^{-1/4}\ge1$; and since $(A_1,\ldots,A_\ell)$ is $(y,\chi)$-dense, there would be $v_p\in A_p$ adjacent to all of $v_{p+1},\ldots,v_\ell$, contrary to the minimality of $p$.
		Hence $\ell-p\ge y^{-1/4}\ge \eps^{-1/16}$. Let $k:=\ell-p$; then \cref{claim:2incre} yields $k^a\ge y^{-a/4}\ge \chi(G)^{a/(8b^2+24b)}=\chi(G)^2\ge \chi(G)$.
		Therefore the lemma holds with $(\{v_{\ell+1-i}\}:i\in[k])$.
		This proves \cref{lem:2incre}.
	\end{proof}
	
	\section{Algorithmic aspects}
	\label{sec:algo}
	In this section we discuss the algorithmic aspects of our proof of \cref{thm:p5}.
	In view of~\cite{MR3548288,MR4174126}, given a class $\mac G$, if $\mac G$ is $\chi$-bounded with a $\chi$-binding function $f\colon \mab N\to \mab R_{\ge0}$ then it would be desirable to construct a deterministic algorithm that, for any input graph $G\in\mac G$, outputs a clique $K$ in $G$ and a proper colouring of $G$ with at most $g(\abs K)$ colours in polynomial time, for some function $g\colon\mab N\to\mab R_{\ge0}$ having similar order of magnitude as $f$ has. We remark that this is unlikely to be achievable already when $\mac G$ is the class of all $3$-colourable graphs, since weaker variants of the Unique Games Conjecture implies the {\sf NP}-hardness of colouring these graphs with constantly many colours~\cite{MR2538841,MR4118243}.
	On the positive side, there are known examples of $\chi$-bounded classes $\mac G$ for which such an algorithm exists:
	\begin{itemize}
		\item The `Gy\'arf\'as path' argument as in the proof of \cref{thm:gas} can be turned into a poly-time algorithm that, for any fixed $t\ge4$ and any input graph $G$, outputs either an induced copy of $P_t$, or a clique $K$ and a proper colouring of $G$ with at most $(t-2)^{\abs K-1}$ colours.
		
		\item Scott and Seymour~\cite{MR3548288} derived a poly-time algorithm that, for any input graph $G$, outputs either an induced cycle of length odd at least five, or a clique $K$ and a proper colouring of $G$ with at most $2^{2^{\abs K}}$ colours.
	\end{itemize}
	
	If the aforesaid $\chi$-binding function $f$ is polynomial, the algorithmic problem in question is conveniently divided into the following two subproblems, which also make sense for general classes $\mac G$ that are not necessarily $\chi$-bounded:
	\begin{problem}
		\label{prob:polyomega}
		Do there exist $c=c(\mac G)>0$ and a poly-time algorithm that, for any input graph $G\in \mac G$, outputs a clique of size at least $\omega(G)^c$?
	\end{problem}
	
	\begin{problem}
		\label{prob:polychi}
		Do there exist $d=d(\mac G)\ge1$ and a poly-time algorithm that, for any input graph $G\in\mac G$, outputs a proper colouring with at most $\chi(G)^d$ colours?
	\end{problem}
	
	\cref{prob:polyomega} strengthens the problem of $n^{1-\eps}$-approximating {\sc Maximum Clique} for any $\eps>0$, which is known to be {\sf NP}-hard if $\mac G$ is the class of all graphs~\cite{MR1687331,MR2403018}. If $\mac G$ is defined by forbidden induced subgraphs, this problem first appeared (as polynomially approximating {\sc Maximum Independent Set}) in the works of Bonnet,  Thomass\'e, Tran, and Watrigant~\cite{MR4140931} and of Dvo\v{r}\'ak, Feldmann, Rai, and Rz\c{a}\.{z}ewski~\cite{MR4568165}; for instance, they positively resolved \cref{prob:polyomega} for $\mac G$ defined by excluding a disjoint union of two complete graphs. Several hardness results related to {\sc Maximum Clique} for strict subclasses of $P_5$-free graphs are known:
	\begin{itemize}
		\item By~\cite[Corollary 29]{MR4140931}, if $\mac G$ is the class of graphs $G$ with $\alpha(G)\le 2$, then it remains {\sf NP}-hard to $n^{1/4-\eps}$-approximate {\sc Maximum Clique} on $\mac G$ for any $\eps>0$, assuming ${\sf NP}\ne{\sf BPP}$.
		
		\item By~\cite[Theorem 2]{MR4132895}, if $\mac G$ is the class of $2K_2$-free graphs (here $2K_2$ is the graph obtained from $P_5$ by removing the middle vertex), then {\sc Maximum Clique} remains {\sf W[1]}-hard on $\mac G$, which means the problem is unlikely to be fixed-parameter tractable.
	\end{itemize}
	
	Despite these hardness results, the chromatic density framework developed in this paper can be adapted to resolve \cref{prob:polyomega} in the positive for $P_5$-free graphs. Indeed, we have presented our proof of \cref{thm:p5} step-by-step to show that our argument is fully algorithmic in nature; and as discussed in \cref{subsec:meas}, the argument works for not just $\chi$ but any measure $\mu$. Thus, to ensure that our framework can derive a deterministic poly-time algorithm that polynomially approximates {\sc Maximum Clique} in the sense of \cref{prob:polyomega}, it remains to fulfil the following two objectives:
	\begin{itemize}
		\item First, an {\em oracle} that approximates $\mu$ from below by a factor of $1-\eps$ in time $\operatorname{poly}(\abs G,\eps^{-1})$, for any input $P_5$-free graph $G$ and any input error $\eps\in\mab Q_{>0}$. Such an approximation factor is needed for the proofs in \cref{sec:highchi,sec:1incre}; to explain, in these steps, for each such $G$ and each induced subgraph $F$ of $G$, one would need to extract a component $C$ of $F$ with $\mu(C)\ge (1-\abs G^{-1})\mu(F)$ in $\operatorname{poly}(\abs G)$ steps. For the other parts of the proof, any constant multiplicative approximation factor sufficiently close to $1$ suffices.
		
		While these tasks are {\sf NP}-hard for `standard' measures such as $\chi$ and $\omega$~\cite{MR4132895,MR1905637}, choosing $\mu$ to be {\em fractional chromatic number} (which we denote by $\chis$) turns out to be useful. To explain, Lokshtanov, Vatshelle, and Villanger~\cite{MR3376403} proved that {\sc Maximum Weight Independent Set} (MWIS) is poly-time solvable on $P_5$-free graphs, which allows the simple method of Multiplicative Weight Updates (MWU) (see for instance~\cite{MR2948502,MR1342948,MR2318722,young01}) to derive an algorithm that, for any input $P_5$-free graph $G$ and any input error $\eps\in\mab Q_{>0}$, computes a {\em fractional clique} of value at least $(1-\eps)\chis(G)$ in time $\operatorname{poly}(\abs G,\eps)$; see \cref{thm:mwu} for a proof. We remark that in fact computing $\chis$ {\em exactly} can be reduced to MWIS in polynomially many steps, but the only known reduction so far critically involves the ellipsoid method~\cite{MR625550}, which is not combinatorial in nature.
		
		\item Second, a {\em constructive} version of the \erh{} property of $P_5$, that is, a deterministic poly-time algorithm that outputs a clique or stable set of polynomial size in every input $P_5$-free graph. We remark that developing constructive versions for \erh{} results was previously investigated in~\cite{MR4140931}; however they allowed the corresponding algorithms to be {\em randomised}. In our case of $P_5$-free graphs, it is not hard to derandomise the simple probabilistic steps in~\cite{density7} to turn the proof there into a fully deterministic poly-time algorithm achieving our purpose; we omit the details.
	\end{itemize}
	
	Provided the above discussion, we deduce the following algorithmic version of \cref{thm:main} for fractional chromatic number.
	\begin{theorem}
		\label{thm:mainaglo}
		There exist $d\ge2$ and a deterministic algorithm that, for any input $P_5$-free graph $G$, outputs in time $\abs G^d$ one of the following:
		\begin{itemize}
			\item a complete pair $(X,Y)$ with $\chis(X)\ge y^d\chis(G)$ and $\chis(Y)\ge (1-y)\chis(G)$, for some rational number $y\in(0,\frac14)$; and
			
			\item a complete blockade $(B_1,\ldots,B_k)$ with $k\ge 2$ and $\chis(B_i)\ge k^{-d}\chis(G)$ for all $i\in[k]$.
		\end{itemize}
	\end{theorem}
	
	Via this result, it is not hard to turn the deduction of \cref{thm:p5} from \cref{thm:main} into the following.
	
	\begin{theorem}
		\label{thm:algop5}
		There exist $d\ge2$ and a deterministic algorithm that, for any input $P_5$-free graph $G$, outputs a clique of size at least $\chis(G)^{1/d}\ge\omega(G)^{1/d}$ in time $\abs G^d$.
	\end{theorem}
	In other words, we have described how to adapt (fractional) chromatic density to resolve \cref{prob:polyomega} when $\mac G$ is the class of $P_5$-free graphs.
	Thus, given that MWIS is known to be poly-time solvable on $P_6$-free graphs~\cite{MR4374260} and quasi-polynomial-time solvable on $P_t$-free graphs for all $t\ge7$~\cite{MR4232071,MR4537965}, we hope the chromatic density framework will be useful in resolving \cref{prob:polyomega} for $P_6$-free graphs and in obtaining a similar quasi-polynomial time algorithm for general $P_t$-free graphs.
	
	Even though \cref{thm:algop5,thm:p5} yield a poly-time algorithm computing a clique of size polynomial in the chromatic number on $P_5$-free graphs, our method in this paper does not seem to give a positive answer to \cref{prob:polychi} for these graphs. It would be interesting to decide this.
	
	\section{Concluding remarks}
	\label{sec:concl}
	There are some final points we would like to make.
	Firstly, even though we made no attempt to optimise the exponent $d$ of \cref{thm:p5} in this paper, our method appears unlikely to result in a quadratic bound in $\omega(G)$, which was conjectured by Choudum, Karthick, and Shalu~\cite{MR2292667}.
	If true, this would be optimal up to a logarithmic factor, since there are $n$-vertex graphs with no three-vertex stable set and with clique number $O(\sqrt{n\log n})$~\cite{MR1369063}.
	
	Secondly, the proof of \cref{thm:p5} can be extended to a more general type of trees. For integers $k,t\ge1$, the {\em $(k,t)$-broom} is the graph obtained from the $(k+1)$-vertex path by substituting a stable set of size $t$ into one of the two endpoints. Then every $(k+1)$-vertex path is the $(k,1)$-broom. We remark that the proof of \cref{thm:p5} can be adapted to show that the $(4,t)$-broom is poly-$\chi$-bounding for all $t\ge1$. To see this, for a $(4,t)$-broom-free graph $G$, one could shrink down each connected induced subgraph of $G$ with chromatic number at least $\chi(G)/\operatorname{poly}(\omega(G))$ (wherever this occurs in the proof) a little more to a connected subgraph with roughly the same chromatic number and with minimum degree at least $\omega(G)^t$ (to create a $t$-vertex stable set). Also, the completeness of pairs and blockades throughout this paper would be replaced by $(\omega(G)^{-2},\chi)$-denseness. One can now use \cite[Theorem 13.3]{2025thes} to deduce that disjoint unions of copies of the $(4,t)$-broom are also poly-$\chi$-bounding. We omit the details, which are just technical adjustments of the presented~materials.
	
	Lastly, the chromatic density framework introduced in this paper not only yields a resolution to the polynomial $\chi$-boundedness problem for excluding $P_5$, but also raises a number of open questions, many of which were already discussed in \cref{sec:sketch}. Let us conclude the paper with one we have not mentioned.
	As emphasised in \cref{subsec:chirdl}, the main notion in this paper is that of $\chi$-dense graphs. We extend this to hereditary graph classes as follows: we say that such a class $\mac G$ is {\em $\chi$-dense} if for every $\eps>0$, there exists $\delta=\delta(\mac G,\eps)>0$ such that every graph $G\in\mac G$ has an $(\eps,\chi)$-dense induced subgraph with chromatic number at least $\delta\cdot\chi(G)$.
	Observe that for every $(\frac13,\chi)$-dense graph $F$ with $\chi(F)\ge2$, every $v\in V(F)$ satisfies $\chi(N_F(v))\ge \frac13\chi(F)$.
	Thus, for any $\chi$-dense class $\mac G$, there exists $\delta>0$ such that every $G\in\mac G$ contains some $v\in V(G)$ with $\chi(N_G(v))\ge\delta\cdot \chi(G)$, which implies that $\mac G$ admits an exponential $\chi$-binding function $x\mapsto \delta^{1-x}$ via induction on the clique number.
	Hence, by the constructions in~\cite{MR4707561} that there are $\chi$-bounded classes with optimal $\chi$-binding function of arbitrary growth, there are $\chi$-bounded classes that are not $\chi$-dense.
	It would be interesting to know if every $\chi$-bounded class of graphs excluding a given tree is actually $\chi$-dense; as remarked in \cref{subsec:chirdl}, we do not know if this is true for $P_5$-free graphs.
	
	Another research direction related to $\chi$-dense classes is as follows. Because every complete graph is $(\eps,\chi)$-dense for all $\eps>0$, every $\chi$-bounded class admitting a linear $\chi$-binding function (such classes are also known as {\em linearly $\chi$-bounded} classes in the literature) is $\chi$-dense.
	On the other hand, it is not hard to check that in every $(\frac13,\chi)$-dense graph $G$, every two nonadjacent vertices (if exist) have common neighbourhood with chromatic number at least $\frac13\chi(G)$;
	and so if $G$ also has no induced four-cycle $C_4$ then $\omega(G)\ge\frac13\chi(G)$.
	Hence, all $\chi$-dense classes not containing $C_4$ are linearly $\chi$-bounded.
	One may wonder if this actually holds for $\chi$-bounded classes without $C_4$, as follows.
	
	\begin{conjecture}
		\label{conj:c4}
		Every $\chi$-bounded hereditary class $\mac G$ with $C_4\not\in\mac G$ is linearly $\chi$-bounded.
	\end{conjecture}
	This would confirm, in a strong form, a conjecture of Chudnovsky, Cook, Davies, and Oum~\cite{MR4951164} that suchs classes $\mac G$ are polynomially $\chi$-bounded.
	See~\cite{2026ptc4} for results and problems related to \cref{conj:c4}.

	\section*{Acknowledgements}
	We would like to thank Andr\'as Gy\'arf\'as for explaining the objective of~\cite[Problem 2.7]{MR951359} to us.
	We are also grateful to Maria Chudnovsky, Alex Scott, and Paul Seymour for helpful discussions and support throughout various stages of this project, and to Sang-il Oum for helpful comments on presentation.
	
	For the purpose of open access, the author has applied a CC BY public copyright licence to
	any author accepted manuscript arising from this submission.
	
	{\setstretch{1}
		\bibliographystyle{abbrv}
		\bibliography{pc5}
	}
	
	\appendix
	\section{$\chi$-dense graphs versus graphs of large minimum degree}
	Here we provide examples of $\chi$-dense graphs with low minimum degree, and graphs with high minimum degree but very far from being $\chi$-dense, as mentioned in \cref{subsec:chirdl}.
	\begin{proposition}
		\label{prop:dense}
		For every $\eps\in(0,1)$, there exists an $(\eps,\chi)$-dense graph $F$ with $\chi(F)\ge\eps^{-1}$ and minimum degree at most $\eps\abs F$,
		and there exists a graph $G$ with $\chi(G)\ge\eps^{-1}$ and minimum degree at least $(1-\eps)\abs G$ but not $(1-\eps,\chi)$-dense.
	\end{proposition}
	\begin{proof}
		To show the first statement, let $F$ be a graph with a partition $V(F)=X\cup Y$ where $X$ is a clique of size at least $\eps^{-1}$ and $Y$ is a stable set of size at least $\eps^{-1}\abs X$, such that $X$ is complete to $Y$.
		Then $\chi(F)=\abs X+1> \eps^{-1}$ and $F\setminus N_F[v]$ is stable for all $v\in V(F)$; thus $F$ is $(\eps,\chi)$-dense. Moreover $F$ has minimum degree $\abs X\le \eps\abs Y\le\eps\abs F$.
		
		To prove the second statement we need the following claim.
		\begin{claim}
			\label{claim:tri}
			For every $\theta\in(0,\frac13)$, there exists $n_0=n_0(\theta)\ge1$ such that for each $n\ge n_0$, there is an $n$-vertex triangle-free graph $G$ with $\frac{n^{\theta}}{6\ln n}\le \chi(G)\le \frac{n^{2\theta}}{\ln n}$.
		\end{claim}
		\begin{subproof}
			Consider a random graph on $n$ vertices where each edge is included independently with probability $p=n^{\theta-1}$.
			As shown by \L uczak~\cite{MR1112273}, there exists $n_1=n_1(\theta)\ge1$ such that whenever $n\ge n_1$, with probability at least $2/3$ this random graph has chromatic number at most
			\[\frac{pn}{\ln(pn)}=\frac{n^{\theta}}{\theta\ln n}\le \frac{n^{2\theta}}{\ln n}.\]
			
			On the other hand, as shown in~\cite[p. 43--44]{MR3524748} (with $\ell=3$), there exists $n_2=n_2(\theta)\ge1$ such that whenever $n\ge n_2$, with probability at least $2/3$ this random graph also has a triangle-free spanning subgraph with chromatic number at least $\frac{n^{\theta}}{6\ln n}$.
			Taking $n_0:=\max(n_1,n_2)$ proves \cref{claim:tri}.
		\end{subproof}
		
		Now, let $k:=\ceil{\eps^{-1}}$.
		Let $n\ge \max(n_0(1/4),n_0(1/9))$ be an integer satisfying $\frac{n^{1/9}}{6\ln n}\ge 4/\eps$ and $(\eps/6)\cdot n^{1/4}>(k-1)n^{2/9}$.
		By \cref{claim:tri}, there exist triangle-free $n$-vertex graphs $G_1,G_2$ such that $\frac{n^{1/4}}{6\ln n}\le\chi(G_1)$ and $\frac{n^{1/9}}{6\ln n}\le\chi(G_2)\le\frac{n^{2/9}}{\ln n}$.
		Let $G$ be the complete join of one copy of $G_1$ and $k-1$ copies of $G_2$. 
		Then $G$ has minimum degree at least $\abs G-n=(1-1/k)\abs G\ge (1-\eps)\abs G$.
		
		Moreover, the choice of $n$ yields $\chi(G)=\chi(G_1)+(k-1)\chi(G_2)\ge k\cdot\frac{n^{1/9}}{6\ln n}\ge 4k/\eps\ge 4\eps^{-2}$, and
		\begin{align*}
			\chi(G)\le \chi(G_1)+(k-1)n^{2/9}/\ln n
			<\chi(G_1)+(\eps/6)n^{1/4}/\ln n
			\le (1+\eps)\cdot\chi(G_1).
		\end{align*}
		Thus, since $G_1$ is triangle-free, every $v\in V(G_1)$ satisfies
		\[\chi(G\setminus N_G[v])\ge\chi(G_1\setminus N_{G_1}[v])\ge \chi(G_1)-2 \ge \frac1{1+\eps}\chi(G)-2\ge (1-\eps)\chi(G)\]
		where the last inequality holds because $$\frac1{1+\eps}\chi(G)-(1-\eps)\chi(G)=\frac{\eps^2}{1+\eps}\chi(G)\ge (\eps^2/2)\cdot \chi(G)\ge2.$$
		Therefore $G$ is not $(1-\eps,\chi)$-dense. This proves \cref{prop:dense}.
	\end{proof}
	
	\section{Approximating fractional chromatic number via MWIS and MWU}
	\label{sec:mwu}
	
	In this appendix we provide a simple procedure based on Multiplicative Weights Update (MWU) that reduces approximating fractional chromatic number $\chis$ to {\sc Maximum Weight Independent Set} (MWIS) in polynomially many steps. We believe this is well-known but have been unable to locate an appropriate reference in the literature.
	
	Let us recall relevant definitions. For a graph $G$, its {\em fractional chromatic number} $\chis(G)$ is the minimum $k\ge0$ such that there is a probability distribution $\mathbf{p}$ on the collection $\mac I_G$ of stable sets of $G$ satisfying $\sum_{S\ni v}\mathbf{p}(S)\ge\frac1k$ for all $v\in V(G)$. A {\em fractional clique} of $G$ is a function $f\colon V(G)\to\mab R_{\ge0}$ satisfying $\sum_{v\in S}f(v)\le 1$ for all $S\in\mac I_G$; and the {\em value} of $f$ is $\sum_{v\in V(G)}f(v)$. The {\em fractional clique number} $\omega^*(G)$ of $G$ is the maximum $\ell\ge0$ such that there is a fractional clique of $G$ of value at least $\ell$. By linear programming duality, $\chis(G)=\omega^*(G)$ and this is always a rational number. The MWIS problem asks, for any weighting $w\colon V(G)\to\mab Q_{\ge0}$, to return $S\in\mac I_G$ such that $\sum_{v\in S}w(v)$ is maximal.
	
	Our objective will be using MWIS as an oracle to compute a fractional clique of $G$ of value at least $(1-\eps)\chis(G)$, for any input error $\eps\in\mab Q_{>0}$.
	Our procedure below follows closely the standard MWU setup illustrated by Arora, Hazan, and Kale~\cite{MR2948502}:
	\begin{itemize}
		\item {\bf Initialisation:} Fix a rational $\eta\in(0,\frac12]$. For each $u\in V(G)$, associate the weight $w_1(u):=1$.
		
		\item {\bf For each $i\ge1$, do the following:}
		\begin{enumerate}
			\item call MWIS on $G$ with $w_i$ to extract $S_i\in \mac I_G$ such that $\sum_{v\in S_i}w_i(v)$ is maximum; then
			
			\item update $w_{i+1}(u):=(1-\eta)\cdot w_i(u)$ for all $u\in S_i$ and $w_{i+1}(u):=w_i(u)$ for all $u\in V(G)\setminus S_i$.
		\end{enumerate}
	\end{itemize}
	(Here, each vertex is a `decision', and the `cost vector' at step $i$ is the identity function on the maximum weight stable set $S_i$ returned by MWIS.)
	
	We analyse the above procedure via the following lemma.
	
	\begin{lemma}
		\label{lem:mwu}
		Let $k\in\mab Q_{>0}$, and let $G$ be a graph with $\abs G\ge2$. If $k\le \chis(G)$, then there is a positive integer $t\le \ceil{k\eta^{-2}\ln \abs G}$ such that
		$$\sum_{v\in S_t}w_t(v)\le\frac{1+2\eta}{k}\sum_{v\in V(G)}w_t(v).$$
		In other words, the function $f\colon V(G)\to\mab R_{\ge0}$ defined by
		$$f(u):=\frac{w_t(u)}{\sum_{v\in S_t}w_t(v)}$$
		is a fractional clique of $G$ of value at least $\frac1{1+2\eta}k\ge (1-2\eta)k$.
	\end{lemma}
	\begin{proof}
		Suppose not. Let $T:=\ceil{k\eta^{-2}\ln\abs G}$. For each $u\in V(G)$ and $i\in[T]$, let $m_i(u)$ be $1$ if $u\in S_i$ and $0$ if $u\nin S_i$, and let $q(u):=\sum_{i=1}^Tm_i(u)$. Also, let $\Phi_i:=\sum_{v\in V(G)}w_i(v)$ for every $i\in[T]$. Then by~\cite[Theorem 2.1]{MR2948502}, every $u\in V(G)$ satisfies
		\[\sum_{i=1}^T\sum_{v\in V(G)}\frac{m_i(v)\cdot w_i(v)}{\Phi_i}
		\le \sum_{i=1}^Tm_i(u)+\eta\sum_{i=1}^T\abs{m_i(u)}+\frac{\ln \abs G}{\eta}
		=(1+\eta)\cdot q(u)+\frac{\ln \abs G}{\eta}.\]
		Moreover, for all $i\in[T]$, our supposition implies that
		$$\sum_{v\in V(G)}m_i(v)\cdot w_i(v)=\sum_{v\in S_i}w_i(v)>\frac{1+2\eta}{k}\Phi_i,$$
		and so
		\begin{equation*}
			\frac{1+2\eta}{k}T<(1+\eta)\cdot q(u)+\frac{\ln \abs G}{\eta}\quad\text{for all $u\in V(G)$.}
		\end{equation*}
		
		Now, since $k\le\chis(G)=\omega^*(G)$, there is a fractional clique $g$ of $G$ of value at least $k$. Therefore, letting $\phi:=\sum_{u\in V(G)}g(u)$, we obtain
		\[\frac{1+2\eta}k T
		<\sum_{u\in V(G)}\frac{g(u)}{\phi}\left((1+\eta)\cdot q(u)+\frac{\ln\abs G}{\eta}\right)
		=\frac{1+\eta}{\phi}\sum_{u\in V(G)}{g(u)\cdot q(u)}+\frac{\ln \abs G}\eta.\]
		Thus, because $\sum_{u\in V(G)}g(u)\cdot q(u)=\sum_{i=1}^T\sum_{u\in S_i}g(u)\le T$ and $\phi\ge k$, we deduce that
		\[\frac{1+2\eta}{k}T<\frac{1+\eta}{k}T+\frac{\ln\abs G}{\eta},\]
		which yields $T<k\eta^{-2}\ln\abs G$, a contradiction. This proves \cref{lem:mwu}.
	\end{proof}
	
	The analysis of \cref{lem:mwu} yields the following approximation algorithm.
	
	\begin{lemma}
		\label{lem:mwu1}
		Let $k,\eta\in\mab Q_{>0}$ with $\eta\le\frac12$, and let $G$ be a graph. Then there is an algorithm that either outputs a fractional clique of $G$ of value at least $(1-2\eta)k$ or correctly concludes that $k>\chis(G)$. The algorithm makes only at most $\ceil{k\eta^{-2}\ln\abs G}$ calls to MWIS on $G$, with an additional processing time of $O(\abs G)$ per call.
	\end{lemma}
	\begin{proof}
		We may assume that $\abs G\ge2$. Our algorithm is as follows:
		\begin{itemize}
			\item For each $u\in V(G)$, associate the weight $w_1(u):=1$.
			
			\item {For $i=1,2,\ldots,\ceil{k\eta^{-2}\ln\abs G}$ in turn, do the following:}
			\begin{enumerate}
				\item call MWIS on $G$ with $w_i$ to extract $S_i\in \mac I_G$ such that $s_i:=\sum_{v\in S_i}w_i(v)$ is maximum; then
				
				\item if $s_i\le\frac{1+2\eta}{k}\sum_{v\in V(G)}w_i(v)$ then stop and output the fractional clique $f\colon V(G)\to\mab R_{\ge0}$ defined by $f(u):=w_i(u)/s_i$ for all $u\in V(G)$;
				and if $s_i>\frac{1+2\eta}k\sum_{v\in V(G)}w_i(v)$ then update $w_{i+1}(u):=(1-\eta)\cdot w_i(u)$ for all $u\in S_i$ and $w_{i+1}(u):=w_i(u)$ for all $u\in V(G)\setminus S_i$.
			\end{enumerate}
			
			\item If the algorithm does not stop midway during the above loop then conclude that $k>\chis(G)$.
		\end{itemize}
		
		\cref{lem:mwu} shows that this algorithm satisfies the conclusion of \cref{lem:mwu1}; note that the additional processing time of $O(\abs G)$ per call comes from the weights update.
	\end{proof}
	
	We now apply a simple divide and conquer search to approximate $\chis$ as follows.
	
	\begin{theorem}
		\label{thm:mwu}
		Let $\eps\in(0,1]$ be a rational number, and let $G$ be a graph. Then there is an algorithm that outputs a fractional clique in $G$ of value at least $(1-\eps)\chis(G)$ and makes only at most $O(\eps^{-3}\abs G^2\ln\abs G)$ calls to MWIS on $G$, with an additional processing time of $O(\abs G)$ per call.
	\end{theorem}
	\begin{proof}
		We may assume that $\abs G\ge1$ and $G$ is non-complete; and so $\abs G>\chis(G)\ge1$.
		Let $\ell:=\ceil{2\eps^{-1}\abs G}$, and let $\eta:=\eps/4$. Note that $\frac12\eps\ell\ge\abs G$.
		
		Our algorithm is as follows. For $j=1,2,\ldots,\ell$ in turn, we run the algorithm given by \cref{lem:mwu1} with $k=\frac12\eps j$; if this outputs a fractional clique of $G$ of value at least $(1-2\eta)k$ then proceed to $j+1$, and if it concludes that $k>\chis(G)$ then stop. The total number of calls to MWIS on $G$ is at most 
		$$\sum_{j=1}^{\ell}\ceil{(\eps/2)\cdot  j\cdot\eta^{-2}\ln\abs G}=O(\eps\ell^2\cdot\eps^{-2}\ln\abs G)=O(\eps^{-3}\abs G^2\ln\abs G)$$
		with an additional processing time of $O(\abs G)$ per call.
		
		Now, since $\frac12\eps\ell\ge\abs G>\chis(G)$, either the above algorithm stops at some step $j$ in the loop and outputs a fractional clique of $G$ of value at least $(1-2\eta)\cdot\frac12\eps\cdot(j-1)$, or it outputs a fractional clique of $G$ of value at least $(1-2\eta)\cdot\frac12\eps\cdot \ell$. Hence there exists $j\in[\ell]$ such that $\frac12\eps j>\chis(G)$ and the algorithm outputs a fractional clique of $G$ of value at least
		\[(1-2\eta)\cdot (\eps/2)\cdot (j-1)
		>(1-2\eta)\cdot (\chis(G)-\eps/2)
		=(1-\eps/2)(\chis(G)-\eps/2)
		\ge (1-\eps)\chis(G)\]
		where we used that $\eta=\eps/4$ and $\chis(G)\ge1$. This proves \cref{thm:mwu}.
	\end{proof}
\end{document}